\newcommand{\R}{\mathbb{R}}
\numberwithin{equation}{section}
\newtheorem{thm}{Theorem}[section]
\newtheorem{cor}[thm]{Corollary}
\newtheorem{lem}[thm]{Lemma}
\newtheorem{rem}[thm]{Remark}
\begin{document}
\title{Global gradient estimates \\ for nonlinear parabolic operators}

\author{Serena Dipierro${}^{(1)}$
\and
Zu Gao${}^{(2)}$
\and
Enrico Valdinoci${}^{(1)}$
}

\maketitle

{
\scriptsize \begin{center} (1) -- Department of Mathematics and Statistics,
University of Western Australia\\ 35 Stirling Highway, WA6009 Crawley (Australia)\\
\end{center}
\scriptsize \begin{center} (2) --
Department of Mathematics, School of Science,
Wuhan University of Technology\\
122 Luoshi Road, 430070 Hubei, Wuhan (China) \end{center}

\begin{center}
{\tt serena.dipierro@uwa.edu.au},
{\tt gaozu7@whut.edu.cn},
{\tt enrico.valdinoci@uwa.edu.au}
\end{center}
}
\bigskip\bigskip

\par
\noindent
\centerline{\today}
\begin{abstract}\noindent
We consider a parabolic equation driven by a nonlinear
diffusive operator and we obtain a gradient
estimate in the domain where the equation takes place.

This estimate depends on the structural constants
of the equation, on the geometry of the ambient space
and on the initial and boundary data.

As a byproduct, one easily obtains a universal interior
estimate, not depending on the parabolic data.

The setting taken into account includes sourcing terms
and general diffusion coefficients. The results are new,
to the best of our knowledge, even in the Euclidean setting,
though we treat here also
the case of a complete Riemannian manifold.
\\
\\
\\
\noindent \textbf{Keywords}: Parabolic equations on Riemannian manifolds, maximum principle, global gradient estimates.
\\
\\
\textbf{MSC 2010}: 35B09, 35B50, 35K05, 35R01.
\end{abstract}

\section{Introduction}

The heat equation was introduced almost
two centuries ago by
Joseph Fourier~\cite{MR2856180}.
In spite of its classical flavor, the investigation
of the main properties of the solution
is still an active field of research,
and several important
gradient estimates have been obtained in modern literature.
Also, given its importance in geometric evolution problems,
some of these results have been framed into the framework
of Riemannian manifolds.
Among the several results on this topic,
we recall the following universal bound for compact manifolds:

\begin{thm}[Theorem 1.1 in~\cite{MR1230276}]
Let~$\mathscr{M}$
be a compact Riemannian manifold
with
$\mathrm{Ric}(\mathscr{M})\geq-k$, for some~$k\ge0$.
Let~$ u=u(x,t)$ be a positive solution of~$u_t=\Delta u$ in~$\mathscr{M}\times (0,+\infty)$.
Assume that~$u\le M$ for some~$M>0$.

Then, for each~$(x,t)\in\mathscr{M}\times(0,+\infty)$,
\begin{equation} \label{HAM}
\frac{t |\nabla u(x,t)|^2}{u^2(x,t)}\le (1+2kt)\,\ln \frac{M}{u(x,t)}.\end{equation}
\end{thm}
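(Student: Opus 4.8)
The plan is to run a parabolic maximum principle on an auxiliary function built from~$\log u$, using the compactness of~$\mathscr{M}$ so that spatial maxima are attained. First I set~$f:=\log(u/M)$, which is smooth on~$\mathscr{M}\times(0,+\infty)$ by parabolic regularity and satisfies~$f\le 0$ since~$u\le M$; the equation~$u_t=\Delta u$ then becomes
\[ f_t=\Delta f+|\nabla f|^2, \]
and, since~$|\nabla f|=|\nabla u|/u$, the estimate~\eqref{HAM} is equivalent to~$t\,|\nabla f|^2\le(1+2kt)(-f)$ on~$\mathscr{M}\times(0,+\infty)$. To avoid imposing any hypothesis at~$t=0$, I would instead fix~$0<t_0<T$ and prove
\[ Q:=(t-t_0)\,|\nabla f|^2+\bigl(1+2k(t-t_0)\bigr)f\ \le\ 0 \qquad\text{on }\mathscr{M}\times[t_0,T], \]
and then let~$t_0\downarrow0$ for fixed~$(x,t)$ with~$T=t$; the prefactor~$t-t_0\to t$ makes this limit harmless, and at~$t=t_0$ one simply has~$Q=f\le 0$.

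Next I would derive a differential inequality for~$Q$. With the parabolic operator~$\mathcal{L}v:=v_t-\Delta v-2\langle\nabla f,\nabla v\rangle$ (smooth coefficients on the compact set~$\mathscr{M}\times[t_0,T]$), one gets immediately~$\mathcal{L}f=-|\nabla f|^2$; differentiating~$f_t=\Delta f+|\nabla f|^2$ and invoking the Bochner identity
\[ \Delta|\nabla f|^2=2|\nabla^2 f|^2+2\langle\nabla f,\nabla\Delta f\rangle+2\,\mathrm{Ric}(\nabla f,\nabla f) \]
together with~$\mathrm{Ric}\ge-k$ yields~$\mathcal{L}|\nabla f|^2\le-2|\nabla^2 f|^2+2k|\nabla f|^2$. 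Combining these two facts with the explicit time weights in~$Q$, the~$k$-terms cancel and what survives is
\[ \mathcal{L}Q\ \le\ -2(t-t_0)\,|\nabla^2 f|^2+2kf \qquad\text{on }\mathscr{M}\times[t_0,T]. \]

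Finally I would argue by contradiction. Suppose the maximum of~$Q$ over~$\mathscr{M}\times[t_0,T]$ equals~$\mu>0$ and is attained at~$(p,s)$. Since~$Q(\cdot,t_0)=f(\cdot,t_0)\le0$, necessarily~$s>t_0$; and since~$(1+2k(t-t_0))f\le0$, positivity of~$\mu$ forces~$(s-t_0)|\nabla f(p,s)|^2>0$, so~$\nabla f(p,s)\neq0$. As~$\mathscr{M}$ is closed, at~$(p,s)$ one has~$\nabla Q=0$, $\Delta Q\le0$ and~$\partial_t Q\ge0$, hence~$\mathcal{L}Q(p,s)\ge0$. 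The condition~$\nabla Q=0$ reads~$2(s-t_0)\,\nabla^2 f(\nabla f)=-\bigl(1+2k(s-t_0)\bigr)\nabla f$, so by Cauchy--Schwarz (using~$|\nabla^2 f(\nabla f)|\le|\nabla^2 f|\,|\nabla f|$ and~$|\nabla f|\neq0$)
\[ |\nabla^2 f(p,s)|^2\ \ge\ \Bigl(\tfrac{1}{2(s-t_0)}+k\Bigr)^2; \]
inserting this and~$f\le0$ into the inequality for~$\mathcal{L}Q$ gives
\[ 0\ \le\ \mathcal{L}Q(p,s)\ \le\ -2(s-t_0)\Bigl(\tfrac{1}{2(s-t_0)}+k\Bigr)^2+2kf(p,s)\ <\ 0, \]
a contradiction. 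Hence~$Q\le0$, and letting~$t_0\downarrow0$ yields~\eqref{HAM}.

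The Bochner bookkeeping is routine; the genuinely delicate points are the behaviour at~$t=0$ (handled by the~$t_0$-shift, the factor~$t-t_0$ being precisely what the~$1/t$ in~\eqref{HAM} demands) and the tuning of the weights~$t-t_0$ and~$1+2k(t-t_0)$, chosen so that at a positive maximum the constraint~$\nabla Q=0$ pins~$\nabla f$ as an eigendirection of~$\nabla^2 f$ with eigenvalue~$-\bigl(\tfrac{1}{2(t-t_0)}+k\bigr)$, which is exactly what makes the good term~$-2(t-t_0)|\nabla^2 f|^2$ dominate. Compactness of~$\mathscr{M}$ is used only to attain the maximum; on a general complete manifold one would localise with a spatial cutoff, and producing and absorbing the resulting error terms would be the main technical obstacle there.
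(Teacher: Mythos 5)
Your argument is correct. Note that this statement is quoted in the paper from Hamilton's work \cite{MR1230276} and is not proved there, so there is no internal proof to compare against; what you have written is essentially the classical maximum-principle proof of Hamilton's estimate, in the streamlined form based on $f=\log(u/M)$, the identity $f_t=\Delta f+|\nabla f|^2$, Bochner's formula, and the weighted quantity $Q=(t-t_0)|\nabla f|^2+(1+2k(t-t_0))f$. I checked the key computations: $\mathcal{L}f=-|\nabla f|^2$, $\mathcal{L}|\nabla f|^2\le-2|\nabla^2 f|^2+2k|\nabla f|^2$, and the cancellation of the $k|\nabla f|^2$ and $|\nabla f|^2$ terms giving $\mathcal{L}Q\le-2(t-t_0)|\nabla^2 f|^2+2kf$ are all right; the handling of a maximum attained at the final time $t=T$ (where only $\partial_t Q\ge0$ is available) is correctly accounted for, the condition $\nabla Q=0$ together with $\nabla f\neq0$ does yield $|\nabla^2 f|\ge\frac{1}{2(s-t_0)}+k$, and the $t_0\downarrow0$ limit legitimately removes any need for information at $t=0$. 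The only cosmetic remark is that the hypothesis of compactness makes the localisation issues you mention at the end moot here, and your closing comment about the noncompact case correctly identifies why results such as Theorem~\ref{HEAT1} require a genuinely different, localised argument.
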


This type of result is certainly striking
and also somewhat surprising,
since typically parabolic estimates aim at controlling positive solutions at a given time by values at a later time, in view of the diffusive character of the equation
(see e.g. the classical Harnack Inequality on page~89 of \cite{MR0181836}), while
Richard Hamilton's estimate in~\eqref{HAM}
is a pointwise estimate in space-time.

As a matter of fact, an estimate of this type
cannot hold in non-compact manifolds,
as the simple case of the fundamental solution in~$\R^n$
shows: namely, taking~$u(x,t):=
\frac{1}{ (4\pi t)^{\frac{n}2} }
\exp\left(-\frac{|x|^2}{4t}\right)$, one sees that
$$ \frac{t\,|\nabla u|^2}{u^2}=\frac{|x|^2}{4t}
$$
which does not permit a global bound as in~\eqref{HAM}.

With respect to this, several gradient estimates
have been obtained in non-compact manifolds
by considering ``interior estimates'' in space and time.
More specifically, if~$x_0\in\mathscr{M}$
and~$R>0$, one denotes the geodesic ball
of radius $R$ centered at~$x_0$ by $B(x_0,R)$.
Also, given $t_0\in\R$ and~$T>0$, we let
\begin{equation}\label{QRT}
Q_{R,T}:=B(x_0,R)\times [t_0-T,t_0].\end{equation}
In this setting, a fruitful topic of investigation consists
in obtaining local gradient estimates in~$Q_{R/2,T/2}$,
see especially the work~\cite{MR834612}
by Peter Li and Shing-Tung Yau in which the maximum principle and suitable cut-off functions
have been used to obtain a parabolic Harnack inequality on complete Riemannian manifolds. In this setting,
we recall also a celebrated result by
Philippe Souplet and Qi S. Zhang:

\begin{thm}[Theorem~1.1 in~\cite{MR2285258}]\label{HEAT1}
Let~$\mathscr{M}$
be a complete Riemannian manifold
with $\mathrm{Ric}(\mathscr{M})\geq-k$, for some~$k\ge0$.
Let~$ u=u(x,t)$ be a positive solution of~$u_t=\Delta u$ in~$Q_{R,T}$.
Assume that~$u\le M$ for some~$M>0$.

Then, for each~$(x,t)\in Q_{R/2,T/2}$,
\begin{equation*}
\frac{|\nabla u(x,t)|}{u(x,t)}\le
C\left( \frac1R+\frac1{\sqrt{T}}+\sqrt{k}\right)
\left(1+\ln\frac{M}{u(x,t)}\right),\end{equation*}
for a suitable positive dimensional constant~$C$.
\end{thm}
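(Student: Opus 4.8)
\noindent To prove Theorem~\ref{HEAT1}, the plan is to run a localized parabolic maximum principle, in the spirit of Li--Yau and Souplet--Zhang, on a carefully chosen auxiliary function. We may assume that $u$ is not identically constant in the space variable, since otherwise there is nothing to prove. As $0<u\le M$, the function $f:=\log(u/M)$ is well defined and nonpositive, and from $u_{t}=\Delta u$ one gets
\[\Delta f+|\nabla f|^{2}=f_{t}.\]
The heart of the matter is the choice of auxiliary quantity: we set $g:=1-f\ge1$, $h:=\log g\ge0$, and
\[w:=|\nabla h|^{2}=\frac{|\nabla f|^{2}}{(1-f)^{2}}=\frac{|\nabla u|^{2}}{u^{2}\,\bigl(1+\log(M/u)\bigr)^{2}}.\]
The denominator $1-f=1+\log(M/u)$ is exactly what will make the logarithmic factor enter linearly in the final estimate: if we can show that
\[w\le C(n)\Bigl(\frac1{R^{2}}+\frac1T+k\Bigr)\qquad\text{on }Q_{R/2,T/2},\]
then, since $|\nabla u|/u=|\nabla f|=(1-f)\sqrt{w}=\bigl(1+\log(M/u)\bigr)\sqrt{w}$, the conclusion follows by taking square roots.

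\smallskip\noindent
First I would derive a differential inequality for $w$. From the equation for $f$, a direct computation gives $\Delta h-h_{t}=(e^{h}-1)\,|\nabla h|^{2}=(g-1)\,w$. Differentiating this identity, using $\nabla g=g\,\nabla h$, and substituting into Bochner's formula
\[\Delta|\nabla h|^{2}=2|\nabla^{2}h|^{2}+2\langle\nabla h,\nabla\Delta h\rangle+2\,\mathrm{Ric}(\nabla h,\nabla h),\]
together with $\partial_{t}|\nabla h|^{2}=2\langle\nabla h,\nabla h_{t}\rangle$ and the hypothesis $\mathrm{Ric}(\mathscr{M})\ge-k$, one obtains (discarding the nonnegative Hessian term)
\[(\Delta-\partial_{t})\,w\ \ge\ 2g\,w^{2}-2k\,w+2(g-1)\,\langle\nabla h,\nabla w\rangle .\]
The crucial good term is $2g\,w^{2}$, with $g=1-f$ possibly large; it is designed to dominate the bad cross term $2(g-1)\langle\nabla h,\nabla w\rangle$, which carries the same large factor $g-1=\log(M/u)$.

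\smallskip\noindent
Next I would localize. Pick a cutoff $\psi(x,t)=\phi\bigl(d(x,x_{0})\bigr)\,\eta(t)$, where $\phi\colon[0,\infty)\to[0,1]$ is smooth with $\phi\equiv1$ on $[0,R/2]$, $\phi\equiv0$ on $[R,\infty)$, $\phi'\le0$, $|\phi'|^{2}\le C\phi/R^{2}$, $\phi''\ge-C/R^{2}$, and $\eta\colon[t_{0}-T,t_{0}]\to[0,1]$ is smooth with $\eta(t_{0}-T)=0$, $\eta\equiv1$ on $[t_{0}-T/2,t_{0}]$, $0\le\eta'\le C/T$. Set $F:=\psi\,w$ and let $(x_{1},t_{1})$ be a maximum point of the continuous function $F$ on the compact set $\overline{Q_{R,T}}$ (recall $\mathscr{M}$ is complete); if $F(x_{1},t_{1})=0$ there is nothing more to prove, so assume $F(x_{1},t_{1})>0$, whence $t_{1}>t_{0}-T$ and $d(x_{1},x_{0})<R$. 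At $(x_{1},t_{1})$ we have $\nabla F=0$, $\Delta F\le0$ and $\partial_{t}F\ge0$, hence $(\Delta-\partial_{t})F\le0$ there. Expanding $(\Delta-\partial_{t})(\psi w)$, using $\nabla w=-(w/\psi)\nabla\psi$ at the maximum, inserting the differential inequality for $w$, the bounds on the derivatives of $\phi$ and $\eta$, and the Laplacian comparison theorem $\Delta d(\cdot,x_{0})\le C(n)\bigl(\tfrac1{d}+\sqrt k\bigr)$ — valid away from $x_{0}$ and the cut locus, the cut locus being dealt with by Calabi's standard device of replacing $d$ with a smooth barrier from above (and noting that on $B(x_{0},R/2)$, where $\phi$ is constant, $\Delta d$ does not appear at all) — one reaches at $(x_{1},t_{1})$ an inequality of the form
\[2g\,\psi F\ \le\ C(n)\Bigl(\frac1{R^{2}}+\frac1T+k\Bigr)+\frac{C}{R}\,(g-1)\,\psi\sqrt{F}.\]
Here the choice of $w$ pays off: bounding $(g-1)\psi\sqrt{F}\le g^{-1/2}(g-1)\sqrt{g\,\psi F}$ and applying Young's inequality splits the last term into $\tfrac12\,g\,\psi F$, absorbed on the left, plus a term $C(g-1)^{2}/(R^{2}g)\le Cg/R^{2}$ which, after dividing through by $g\ge1$, is again of the permitted size. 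One concludes $\psi(x_{1},t_{1})\,w(x_{1},t_{1})\le C(n)(R^{-2}+T^{-1}+k)$, and since $\psi\equiv1$ on $Q_{R/2,T/2}$ and $F$ is maximal at $(x_{1},t_{1})$, the desired bound on $w$, and hence the theorem, follows.

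\smallskip\noindent
I expect two main obstacles. The first is conceptual but is resolved as soon as the auxiliary function $w=|\nabla\log(1-\log(u/M))|^{2}$ is written down: any of the ``naive'' choices such as $|\nabla\log u|^{2}$ or $|\nabla u|^{2}/u^{2}$ leads, through the same computation, to a final bound that is \emph{quadratic} in $\log(M/u)$, and it is precisely the denominator $1-\log(u/M)$ — which manufactures the extra good term $2g\,w^{2}$ — that restores the linear dependence claimed in the statement. The second is technical: the bookkeeping at the maximum point, the non-smoothness of $x\mapsto d(x,x_{0})$ on the cut locus (handled by Calabi's argument), and the verification that the maximum of $\psi w$ is attained at an interior point in space and away from the initial time — which is exactly why $\eta$ is required to vanish at $t=t_{0}-T$.
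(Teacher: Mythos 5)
Your argument is correct in substance, but it is not the route the paper takes: the paper never proves Theorem~\ref{HEAT1} directly. It is quoted from Souplet--Zhang and then recovered as a special case of the general machinery: choosing $F(s):=s$, $a:=1$, $H:=0$, $s_0:=M$, $\xi:=1$, $\eta:=1$ in \eqref{DEFG}--\eqref{CONDFOD} gives $G(u)=\ln(u/M)$, $\xi-G(u)=1+\ln\frac{M}{u}$, $\mu=k$, $\gamma=0$, so Corollary~\ref{noqwmefef004} yields $\frac{|\nabla u|}{u}\le C\left(\sqrt k+\frac1R+\frac1{\sqrt T}+\frac{\sqrt[4]{k_+}}{\sqrt R}\right)\left(1+\ln\frac Mu\right)$ in $Q_{R/2,T/2}$, and the last term is absorbed via $\frac{\sqrt[4]{k}}{\sqrt R}\le\frac12\left(\sqrt k+\frac1R\right)$. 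You instead reprove the statement from scratch by the original Souplet--Zhang scheme: the auxiliary function $w=|\nabla\log(1+\log(M/u))|^2$, the identity $\Delta h-h_t=(g-1)w$, Bochner with $\mathrm{Ric}\ge-k$ giving $(\Delta-\partial_t)w\ge 2gw^2-2kw+2(g-1)\langle\nabla h,\nabla w\rangle$ (which I checked and is correct), then a space-time cutoff, the maximum principle with Calabi's trick, and Young's inequality, the factor $g=1+\log(M/u)$ in the good term $2gw^2$ being exactly what turns the naive quadratic logarithmic dependence into the linear one. This is the same mechanism that, vastly generalized, underlies the paper's Lemma~\ref{Dl1} and the localization in Section~\ref{KJSMD93sad45} (your $w$ is the paper's $w$ in \eqref{DEFw} with $F(s)=s$, $\xi=1$); your route is shorter and self-contained for this special case, while the paper's route buys the nonlinear diffusion $a(x,t,u)\Delta(F(u))$, the source term $H$, and a global rather than merely interior estimate. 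One bookkeeping slip to repair: at the maximum point the natural inequality, obtained by dividing the max-point relation by $w$ (not by $\psi w$), reads $2gF\le C(n)\left(\frac1{R^2}+\frac1T+k\right)+\frac CR(g-1)\sqrt F$ with $F=\psi w$; the version you display carries a spurious extra factor $\psi$ on the left and in the cross term, and from that weaker statement you would only control $\psi F$, not $F$, at $(x_1,t_1)$. With the extra $\psi$ removed, your absorption $\frac CR(g-1)\sqrt F\le\frac12 gF+\frac{C(g-1)^2}{gR^2}\le\frac12 gF+\frac{Cg}{R^2}$ and the division by $g\ge1$ give $F(x_1,t_1)\le C(n)\left(\frac1{R^2}+\frac1T+k\right)$ exactly as you claim, and the proof closes as you describe.
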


This result has been extended
by Li Ma, Lin Zhao and Xianfa Song~\cite{MR2392508}
to the case of nonlinear equations, obtaining
the following structural result:

\begin{thm}[Theorem 7 in \cite{MR2392508}]\label{HEAT2}
Let~$\mathscr{M}$
be a complete Riemannian manifold of
dimension~$n$
with
$\mathrm{Ric}(\mathscr{M})\geq-k$, for some~$k\ge0$.
Let~$ u=u(x,t)$ be a positive solution of~$u_t=\Delta (F(u))$ in~$Q_{R,T}$, with~$F\in C^2(0,+\infty)$.
Assume that~$u\le M$ for some~$M>0$
and that
\begin{equation}\label{CONDMA0}
{\mbox{$F'(s)\in(0,K]$ for every~$s\in(0,M]$.}}\end{equation}
Let also~$G:(0,+\infty)\to\R$ be such that~$G'(s)=F'(s)/s$
for all~$s\in(0,M]$ and suppose that
\begin{equation}\label{CONDMA}
\begin{split}&
1-\frac{\sqrt{n}\,|F''(s)|\,s}{F'(s)}\geq\kappa>0,\\&
\xi-G(s)\geq\eta>0\\{\mbox{and }}\qquad&
2F'(s)-\frac{\sqrt{n}|F''(s)|s}{F'(s)}\big(\xi-G(s)\big)>0
\end{split}
\end{equation}
for every~$s\in(0,M]$, for suitable constants~$\kappa$, $\eta$, $\xi$.

Then,
there exists~$C>0$, depending only on~$n$,
$K$, $\kappa$ and $\eta$ such that,
for each~$(x,t)\in Q_{R/2,T/2}$,
\begin{equation}\label{Liyh2f9maliummorjfy}
\frac{|\nabla G(u(x,t))|}{\xi-G(u(x,t))}
\le C\left(\frac1R+\frac1{\sqrt{T}}+\sqrt{k}\right).
\end{equation}
\end{thm}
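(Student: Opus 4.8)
The natural approach is the one introduced by Li and Yau and refined by Souplet and Zhang: a Bochner-type identity for the gradient of a well-chosen auxiliary function, a localizing cut-off, and the parabolic maximum principle, adapted here to the quasilinear operator $\Delta(F(\cdot))$. The first step is a change of unknown. Setting $v:=\xi-G(u)$, one has $v\ge\eta>0$ by~\eqref{CONDMA}, while $\nabla G(u)=-\nabla v$ and $\xi-G(u)=v$, so that the target estimate~\eqref{Liyh2f9maliummorjfy} is exactly an upper bound for $|\nabla v|/v$. Using $u_t=\Delta(F(u))=F'(u)\,\Delta u+F''(u)\,|\nabla u|^2$, the identity $\nabla v=-\frac{F'(u)}{u}\,\nabla u$ and $G'(s)=F'(s)/s$, a short computation shows that the $F''$-contributions cancel and that $v$ solves
\begin{equation*}
v_t=F'(u)\,\Delta v-|\nabla v|^2\qquad\text{in }Q_{R,T}.
\end{equation*}
In the linear case $F(s)=s$ this is the equation satisfied by $\xi-\ln u$, recovering the setting of Theorem~\ref{HEAT1}. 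The quantity to be controlled is therefore $w:=|\nabla v|^2/v^2$.

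The core of the argument is a differential inequality for $w$. Writing $a:=F'(u)\in(0,K]$ and noting that $\nabla a=-\frac{u\,F''(u)}{F'(u)}\,\nabla v$, I would compute $(\partial_t-a\,\Delta)w$ by means of the Bochner formula $\frac12\,\Delta|\nabla v|^2=|\nabla^2 v|^2+\langle\nabla v,\nabla\Delta v\rangle+\mathrm{Ric}(\nabla v,\nabla v)$, the curvature bound $\mathrm{Ric}(\mathscr{M})\ge-k$, and the equation for $v$ above. The cross terms generated by the variable coefficient $a$ (those carrying $F''$) are estimated through the Cauchy--Schwarz inequality $(\Delta v)^2\le n\,|\nabla^2 v|^2$ --- which is precisely where the factor $\sqrt n$ enters --- together with Young's inequality, and the three requirements in~\eqref{CONDMA} are exactly what makes the absorption possible: the first, $1-\frac{\sqrt n\,|F''(s)|\,s}{F'(s)}\ge\kappa$, preserves the coercivity of the Hessian term; the third, $2F'(s)-\frac{\sqrt n\,|F''(s)|\,s}{F'(s)}\,\big(\xi-G(s)\big)>0$, keeps a further coefficient of the right sign; and the second, $\xi-G(s)\ge\eta$, keeps $v$ bounded away from zero. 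The outcome should be a differential inequality of the schematic form
\begin{equation*}
(\partial_t-a\,\Delta)\,w+\langle X,\nabla w\rangle\;\le\;-c_0\,w^2+c_1\,k\,w,
\end{equation*}
valid up to additional terms that, at the maximum point considered below, are dominated by the ones displayed, with $c_0>0$, with $X$ a vector field of size comparable with $\sqrt w$, and with $c_0$, $c_1$ and the implicit constant in $|X|$ depending only on $n$, $K$, $\kappa$, $\eta$.

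It remains to localize. Following Souplet and Zhang, I would fix a smooth cut-off $\psi$ on $Q_{R,T}$ with $\psi\equiv1$ on $Q_{R/2,T/2}$, vanishing for $t=t_0-T$ and on $\partial B(x_0,R)$, and satisfying
\begin{equation*}
\frac{|\nabla\psi|^2}{\psi}\le\frac{C}{R^2},\qquad|\Delta\psi|\le\frac{C(1+\sqrt k\,R)}{R^2},\qquad|\partial_t\psi|\le\frac{C}{T},
\end{equation*}
the bound on $\Delta\psi$ resting on the Laplacian comparison theorem (and hence on the Ricci lower bound). Let $(x_1,t_1)\in Q_{R,T}$ be a point where $\psi w$ attains its maximum; if this maximum is positive then necessarily $t_1>t_0-T$ and $x_1$ is interior, so there $\nabla(\psi w)=0$, $\Delta(\psi w)\le0$ and $\partial_t(\psi w)\ge0$. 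Inserting these relations and $\nabla w=-\frac{w}{\psi}\,\nabla\psi$ into the inequality for $w$, multiplying by $\psi$, using the cut-off bounds and $a\le K$, and absorbing the resulting $(\psi w)^{3/2}$ and lower-order terms by Young's inequality (using $\xi-G\ge\eta$ where a lower bound on $v$ is needed), one obtains $(\psi w)(x_1,t_1)\le C\big(R^{-2}+T^{-1}+k\big)$ with $C=C(n,K,\kappa,\eta)$. Since $\psi\equiv1$ on $Q_{R/2,T/2}$, this bounds $w$ there, and taking square roots yields~\eqref{Liyh2f9maliummorjfy}, because $w=|\nabla G(u)|^2/(\xi-G(u))^2$.

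The step I expect to be the main obstacle is the second one: organizing the Bochner computation for the variable-coefficient operator $\partial_t-F'(u)\Delta$ so that the many error terms carrying $F'$ and $F''$ are all dominated, and checking that the hypotheses~\eqref{CONDMA0}--\eqref{CONDMA} are exactly sharp for this purpose --- in particular that the coercive term $-c_0\,w^2$ survives with a constant $c_0>0$ that is \emph{uniform} in $s\in(0,M]$, which is ultimately what forces $C$ to depend only on $n$, $K$, $\kappa$ and $\eta$.
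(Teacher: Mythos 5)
Your overall strategy -- pass to $v=\xi-G(u)$, which indeed satisfies $v_t=F'(u)\,\Delta v-|\nabla v|^2$, study $w=|\nabla v|^2/v^2=|\nabla G(u)|^2/(\xi-G(u))^2$ via Bochner's formula, localize with a Souplet--Zhang type cut-off and conclude by the maximum principle -- is essentially the same route as the paper's, which proves the general differential inequality of Lemma~\ref{Dl1} and the localized estimates, and then obtains the present statement as the special case $a\equiv1$, $H\equiv0$ of Corollary~\ref{noqwmefef004} (where $\mu\le kK$ and $\gamma=0$). Your derivation of the equation for $v$ is correct.

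The genuine gap is in your schematic inequality $(\partial_t-a\Delta)w+\langle X,\nabla w\rangle\le -c_0\,w^2+c_1\,k\,w$ with $|X|\le C\sqrt{w}$ and all constants depending only on $n,K,\kappa,\eta$. This cannot be achieved: the drift generated by the first-order term $-|\nabla v|^2$ in the equation for $v$ is, up to bounded coefficients, a multiple of $\nabla v$ itself, whose size is $v\sqrt{w}$, and $v=\xi-G(u)$ is bounded \emph{below} by $\eta$ but not above -- already for $F(s)=s$ one has $v=1+\ln(M/u)$, unbounded as $u\to0$ (this is precisely the factor appearing on the right-hand side of Theorem~\ref{HEAT1}). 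Correspondingly the coercive term is not $-c_0w^2$ but $-c_0\,v\,w^2$; compare the term $a\kappa(\xi-g)w^2$ in \eqref{LU190h010}. If you drop the weight $v$ from the good term (using $v\ge\eta$) while the drift still carries it, then at the maximum point the Young absorption of $C\,v\,w^{3/2}|\nabla\psi|$ against $c_0w^2\psi$ leaves an error of order $v^4|\nabla\psi|^4/\psi^3\sim (\sup v)^4/R^4$, so your final constant would depend on $\sup_{Q_{R,T}}\big(\xi-G(u)\big)$, i.e.\ on $\inf u$, which is not admissible and would not even recover Theorem~\ref{HEAT1}. The repair is the standard bookkeeping the paper carries out: keep the weight $\xi-G(u)$ on the coercive term, perform every Cauchy--Schwarz/Young absorption with that same weight so that each error term carries exactly one factor of $\xi-G(u)$ (as in \eqref{DD20}, \eqref{DD23} and \eqref{last-A}), and divide the resulting inequality \eqref{DD21} by $\xi-G(u)$ only at the end; then all constants depend solely on $n$, $K$ (through $\Gamma=K/\eta$), $\kappa$ and $\eta$. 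With that correction your argument closes; as written, the absorption step fails to give a constant depending only on the stated parameters.
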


As detailed in Remark~8 in~\cite{MR2392508} (see also Appendix~\ref{APP-90}
here),
Theorem~\ref{HEAT2}
includes Theorem~\ref{HEAT1} as a special case, when~$F(s)=s$. Moreover, while conditions~\eqref{CONDMA} may look rather technical at a first glance, they are in fact
sufficiently general to treat several important nonlinear models such as the porous
medium equation~$u_t=\Delta u^p$ (see e.g.~\cite{MR2286292},
and also~\cite{MR2383484, MR3427985, MR3658720, MR3735744}
for the case of Riemannian manifolds)
with
\begin{equation}\label{RANGEP}
p\in\left(1-\frac1{\sqrt{n}},1\right].
\end{equation}
In this case, Theorem~\ref{HEAT2}
entails the following statement:

\begin{cor}[Corollary 9 in \cite{MR2392508}]\label{99}
Let~$\mathscr{M}$
be a complete Riemannian manifold of dimension~$2$
and~$3$,
with~$\mathrm{Ric}(\mathscr{M})\geq-k$, for some~$k\ge0$.
Let~$ u=u(x,t)$ be a positive solution of~$u_t=\Delta u^p$ in~$Q_{R,T}$,
with~$p$ as in~\eqref{RANGEP}.
Assume that~$u\le M$ for some~$M>0$.

Then,
there exists~$C>0$, depending only on~$n$ and~$p$ such that,
for each~$(x,t)\in Q_{R/2,T/2}$,
\begin{equation}\label{TIm94}
\frac{|\nabla u(x,t)|}{ u(x,t) }
\le C\left(\frac1R+\frac{M^{\frac{1-p}2}}{\sqrt{T}}+\sqrt{k}\right).
\end{equation}
\end{cor}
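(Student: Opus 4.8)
The plan is to deduce Corollary~\ref{99} from Theorem~\ref{HEAT2} with the choice $F(s):=s^p$, after a preliminary parabolic rescaling that normalizes $\sup u$ to~$1$; this rescaling is precisely what will produce the factor $M^{\frac{1-p}2}$ in~\eqref{TIm94}.

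For the rescaling, given a positive solution $u\le M$ of $u_t=\Delta u^p$ on $Q_{R,T}=B(x_0,R)\times[t_0-T,t_0]$, I would set
\begin{equation*}
w(x,\tau):=\frac1M\,u\bigl(x,\,M^{1-p}\tau\bigr),\qquad \tau\in\bigl[M^{p-1}(t_0-T),\,M^{p-1}t_0\bigr].
\end{equation*}
Since the Laplacian acts only in the spatial variable, a direct computation gives $w_\tau=\Delta w^p$, and clearly $w\le1$, while the manifold and its metric (hence the geodesic balls and the Ricci lower bound $-k$) are unchanged. Thus $w$ is a positive solution of the same porous medium equation on $B(x_0,R)\times[\tau_0-T',\tau_0]$, with $T':=M^{p-1}T$ and $\sup w\le1$.

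Next I would verify the structural hypotheses of Theorem~\ref{HEAT2} for $F(s)=s^p$ on $(0,1]$. Here $F'(s)=p\,s^{p-1}$ and $F''(s)=p(p-1)s^{p-2}$, so $\frac{\sqrt n\,|F''(s)|\,s}{F'(s)}=\sqrt n\,(1-p)$ is constant; the first condition in~\eqref{CONDMA} then reads $1-\sqrt n(1-p)\ge\kappa>0$, which is exactly the admissible range~\eqref{RANGEP}, and I would set $\kappa:=1-\sqrt n(1-p)$. Choosing $G(s):=\frac{p}{p-1}\,s^{p-1}$, so that $G'=F'/s$, and the constant $\xi:=0$, one gets $\xi-G(s)=\frac{p}{1-p}\,s^{p-1}\ge\frac{p}{1-p}=:\eta>0$ on $(0,1]$, which is the second condition, while the third becomes $2F'(s)-\sqrt n(1-p)\,(\xi-G(s))=(2-\sqrt n)\,p\,s^{p-1}$, positive precisely because $\mathscr M$ has dimension $2$ or~$3$. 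Thus $\kappa$ and $\eta$ depend only on $n$ and $p$. The genuinely delicate point is condition~\eqref{CONDMA0}: $F'(s)=p\,s^{p-1}$ is not bounded as $s\to0^+$, so $K=+\infty$ at first sight; I would resolve this either by inspecting the proof of Theorem~\ref{HEAT2} to see that only the scale-invariant ratio $\frac{\sqrt n|F''|s}{F'}$ together with $\kappa$ and $\eta$ enter the final constant, or by approximating $F$ near the origin and working on the region $\{w>0\}$ — and this is where I expect the main work to lie.

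Finally, Theorem~\ref{HEAT2} applied to $w$ on $Q_{R,T'}$ gives, for each point of $Q_{R/2,T'/2}$,
\begin{equation*}
\frac{|\nabla G(w)|}{\xi-G(w)}\le C\Bigl(\frac1R+\frac1{\sqrt{T'}}+\sqrt k\Bigr),
\end{equation*}
with $C$ depending only on $n$ and $p$. Since $|\nabla G(w)|=|G'(w)|\,|\nabla w|=p\,w^{p-2}|\nabla w|$ and $\xi-G(w)=\frac{p}{1-p}\,w^{p-1}$, the left-hand side equals $(1-p)\,\frac{|\nabla w|}{w}$, so that the possible blow-up at $w=0$ cancels; hence $\frac{|\nabla w|}{w}\le\frac{C}{1-p}\bigl(\frac1R+\frac1{\sqrt{T'}}+\sqrt k\bigr)$. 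Undoing the rescaling, $\frac{|\nabla w|}{w}=\frac{|\nabla u|}{u}$, the cylinder $Q_{R/2,T'/2}$ corresponds to $Q_{R/2,T/2}$, and $\frac1{\sqrt{T'}}=\frac{M^{\frac{1-p}2}}{\sqrt T}$, which is exactly~\eqref{TIm94}. Besides the unboundedness of $F'$ at the origin, the remaining point requiring care is the coupled verification of the three conditions in~\eqref{CONDMA} with one and the same choice of $\xi$, which is what simultaneously forces the range~\eqref{RANGEP} and the restriction to manifolds of dimension $2$ and~$3$.
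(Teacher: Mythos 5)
Your rescaling, your verification of the three conditions in \eqref{CONDMA}, and the final cancellation $\frac{|\nabla G(w)|}{\xi-G(w)}=(1-p)\frac{|\nabla w|}{w}$ are all fine, but the step you yourself flag as ``the genuinely delicate point'' is a genuine gap rather than a technicality. Theorem~\ref{HEAT2} simply does not apply to $F(s)=s^{p}$ with $p<1$: hypothesis~\eqref{CONDMA0} requires $F'(s)\le K$ on all of $(0,M]$, and the constant in~\eqref{Liyh2f9maliummorjfy} is allowed to depend on $K$; since no positive lower bound on $u$ (hence on your rescaled $w$) is assumed, here $K=+\infty$ and the statement of Theorem~\ref{HEAT2} gives nothing. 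Neither of your two proposed remedies is actually carried out: claiming that ``only the scale-invariant ratio, $\kappa$ and $\eta$ enter the final constant'' is precisely the assertion that the theorem holds under weaker hypotheses, which is the nontrivial content that must be proved, and an approximation of $F$ near the origin changes the equation and still requires a constant that stays uniform as the approximation degenerates. This is exactly the point the paper makes in the introduction: Corollary~\ref{99} is \emph{not} a direct consequence of the statement of Theorem~7 in~\cite{MR2392508}; the original derivation there proceeds by a modification of the proof of that theorem, not by invoking its conclusion.

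The paper's own argument (Appendix~\ref{APP-90}) repairs precisely this defect, and it is the route your proof should take. It replaces~\eqref{CONDMA0} by the weaker ratio bound~\eqref{SUPER}, $F'(s)/(\xi-G(s))\le\Gamma$, which for the porous medium nonlinearity is controlled purely in terms of $p$: with the truncated $G$ of~\eqref{DEFG} based at a finite $s_0\ge 2^{\frac{1}{1-p}}$ one gets $\Gamma=2(1-p)$ (and with your antiderivative the ratio is identically $1-p$). One then applies Corollary~\ref{noqwmefef004}, whose constant depends only on $n$, $\Gamma$, $\kappa$, $a_0$, $\eta$ --- all functions of $n$ and $p$ here --- and finally sends $s_0\to+\infty$ to remove the truncation and obtain~\eqref{TIm94}. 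Your rescaling, your verification of \eqref{kappapi}, \eqref{XIPOS}, \eqref{CONDFOD}, and your concluding algebra then go through essentially verbatim; as written, however, the key quantitative input of your argument is missing.
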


For the sake of precision, we observe that, strictly speaking,
in the original formulation given in~\cite{MR2392508},
Corollary~9 in~\cite{MR2392508} is not a direct consequence
of Theorem~7 in~\cite{MR2392508}, since the proof of the corollary presented
there does not rely merely on the statement of the theorem but rather
on a skillful modification of its proof: nevertheless,
it is possible to deduce the corollary directly from the results
that we will present here, as we point out in Appendix~\ref{APP-90}.

We recall that existence and uniqueness results
for the porous medium equation with~$p<1$ have been established in~\cite{MR797051}.
We also mention that universal pointwise estimates for porous medium equations
have been obtained in~\cite{MR524760}. In~\cite{MR2487898}
the authors prove, together with other gradient estimates on manifolds,
that one can derive from these universal pointwise estimates
also gradient estimates in the case of fast diffusion.

See also~\cite{MR1219716, MR1431005, MR1618694, MR1664888, MR1720778, MR1742035, MR2264255, MR3621772, 12345, 2019arXiv190304569C, ZU2, MR3633806,
MR2853544, MR2425752, MR2763753, MR3023250}
for parabolic estimates related to the results presented so far.

\medskip

The goal of this article is to enhance
Theorem~\ref{HEAT2} (and consequently Theorem~\ref{HEAT1}) in several directions:
\begin{itemize}
\item First of all, we replace the nonlinear operator~$\Delta(F(u))$ with the more general
nonlinear diffusive term $$a(x,t,u)\Delta(F(u)).$$ Even when~$F(s)=s$,
this improvement is interesting since it corresponds to allowing a heat equation in which the diffusion coefficient of the
substratum depends on space, time, and possibly also the temperature;
\item Moreover, we allow a source term depending on space, time, on the solution itself, and possibly
also on the gradient
and the Hessian of the solution;
\item In addition, instead of a local estimate, we obtain a global
estimate in~$Q_{R,T}$, depending on the parabolic data of the equation,
which recovers the universal estimate in~$Q_{R/2,T/2}$ as a byproduct.
\end{itemize}
To obtain our result, we will perform a number of rather involved and ad-hoc computations and exploit also the cut-off function method that was introduced
in~\cite{ZU2} to address global estimates. We also remark that, as far as we know, our results are new also in the case of nonlinear parabolic equations in the Euclidean space when~$k=0$.\medskip

Our result relies on suitable structural assumptions, that can be seen as natural counterparts of those in~\eqref{CONDMA}, and, to state clearly the estimates obtained, we now introduce precisely the mathematical framework in which we work.
\medskip

We consider the evolution equation
\begin{equation}\label{EQ}
u_{t}=a(x,t,u)\,\Delta(F(u))+H(x,t,u,\nabla u, D^2u)
\end{equation}
on a complete Riemannian manifold~$\mathscr{M}$
of dimension~$n$ and such that
\begin{equation}\label{RIC}
\mathrm{Ric}(\mathscr{M})\geq-k\end{equation}
for some $k\in\R$.
In this notation, $u=u(x,t)$, where~$x\in\mathscr{M}$
is the space variable and $t$ is the time variable.
As customary, the notation ``$\nabla$'' and~``$\Delta$''
is reserved, respectively, for the gradient and the Laplacian
in the space variable.
We suppose that equation~\eqref{EQ} is satisfied for
every~$(x,t)\in
Q_{R,T}\subset\mathscr{M}\times(-\infty,\infty)$,
where~$Q_{R,T}$ was introduced in~\eqref{QRT}.

We take~$a\in C^1(Q_{R,T}\times\R)$ with
\begin{equation}\label{a0bound}
a(x,t,s)\in [a_0,a_0^{-1}]\end{equation}
for all~$(x,t,s)\in Q_{R,T}\times\R$,
for some~$a_0\in(0,1)$.

We suppose that the solution~$u$ is smooth, positive and bounded, namely,
that for every~$(x,t)\in
Q_{R,T}$ we have~$u(x,t)\in(0,M]$, for some~$M>0$.

We suppose that~$F\in C^{2}(0,+\infty)$ with
\begin{equation}\label{FPRIM}
F'(s)>0
\end{equation}
and
\begin{equation}\label{kappapi}
1-\frac{\sqrt{n}\,|F''(s)|\,s}{F'(s)}\geq\kappa>0,\end{equation}
for all~$s\in(0,M]$,
for a positive constant~$\kappa$,
and that $H\in C^1
\big( Q_{R,T}\times\R\times\R^n\times \R^{n^2}\big)$.
With respect to the variables of~$H$, the ``gradient-Hessian''
coordinates in~$\R^n\times \R^{n^2}$ will be denoted by
\begin{equation}\label{COO}
(\omega,\Omega)=\big(
(\omega_i)_{i\in\{1,\dots,n\}},(\Omega_{ij})_{i,j\in\{1,\dots,n\}}\big).
\end{equation}
We also take~$s_0\in(0,+\infty)$ and define, for all~$s\in(0,M]$,
\begin{equation}\label{DEFG} G(s):=\int_{s_0}^s \frac{F'(h)}{h}\,dh,\end{equation}
and we assume that
\begin{equation}\label{XIPOS}
\xi-G(s)\geq\eta>0,
\end{equation}
that
\begin{equation}\label{SUPER}
\frac{F'(s)}{\xi-G(s)}\le\Gamma
\end{equation}
and that
\begin{equation}\label{CONDFOD}
2F'(s)-\frac{\sqrt{n}|F''(s)|s}{F'(s)}\big(\xi-G(s)\big)\geq0
\end{equation}
for all~$s\in(0,M]$, for a
suitable\footnote{It is interesting to remark that
conditions~\eqref{CONDMA0} and~\eqref{CONDMA}
are stronger than
conditions~\eqref{XIPOS}, \eqref{SUPER}
and~\eqref{CONDFOD}. In particular,
if~\eqref{CONDMA0} and~\eqref{CONDMA}
are satisfied, one can take~$\Gamma:=K/\eta$ in~\eqref{SUPER}.
On the other hand, as it will be apparent in Appendix~\ref{APP-90},
it is technically convenient to avoid
requesting assumption~\eqref{CONDMA0}
in order not to limit the potential of the general approach that we present.}
constant~$\xi$ and positive constants~$\eta$ and~$\Gamma$.

We introduce the structural constants
\begin{equation}\label{DEFmu}\begin{split}
\mu_1:=\sup_{(x,t)\in Q_{R,T}}&\Bigg(ka(x,t,u(x,t))F'(u(x,t))
+\frac{H(x,t,u(x,t),\nabla u(x,t), D^2u(x,t))\,F''(u(x,t))}{F'(u(x,t)
)}\\&\qquad\qquad+\partial_{u}H(x,t,u(x,t),\nabla u(x,t), D^2u(x,t))\\&\qquad\qquad
-\frac{H(x,t,u(x,t),\nabla u(x,t), D^2u(x,t))}{u(x,t)
}\\&\qquad\qquad+\frac{H(x,t,u(x,t),\nabla u(x,t), D^2u(x,t))F'(u(x,t))}{\big(\xi-G(u(x,t))\big)\;u(x,t)}\Bigg)_+\end{split}
\end{equation}
and
\begin{equation}\label{DEFgamma}
\gamma_1:=\sup_{(x,t)\in Q_{R,T}}\frac{F'(u(x,t))|\nabla H(x,t,u(x,t),\nabla u(x,t), D^2u
(x,t))|}{u(x,t)}.
\end{equation}
Let also
\begin{equation}\label{MUDUE}\begin{split}&
\mu_2:=\sup_{(x,t)\in Q_{R,T}}\big|\partial_{u}a(x,t,u(x,t))\big|\;
\Big|{\rm div}\big(F'(u(x,t))\nabla u(x,t)\big)\Big|\end{split}
\end{equation}
and
\begin{equation}\label{DEFMU22}
\mu:=\mu_1+\mu_2.\end{equation} We stress that~$\mu_2=0$,
and thus~$\mu=\mu_1$,
when~$a$ depends only on~$x$ and~$t$ (but is
independent of~$u$).

We also consider the quantities
\begin{equation}\label{GAMMA2}\begin{split}&
\gamma_2:=\sup_{(x,t)\in Q_{R,T}}
\frac{ F'(u(x,t))}{ u(x,t)}\big|\nabla a(x,t,u(x,t))\big|\;
\Big|{\rm div}\big(F'(u(x,t))\nabla u(x,t)\big)\Big|\end{split}
\end{equation}
and
\begin{equation}\label{GAMMA32}\begin{split}&
\gamma_3:=\sup_{(x,t)\in Q_{R,T}}
\frac{ F'(u(x,t))}{ u(x,t)}\Bigg(
|\nabla_{\omega} H(x,t,u(x,t),\nabla u(x,t),D^2u(x,t))|\,|D^2u(x,t)|\\&\qquad\qquad+
|D_{\Omega} H(x,t,u(x,t),\nabla u(x,t),D^2u(x,t))|\,|D^3 u(x,t)|\Bigg)\end{split}
\end{equation}
and we set
\begin{equation}\label{DEFMU33}
\gamma:=\gamma_1+\gamma_2+\gamma_3.\end{equation}
We remark that~$\gamma_2=\gamma_3=0$,
and thus~$\gamma=\gamma_1$,
when~$a$ depends only on~$t$ and~$u$ (but is
independent of the space variable), and~$H$ depends only on~$x$, $t$ and~$u$ (but is
independent of the gradient and of the Hessian of the solution).

Given~$\delta\in(0,T)$ and $\rho\in(0,R)$, we define
\begin{equation}\label{TUTTITE-0}
\begin{split}
&{\mathscr{C}}:=\sqrt{\mu}+\sqrt[3]{\gamma},\\
&{\mathscr{T}}:=\frac{1}{\sqrt{\delta}}\\{\mbox{and }}\qquad
&{\mathscr{S}}:=\frac{1}{\rho}+\frac{1}{\sqrt{\rho(R-\rho)}}+\frac{\sqrt[4]{k_+}}{\sqrt{\rho}}.
\end{split}
\end{equation}
We notice that~${\mathscr{C}}$, ${\mathscr{T}}$
and~${\mathscr{S}}$ are functions of~$(x,t)$.
Moreover, we set
\begin{equation}\label{sigtau}\begin{split}&
\tau_u:=\sup_{x\in B(x_0,R)}\frac{F'(u)|\nabla u|}{u(\xi-G(u))}(x,t_0-T),
\\{\mbox{and }}\qquad&
\sigma_u:=\sup_{{x\in\partial B(x_0,R)}\atop{t\in[t_0-T,t_0]}}\frac{F'(u)|\nabla u|}{u(\xi-G(u))}(x,t).
\end{split}\end{equation}

We also consider the functions
\begin{equation}\label{161}\begin{split}&
{\mathscr{B}}_1(x,t):=\chi_{B(x_0,R-\rho)}(x)\chi_{[t_0-T, t_0-T+\delta)}(t),\\&
{\mathscr{B}}_2(x,t):=\chi_{B(x_0,R)\backslash B(x_0,R-\rho)}(x)\chi_{[t_0-T+\delta,t_0]}(t),\\&
{\mathscr{B}}_3(x,t):=\chi_{B(x_0,R)\backslash B(x_0,R-\rho)}(x)\chi_{[t_0-T,t_0-T+\delta)}(t)\\{\mbox{and }}\qquad&
{\mathscr{I}}(x,t):=\chi_{B(x_0,R-\rho)}(x)\chi_{[t_0-T+\delta,t_0]}(t).
\end{split}\end{equation}
As customary, we used here the standard notation
for the characteristic function of a set~$S$, that is
$$ \chi_S(z):=\begin{cases}
1 & {\mbox{ if }}z\in S,\\0&{\mbox{ otherwise.}}
\end{cases}
$$
Also, given a constant~$C>0$ (to be appropriately chosen conveniently
large in the following)
we define
\begin{equation}\label{iotabeta}\begin{split}
\beta_1\,&:=
\tau_u +\min\left\{\sigma_u,\,C{{\mathscr{S}}}\right\},\\
\beta_2\,&:=
\sigma_u +\min\left\{\tau_u,\,C{{\mathscr{T}}}\right\},\\
\beta_3\,&:=\sigma_u +\tau_u\\ {\mbox{and }}\qquad
\iota\,&:=
\min\left\{\sigma_u+\tau_u,\,C({{\mathscr{T}}}+{{\mathscr{S}}})\right\}
.\end{split}\end{equation}
Let also
\begin{equation}\label{ie75v8b76vb502428v0}
\mathscr{Z}:=
\beta_1\,{\mathscr{B}}_1
+\beta_2\,{\mathscr{B}}_2
+\beta_3\,{\mathscr{B}}_3
+\iota\,{\mathscr{I}}.
\end{equation}

With this notation, the main result of this
paper is the following global gradient estimate,
valid in all the domain where a parabolic nonlinear equation
holds true:

\begin{thm}\label{D2}
Let~$\mathscr{M}$ be a
complete Riemannian manifold
of dimension~$n$  satisfying~\eqref{RIC}.
Let~$a\in C^1(Q_{R,T}\times\R)$ satisfying~\eqref{a0bound}
and~$F\in C^{2}(0,+\infty)$ satisfying~\eqref{FPRIM},
\eqref{kappapi}, \eqref{XIPOS}, \eqref{SUPER}
and~\eqref{CONDFOD}.

Let~$u$ be a positive, bounded and smooth solution of the evolution equation~\eqref{EQ}
in~$Q_{R,T}$.

Then,  there exists a constant~$C>0$ depending only on~$n$, $\Gamma$, $\kappa$, $a_{0}$ and~$\eta$ such that the following estimate holds true:
\begin{equation*}
G'(u(x,t))\,|\nabla u(x,t)|\leq \Big(C
\mathscr{C}+
\mathscr{Z}(x,t)\Big)\,
\Big(\xi-G(u(x,t))\Big)\quad{\mbox{ for all }}
(x,t)\in Q_{R,T}.
\end{equation*}
Here we used the notation in~\eqref{DEFG}, \eqref{TUTTITE-0}
and~\eqref{ie75v8b76vb502428v0}.
\end{thm}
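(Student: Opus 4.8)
The plan is to carry out a Li--Yau/Souplet--Zhang type maximum principle argument, in the global form devised in~\cite{ZU2}, with the structural conditions~\eqref{kappapi}, \eqref{XIPOS}, \eqref{SUPER} and~\eqref{CONDFOD} playing the roles that the assumptions in Theorem~\ref{HEAT2} play there. First I would pass to the new unknown $v:=G(u)$. By~\eqref{DEFG} one has $\nabla v=\frac{F'(u)}{u}\nabla u$, hence $F'(u)\nabla u=u\,\nabla v$ and $\Delta(F(u))=\mathrm{div}(u\,\nabla v)=\frac{F'(u)}{u}|\nabla u|^2+u\,\Delta v$, so that~\eqref{EQ} becomes
$$
v_t \;=\; aF'(u)\,\Delta v \;+\; a\,|\nabla v|^2 \;+\; \frac{F'(u)}{u}\,H .
$$
Setting $\psi:=\xi-G(u)$ (so $\psi\geq\eta>0$ by~\eqref{XIPOS}) and $h:=\log\psi$, the left-hand side of the asserted estimate equals $G'(u)\,|\nabla u|=|\nabla v|=\psi\,|\nabla h|$, so it suffices to bound
$$
w \;:=\; |\nabla h|^2 \;=\; \frac{|\nabla v|^2}{\psi^2}
$$
by $(C\mathscr{C}+\mathscr{Z})^2$. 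With $b:=aF'(u)>0$, which by~\eqref{FPRIM} and~\eqref{a0bound} makes $\mathcal{L}:=b\,\Delta-\partial_t$ a linear parabolic operator, a direct computation from the $v$-equation gives $\mathcal{L}h=a\big(\psi-F'(u)\big)\,w+\frac{F'(u)\,H}{u\,\psi}$.

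The core of the proof is a pointwise differential inequality for $w$. Differentiating $w=|\nabla h|^2$, using the Bochner formula $\Delta|\nabla h|^2=2|\nabla^2 h|^2+2\langle\nabla h,\nabla\Delta h\rangle+2\,\mathrm{Ric}(\nabla h,\nabla h)$, the curvature bound~\eqref{RIC}, and the expression for $\mathcal{L}h$ above to compute $\langle\nabla h,\nabla\mathcal{L}h\rangle$, one arrives at an inequality of the schematic form
$$
\mathcal{L}w \;\geq\; c_0\,\psi\,w^2 \;-\; C\,\mu\,w \;-\; C\,\gamma\,\sqrt{w} \;+\; (\text{drift and first-order terms}),
$$
with $c_0>0$ depending only on $n$, $\kappa$, $a_0$ and $\eta$. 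The positive leading term $c_0\psi w^2$ comes from the Bochner term $2b|\nabla^2 h|^2\geq\frac{2b}{n}(\Delta h)^2$ together with $\Delta h=-w+(\text{lower order})$, and from the term $2w\,\langle\nabla h,a\,\nabla\psi\rangle=2a\big(1+\frac{F''(u)u}{F'(u)}\big)\psi\,w^2$ produced by $\nabla\mathcal{L}h$, where~\eqref{kappapi} guarantees $1+\frac{F''(u)u}{F'(u)}\geq\kappa>0$; condition~\eqref{CONDFOD} is then exactly what absorbs the remaining $F''$-contributions (in particular $-2\,\Delta h\,\langle\nabla h,\nabla b\rangle$) into this good term, while~\eqref{SUPER} controls the weight $F'(u)/\psi$ occurring throughout. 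Every other contribution of the coefficient $a$ and of the source $H$ — those carrying $\partial_u a$, $\nabla_x a$, $\partial_u H$, $\nabla_x H$, $\nabla_\omega H$ or $D_\Omega H$ (the last two bringing $D^2u$ and $D^3u$ into play) — is bounded, after using~\eqref{EQ} to replace $\mathrm{div}(F'(u)\nabla u)=\Delta(F(u))$ by $a^{-1}(u_t-H)$ where it appears, precisely by the structural constants $\mu_1$, $\mu_2$ of~\eqref{DEFmu} and~\eqref{MUDUE} and $\gamma_1$, $\gamma_2$, $\gamma_3$ of~\eqref{DEFgamma}, \eqref{GAMMA2} and~\eqref{GAMMA32}; in particular the Ricci term produces the summand $k\,a(x,t,u)\,F'(u)$ visible in~\eqref{DEFmu}. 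This is the long, \emph{ad hoc} computation announced in the Introduction.

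Next I would localize. Take a cut-off $\phi(x,t)=\phi_1(x)\,\phi_2(t)$ with $\phi_1$ a smooth function of $r=d(x,x_0)$ that equals $1$ on $B(x_0,R-\rho)$, is supported in $B(x_0,R)$ and vanishes to second order on $\partial B(x_0,R)$, and $\phi_2$ equal to $1$ for $t\geq t_0-T+\delta$, vanishing at $t=t_0-T$, with $|\phi_2'|\leq C/\delta$. The Laplacian comparison theorem under~\eqref{RIC}, together with Calabi's trick at the cut locus, controls $\Delta\phi_1$; then, at a maximum point of $\phi\,w$ over $\overline{Q_{R,T}}$, the identity $\nabla(\phi w)=0$ (which forces $\phi\,\nabla w=-w\,\nabla\phi$), together with $\Delta(\phi w)\leq0$ and $\partial_t(\phi w)\geq0$, turns the differential inequality above into
$$
c_0\,w \;\leq\; C\,\mu \;+\; C\,\gamma\,w^{-1/2} \;+\; C\,\big(\mathscr{S}^2+\mathscr{T}^2\big)
$$
at that point, where $\mathscr{S}$ and $\mathscr{T}$ are as in~\eqref{TUTTITE-0} (the terms $\rho^{-1}$, $(\rho(R-\rho))^{-1/2}$ and $k_+^{1/4}\rho^{-1/2}$ in $\mathscr{S}$ coming respectively from $|\nabla\phi_1|$, from $\phi_1'$ times the comparison bound for $\Delta r$ on the annulus $R-\rho\le r\le R$, and from the $\sqrt{k}$-part of that bound); solving this algebraic inequality gives $\sqrt{w}\leq C(\mathscr{C}+\mathscr{S}+\mathscr{T})$. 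The global statement is obtained by running this argument separately on the four regions carrying $\mathscr{B}_1$, $\mathscr{B}_2$, $\mathscr{B}_3$, $\mathscr{I}$ of~\eqref{161}, with cut-offs adapted to each: if the maximum of $\phi\,w$ is attained on $B(x_0,R)\times\{t_0-T\}$ then $w\leq\tau_u^2$ and if on $\partial B(x_0,R)\times[t_0-T,t_0]$ then $w\leq\sigma_u^2$ by~\eqref{sigtau}, whereas in the interior we get the bound just displayed; choosing the cut-offs so that on $\mathscr{I}$ both of them are identically $1$ (no boundary datum enters, giving $\iota$), on $\mathscr{B}_3$ both parabolic faces are visible (giving $\sigma_u+\tau_u$), and on $\mathscr{B}_1$, $\mathscr{B}_2$ only one of them (giving $\beta_1$, $\beta_2$) reconstructs exactly the piecewise function $\mathscr{Z}$ of~\eqref{iotabeta} and~\eqref{ie75v8b76vb502428v0}. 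Combining the four cases yields the theorem.

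The main obstacle is the second step: organizing the computation of $\mathcal{L}w$ so that a genuinely positive multiple of $\psi w^2$ survives and each of the many remainders is matched against one of the prescribed structural constants. Compared with Theorem~\ref{HEAT2}, the new difficulties are caused by the dependence of $a$ on $u$ and of $H$ on $u$, $\nabla u$ and $D^2u$, which introduces terms involving derivatives of $u$ up to order three — handled through~\eqref{MUDUE}, \eqref{GAMMA2} and~\eqref{GAMMA32} — and which are absent when $a\equiv1$ and $H\equiv0$. A secondary, essentially bookkeeping, difficulty is the choice of cut-offs in the localization step needed to obtain the sharp four-region form of $\mathscr{Z}$ rather than a cruder global bound.
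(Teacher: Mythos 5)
Your plan is essentially the paper's own proof: the same auxiliary quantity $w=|\nabla G(u)|^2/(\xi-G(u))^2=|\nabla\ln(\xi-G(u))|^2$, the same Bochner-plus-structural-conditions computation yielding a differential inequality with the good term $a\kappa(\xi-G(u))\,w^2$ (with \eqref{kappapi} and \eqref{CONDFOD} providing positivity and absorption of the $F''$-terms) and remainders bounded by $\mu_1,\mu_2,\gamma_1,\gamma_2,\gamma_3$, followed by the same four-regime space/time cut-off maximum-principle argument of~\cite{ZU2}, with $\tau_u$ and $\sigma_u$ catching maxima on the initial and lateral boundaries and the minima over the overlapping regimes reconstructing $\mathscr{Z}$. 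Two small remarks: the paper bounds the $\partial_u a$ and $\nabla_x a$ contributions by $\mu_2$ and $\gamma_2$ keeping $\mathrm{div}(F'(u)\nabla u)$ as it stands (no substitution of $a^{-1}(u_t-H)$ via the equation is needed, and performing it would not reproduce exactly the constants of \eqref{MUDUE} and \eqref{GAMMA2}), and the cut-offs are taken with the fractional-power bounds $\rho|\bar\psi'|+\rho^2|\bar\psi''|\lesssim\bar\psi^{\theta}$ and $\delta|\phi'|\lesssim\phi^{(1+\theta)/2}$ (Lemmas~\ref{PSI} and~\ref{ilLECAS2M}) so that the terms of type $|\nabla\psi|^4/\psi^3$ and $\psi_t w$ can be absorbed into $(\xi-g)w^2\psi$, a point your "vanishes to second order'' cut-off would need to address (e.g.\ by an extra multiplication by powers of the cut-off at the maximum point).
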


Interestingly, Theorem~\ref{D2} includes several recent results
in the literature as a special case. For instance, the particular choice
\begin{equation}\label{RCNm:0orlfkk33}
{\mbox{$F(s):=s$, $a:=1$ and~$H:=H(x,t,u)$,}}
\end{equation}
corresponding to the equation~$\partial_t u=\Delta u+H(x,t,u)$,
taking~$\xi:=1$, $\eta:=1$, $s_0:=M$,
produces in~\eqref{DEFmu}
the quantity
\begin{equation*}
\mu_1=\sup_{(x,t)\in Q_{R,T}}\Bigg(k
+\partial_{u}H(x,t,u(x,t))
-\frac{H(x,t,u(x,t))}{u(x,t)
}+\frac{H(x,t,u(x,t))}{\left(1+\log\frac{M}{u(x,t)}\right)\,u(x,t)}\Bigg)_+,
\end{equation*}
which coincides with the quantity in~(1.6)
of~\cite{ZU2}; similarly, in such a case, in~\eqref{DEFgamma}
we find
\begin{equation*}
\gamma_1=\sup_{(x,t)\in Q_{R,T}}\frac{|\nabla H(x,t,u(x,t))|}{u(x,t)},
\end{equation*}
which coincides with the quantity in~(1.4)
of~\cite{ZU2};
also, in~\eqref{MUDUE}, \eqref{GAMMA2} and~\eqref{GAMMA32}
one finds~$\mu_2=
\gamma_2=\gamma_3=0$, therefore Theorem~\ref{D2}
here recovers, in the special setting of~\eqref{RCNm:0orlfkk33},
the result given in Theorem~1.1 of~\cite{ZU2}.
\medskip

Differently from the previous literature,
our general framework comprises, as a particular
case, the equation~$u_t=u\Delta u+g(u)$ which
models the spread of an epidemic in a closed population
without remotion
and is often
used as a prototype for complicated
and sometimes pathological behavior of the solutions,
see~\cite{MR859613, MR897437}.
\medskip

It is also interesting to comment
on the structure of the estimate obtained in Theorem~\ref{D2},
and especially on the dependence of the bound obtained
by the quantities~$\mu$ and~$\gamma$.
Specifically, being a gradient estimate,
one would like the terms on the right hand side of the estimate
to be independent of the derivatives of the solution,
while, at a first glance, these quantities may depend
on the derivatives up to order three. Nevertheless:
\begin{itemize}
\item The dependence of~$\mu_1$ and~$\gamma_1$
in~\eqref{DEFmu} and~\eqref{DEFgamma}
on the derivatives of~$u$ only occurs via
the source term~$H$: in particular,
if~$H$ and its derivatives are uniformly bounded, then~$\mu_1$ and~$\gamma_1$
can be bounded independently on the derivatives of~$u$;
\item The quantities~$\mu_2$ and~$\gamma_2$
in~\eqref{MUDUE} and~\eqref{GAMMA2}
depend on the derivatives of the solution up to the second order,
but they vanish if the diffusion coefficient~$a$ is either constant
or depends only on time;
\item The quantity~$\gamma_3$ in~\eqref{GAMMA32}
depends on the derivatives up to the third order of the solution,
but it vanishes if the source term~$H$ does not depend on the
derivatives of the solution.
\end{itemize}
That is: on the one hand, in its general form,
under additional bounds on the derivatives of the solution,
the estimate in Theorem~\ref{D2} can be considered as
a pointwise estimate at any~$(x,t)\in Q_{R,T}$;
on the other hand, for the special (but still extremely general)
case given by the equation
$$ \partial_t u=a(t)\,\Delta(F(u))+H(x,t,u),$$
then the structural quantities~$\mu$ and~$\gamma$ can
be bounded independently from the derivatives of the solution,
reducing to
\begin{eqnarray*}
&&\mu=
\sup_{(x,t)\in Q_{R,T}}\Bigg(ka(t)F'(u(x,t))
+\frac{H(x,t,u(x,t))\,F''(u(x,t))}{F'(u(x,t))}\\&&\qquad\qquad+\partial_{u}H(x,t,u(x,t))
-\frac{H(x,t,u(x,t))}{u(x,t)
}+\frac{H(x,t,u(x,t))F'(u(x,t))}{\big(\xi-G(u(x,t))\big)\;u(x,t)}\Bigg)_+\\
{\mbox{and }}&&\gamma=
\sup_{(x,t)\in Q_{R,T}}\frac{F'(u(x,t))|\nabla H(x,t,u(x,t))|}{u(x,t)}.
\end{eqnarray*}
\medskip

The relevant structural quantities
in case of an equation of the type
\begin{equation}\label{APPEPSE}
\partial_t u=a(t)\Delta u^p+\varepsilon\,|\nabla u|^{q}
\end{equation}
with~$\varepsilon>0$
will be discussed, as an exemplifying situation,
in Appendix~\ref{APPEPS}.
\medskip

Furthermore,
one deduces from the global estimate of
Theorem~\ref{D2} a local estimate in~$Q_{R/2,T/2}$, according to the following result.

\begin{cor}\label{noqwmefef004}
Let~$\mathscr{M}$ be a
complete Riemannian manifold
of dimension~$n$  satisfying~\eqref{RIC}.
Let~$a\in C^1(Q_{R,T}\times\R)$ satisfying~\eqref{a0bound}
and~$F\in C^{2}(0,+\infty)$ satisfying~\eqref{FPRIM},
\eqref{kappapi}, \eqref{XIPOS}, \eqref{SUPER}
and~\eqref{CONDFOD}.

Let~$u$ be a positive, bounded and smooth solution of the evolution equation~\eqref{EQ}
in~$Q_{R,T}$.

Then,  there exists a constant~$C>0$
depending only on~$n$, $\Gamma$, $\kappa$, $a_{0}$
and~$\eta$ such that the following estimate holds true: for every~$(x,t)\in Q_{R/2,T/2}$,
\begin{equation}\label{7uNS:034r}
G'(u(x,t))\,|\nabla u(x,t)|\leq C
\left( \sqrt{\mu}+\sqrt[3]{\gamma}+
\frac{1}{R}+\frac1{\sqrt{T}}+\frac{\sqrt[4]{k_+}}{\sqrt{R}}
\right)\Big(\xi-G(u(x,t))\Big).\end{equation}
Here we used the notation in~\eqref{DEFG}, \eqref{DEFMU22}
and~\eqref{DEFMU33}.
\end{cor}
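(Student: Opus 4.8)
The plan is to deduce Corollary~\ref{noqwmefef004} directly from Theorem~\ref{D2} by making the admissible choice of the free parameters $\rho:=R/2\in(0,R)$ and $\delta:=T/2\in(0,T)$, and then checking that on the smaller cylinder only the ``interior'' term of $\mathscr{Z}$ survives. No new analysis is needed: all the work is already encapsulated in Theorem~\ref{D2}.

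First I would examine the localization functions in~\eqref{161} on $Q_{R/2,T/2}$. If $(x,t)\in Q_{R/2,T/2}$, then $x\in B(x_0,R/2)=B(x_0,R-\rho)$ and $t\in[t_0-T/2,t_0]=[t_0-T+\delta,t_0]$; hence $\mathscr{B}_1(x,t)=\mathscr{B}_2(x,t)=\mathscr{B}_3(x,t)=0$ and $\mathscr{I}(x,t)=1$ there. Therefore, by~\eqref{ie75v8b76vb502428v0}, one has $\mathscr{Z}(x,t)=\iota$ throughout $Q_{R/2,T/2}$, and by the definition of $\iota$ in~\eqref{iotabeta}, $\iota\le C(\mathscr{T}+\mathscr{S})$. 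The essential point here is that this bound on $\iota$ does not involve the boundary/initial quantities $\sigma_u$ and $\tau_u$ of~\eqref{sigtau}, which is exactly what turns the data-dependent global bound into a universal interior one.

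Next I would compute $\mathscr{T}$ and $\mathscr{S}$ from~\eqref{TUTTITE-0} for these choices: $\mathscr{T}=1/\sqrt{T/2}=\sqrt2/\sqrt T$, while $\mathscr{S}=2/R+2/R+\sqrt2\,\sqrt[4]{k_+}/\sqrt R$, so that $\mathscr{Z}\le C\bigl(\tfrac1{\sqrt T}+\tfrac1R+\tfrac{\sqrt[4]{k_+}}{\sqrt R}\bigr)$ after enlarging $C$ by a purely numerical factor. Since $\mathscr{C}=\sqrt{\mu}+\sqrt[3]{\gamma}$ with $\mu$ and $\gamma$ as in~\eqref{DEFMU22} and~\eqref{DEFMU33}, inserting these bounds into the conclusion of Theorem~\ref{D2} gives, for all $(x,t)\in Q_{R/2,T/2}$,
\[
G'(u(x,t))\,|\nabla u(x,t)|\le\bigl(C\mathscr{C}+\mathscr{Z}(x,t)\bigr)\bigl(\xi-G(u(x,t))\bigr)\le C\left(\sqrt{\mu}+\sqrt[3]{\gamma}+\frac1R+\frac1{\sqrt T}+\frac{\sqrt[4]{k_+}}{\sqrt R}\right)\bigl(\xi-G(u(x,t))\bigr),
\]
which is precisely~\eqref{7uNS:034r}; moreover, since only explicit numerical constants were absorbed, the final $C$ still depends only on $n$, $\Gamma$, $\kappa$, $a_0$ and $\eta$.

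As for the ``hard part'': there is genuinely no obstacle in this corollary beyond Theorem~\ref{D2} itself. The only step that deserves care is the verification that, on $Q_{R/2,T/2}$, the terms $\beta_1\mathscr{B}_1+\beta_2\mathscr{B}_2+\beta_3\mathscr{B}_3$ vanish identically, so that $\mathscr{Z}$ collapses to $\iota$, and then that the $\min$ in the definition of $\iota$ is used to discard the data-dependent alternative $\sigma_u+\tau_u$ in favour of $C(\mathscr{T}+\mathscr{S})$ — this is exactly the self-improvement mechanism built into the statement of Theorem~\ref{D2}.
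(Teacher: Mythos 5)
Your proposal is correct and coincides with the paper's own proof: the paper likewise takes $\delta:=T/2$ and $\rho:=R/2$, observes that ${\mathscr{B}}_1={\mathscr{B}}_2={\mathscr{B}}_3=0$ on $Q_{R/2,T/2}$ so that ${\mathscr{Z}}=\iota\le C({\mathscr{T}}+{\mathscr{S}})$, and computes ${\mathscr{T}}=\sqrt{2/T}$ and ${\mathscr{S}}=\tfrac4R+\tfrac{\sqrt2\,\sqrt[4]{k_+}}{\sqrt R}$ before inserting these bounds into Theorem~\ref{D2}. Nothing is missing.
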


We also stress that when~$H:=0$ and~$a:=1$, then~$\mu=k_+$
and~$\gamma=0$, therefore
Corollary~\ref{noqwmefef004} contains
Theorem~\ref{HEAT2} (that is,
Theorem 7 in \cite{MR2392508})
as a special case.
In addition,
it also contains Corollary~\ref{99}
(that is,
Corollary 9 in \cite{MR2392508}) as a particular subcase,
as observed in Appendix~\ref{APP-90}.
\medskip

It is also interesting to compare
the statements of
Theorem~\ref{D2} and Corollary~\ref{noqwmefef004}. Evidently,
the estimate obtained in Theorem~\ref{D2}
is global, since it is valid in the whole of the domain
where the equation is satisfied. For this,
the estimate obtained in Theorem~\ref{D2} necessarily
must take into account the ``parabolic data'' of the equation,
which are encoded in the quantities~$\tau_u$ and~$\sigma_u$
defined in~\eqref{sigtau}. On the other hand,
the estimate obtained in Corollary~\ref{noqwmefef004}
holds true only in a subdomain, but then it becomes independent
of the ``parabolic data'' of the equation and relies only on the structural
functions of the equation and on the geometry of the domain.\medskip

We emphasize that the general estimate in
Theorem~\ref{D2} is stronger than the one in Corollary~\ref{noqwmefef004}
even if one reduces to~$Q_{R/2,T/2}$, since
one can also deduce from it that, in~$Q_{R/2,T/2}$,
\begin{equation}\label{ojscdnc98475ty}
G'(u(x,t))\,|\nabla u(x,t)|\leq C
\left( \sqrt{\mu}+\sqrt[3]{\gamma}+\sigma_u+\tau_u
\right)\Big(\xi-G(u(x,t))\Big),\end{equation}
which is a sharper estimate than the one in~\eqref{7uNS:034r}
when the data of the equation are particularly convenient to make~$\tau_u$ and~$\sigma_u$
sufficiently small. That is,
while the estimate in Corollary~\ref{noqwmefef004} has
the advantages of being easier to read
and ``universal'' (i.e., not depending on the boundary data of the equation),
the estimate in Theorem~\ref{D2} is more precise, since
it allows one to possibly recall the boundary data
in order to achieve a sharper result.\medskip

In any case,
to the best of our knowledge, Theorem~\ref{D2}
is the first global estimate for nonlinear parabolic operators,
even in the case of porous medium equation with no source terms,
and also the local version in Corollary~\ref{noqwmefef004}
is the first local estimate to take into account
general porous medium equations with source terms;
besides, the alternative estimate in~\eqref{ojscdnc98475ty}
is the first occurrence in which an improved estimate for these parabolic
equations driven by nonlinear operators
is obtained
thanks to the boundary data.
Moreover, the results obtained are new even in the Euclidean setting.\medskip

We also remark that suitable Liouville-type
results can be easily deduced from our main estimates:
as an example, we provide a rigidity result in Appendix~\ref{DFDF}
that relies on Corollary~\ref{noqwmefef004}.
See also Theorems~1.3 and~1.5 for related
Liouville-type results for porous media equations.\medskip

The rest of this paper is organized as follows.
Section~\ref{MSF:DOFOF} presents the computations
related to a suitable auxiliary function that
will be used to deduce the main results from the maximum principle.
In Section~\ref{KJSMD93sad45}
we develop the necessary calculations
to localize the problem by using suitable cut-off functions
in space and time.
Section~\ref{KJS-M-D93sad45}
contains the proof of
Theorem~\ref{D2} and
Section~\ref{8uCODroldd} provides
the one of Corollary~\ref{noqwmefef004}.

\section{An auxiliary barrier}\label{MSF:DOFOF}

A common procedure in the theory of elliptic and parabolic equations
is to introduce a suitable auxiliary function (that will be denoted by~$w$
in our context) which satisfies a convenient equation; with this,
an appropriate use of the maximum principle provides estimates
on the auxiliary function, which can
be traced back to the original solution.
To implement this technique in our framework, we argue as follows.

Given~$G$ as in~\eqref{DEFG}, for all~$r\in\R$ we define
\begin{equation}\label{DEFg}
g(r):=G(e^r)\end{equation}
and
\begin{equation}\label{DEFlambda}
\lambda(r):=\frac{g'(r)}{\xi-g(r)}-1+\frac{\sqrt{n}|g''(r)|}{2g'(r)}.\end{equation}
It is interesting to observe that, by~\eqref{DEFG},
\begin{equation}\label{1.8bis}
G'(r)=\frac{F'(r)}{r}\qquad{\mbox{and}}\qquad
G''(r)=\frac{F''(r)}{r}-\frac{F'(r)}{r^{2}}.\end{equation}
In addition, since, by~\eqref{DEFg}, we know that~$
g'(r)=e^r\,G'(e^r)$, we deduce from
the assumption~\eqref{FPRIM} on~$F'$ and~\eqref{1.8bis} that
\begin{equation}\label{101}
g'(r)=F'(e^r)>0 \qquad{\mbox{and}}\qquad
g''(r)=e^r F''(e^r).
\end{equation}
Also,
given $u$ as in the statement of Theorem~\ref{D2}, we set
\begin{equation}\label{DEFv}
v(x,t):=\ln u(x,t)\end{equation}
and
\begin{equation}\label{DEFw}
w(x,t):=|\nabla \ln (\xi-g(v(x,t))|^{2}=\frac{|\nabla (g(v(x,t))|^{2}}{(\xi-g(v(x,t)))^{2}}.\end{equation}
We stress that~$w$ is well defined, since
\begin{equation}\label{xig}
\xi-g(v(x,t))=
\xi-G(u(x,t))\ge\eta>0,\end{equation} thanks to~\eqref{XIPOS}.

Also, as usual, the notation~$(
u,\nabla u, D^2u)$ will be used as short for~$(u(x,t),\nabla u(x,t), D^2u(x,t))$.
Furthermore, the notation~$g'$ stands for the derivative of~$g(r)$ with
respect to~$r$, hence~$g'(v)$ is a short notation for~$g'(v(x,t))$.
To clarify this framework, let us point out that
\begin{equation}\label{100}\begin{split}&
\nabla(g(v(x,t))=g'(v(x,t))\nabla v(x,t)\\{\mbox{and }}\qquad&
\nabla(g'(v(x,t))=g''(v(x,t))\nabla v(x,t)=\frac{g''(v(x,t))\;\nabla(g(v(x,t))}{g'(v(x,t))}.
\end{split}
\end{equation}
We recall that the latter denominator is nonzero, thanks to~\eqref{101}.
With this setting, we can state the main result of this section as follows:

\begin{lem}\label{Dl1} Let~$u$ be as in Theorem~\ref{D2}. Then, in~$Q_{R,T}$,
\begin{equation}
\label{LU190h010}
\frac{ag'\Delta w-w_t}{2}\ge
a \kappa (\xi-g) w^2
+a\lambda
\left\langle\nabla w,\nabla g\right\rangle-\mu w
-\frac{\gamma\,|\nabla g|}{(\xi-g)^2},
\end{equation}
where~$g$ is a short notation for~$g(v(x,t))$.
Here, $\kappa$ and~$\xi$ are given in assumptions~\eqref{kappapi}
and~\eqref{XIPOS}, $\lambda$ in~\eqref{DEFlambda}, $\mu$
in~\eqref{DEFMU22} and~$\gamma$ in~\eqref{DEFMU33}.
\end{lem}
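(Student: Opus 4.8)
The plan is to derive the differential inequality~\eqref{LU190h010} by a direct, if lengthy, Bochner-type computation, carefully tracking how the nonlinear diffusion coefficient~$a$ and the source~$H$ enter. First I would establish the PDE satisfied by $v=\ln u$ and by $g(v)=G(u)$. Since $u_t=a\,\Delta(F(u))+H$ and $\nabla(F(u))=F'(u)\nabla u=u\,G'(u)\nabla u=g'(v)\nabla v$ (using~\eqref{1.8bis}), one computes $\Delta(F(u))=\mathrm{div}(g'(v)\nabla v)$ and then, dividing appropriately by $u$, obtains an evolution equation of the schematic form
\begin{equation*}
(g(v))_t = a\,g'(v)\,\Delta(g(v)) + a\,\Big(1-\tfrac{g''(v)}{g'(v)}\Big)|\nabla g(v)|^2\cdot(\text{stuff}) + (\text{terms from }H),
\end{equation*}
where the precise coefficients come from the chain rule $(g(v))_t=g'(v)v_t$ and $v_t=u_t/u$. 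The key point is that the structural assumption~\eqref{kappapi}, rewritten via~\eqref{101} as a lower bound on $1-\tfrac{\sqrt n\,|g''(v)|}{g'(v)}$, is exactly what controls the sign of the resulting zeroth-order-in-$w$ terms.

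Next I would apply the Bochner formula to $\phi:=\ln(\xi-g(v))$, so that $w=|\nabla\phi|^2$. Recall
\begin{equation*}
\tfrac12\Delta|\nabla\phi|^2=|D^2\phi|^2+\langle\nabla\phi,\nabla\Delta\phi\rangle+\mathrm{Ric}(\nabla\phi,\nabla\phi),
\end{equation*}
and use~\eqref{RIC} to bound $\mathrm{Ric}(\nabla\phi,\nabla\phi)\ge -k\,w$, which will feed the $ka\,F'(u)$ contribution sitting inside $\mu_1$ in~\eqref{DEFmu}. The Cauchy–Schwarz inequality $|D^2\phi|^2\ge\tfrac1n(\Delta\phi)^2$ is what produces the crucial $w^2$ term: since $\phi=\ln(\xi-g(v))$, one has $\Delta\phi$ expressible in terms of $\Delta(g(v))$, $|\nabla g(v)|^2$, and $w$, and after substituting the evolution equation for $g(v)$ to eliminate $\Delta(g(v))$ in favour of $(g(v))_t$, the quadratic term $\tfrac1n(\Delta\phi)^2$ will, upon using~\eqref{kappapi}, dominate to give $a\kappa(\xi-g)w^2$. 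Simultaneously I would compute $w_t=2\langle\nabla\phi,\nabla\phi_t\rangle$ and commute $\nabla$ with $\Delta$ using that $\partial_t$ and spatial derivatives commute, so that the combination $\tfrac12(a g'\Delta w-w_t)$ reorganizes into the claimed right-hand side plus error terms.

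The main obstacle — and where the bulk of the work lies — is bookkeeping the lower-order error terms and showing they are absorbed by $-\mu w-\gamma|\nabla g|/(\xi-g)^2$. The terms involving $\partial_u a$ and $\nabla a$ arise because $a$ is not constant: differentiating $a\,\Delta(F(u))$ produces factors $\partial_u a\cdot\nabla u\cdot\Delta(F(u))$ and $\nabla a\cdot\Delta(F(u))$, which is precisely why $\mu_2$ in~\eqref{MUDUE} carries $|\partial_u a|\,|\mathrm{div}(F'(u)\nabla u)|$ and $\gamma_2$ in~\eqref{GAMMA2} carries $|\nabla a|\,|\mathrm{div}(F'(u)\nabla u)|$. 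Likewise, $\nabla H$ via the chain rule splits into $\nabla_x H+\partial_u H\,\nabla u+\nabla_\omega H\cdot D^2u+D_\Omega H\cdot D^3u$, matching the structure of $\gamma_1$ and $\gamma_3$ in~\eqref{DEFgamma} and~\eqref{GAMMA32}; the $\partial_u H$, $H F''/F'$, $-H/u$ and $HF'/((\xi-g)u)$ pieces appearing when one differentiates $v_t$ and regroups are exactly the five summands defining $\mu_1$ in~\eqref{DEFmu}. I would handle these by repeatedly using $|\nabla\phi|^2=w$, $|\nabla g(v)|=(\xi-g)\sqrt{w}$, and Young's inequality to trade any stray $w^{3/2}$ terms against $\epsilon w^2+C_\epsilon w$, choosing $\epsilon$ small enough that the $a\kappa(\xi-g)w^2$ term survives — though in fact the cleanest route keeps all genuinely quadratic-in-$\nabla$ contributions grouped into the $\Delta\phi$ square before applying~\eqref{kappapi}, so that only linear-in-$w$ and $\sqrt w$-type remainders need to be bounded, giving the $\mu w$ and $\gamma|\nabla g|/(\xi-g)^2$ terms respectively. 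Assumptions~\eqref{XIPOS}, \eqref{SUPER}, \eqref{CONDFOD} guarantee all denominators are bounded away from zero and that the sign conditions needed in the regrouping hold, so no further hypotheses are required.
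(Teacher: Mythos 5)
Your plan misidentifies the mechanism that produces the crucial term $a\kappa(\xi-g)w^2$, and this is a genuine gap rather than a cosmetic difference. You propose to get it from $|D^2\phi|^2\ge\tfrac1n(\Delta\phi)^2$ with $\phi:=\ln(\xi-g(v))$, after using the evolution equation to eliminate $\Delta(g(v))$ in favour of $(g(v))_t$. That is the Li--Yau mechanism, and it requires a Harnack quantity that contains a time-derivative term; here $w=|\nabla\phi|^2$ as in~\eqref{DEFw} contains none. Concretely, $\Delta\phi=-\frac{\Delta g}{\xi-g}-w$, so $(\Delta\phi)^2$ can vanish identically (e.g.\ wherever $\Delta g=-(\xi-g)w$) no matter how large $w$ is, and substituting the equation only trades $\Delta g$ for $g_t$, a quantity controlled neither by $w$ nor by the structural constants $\mu,\gamma$. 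Hence $\tfrac1n(\Delta\phi)^2$ cannot ``dominate to give $a\kappa(\xi-g)w^2$''. In the computation the paper actually carries out (a Souplet--Zhang-type argument), the $w^2$ term comes from the quartic terms $\frac{|\nabla g|^4}{(\xi-g)^3}$, $\frac{|\nabla g|^4}{(\xi-g)^4}$ generated by differentiating the denominator $(\xi-g)^2$ in~\eqref{DEFw} when computing $\Delta w$ and $w_t$ (see~\eqref{DD7}--\eqref{DD8}); after regrouping, their coefficient is $2(\xi-g)\bigl(1-\sqrt{n}\,|g''|/g'\bigr)\ge 2\kappa(\xi-g)$ by~\eqref{kappapi} and~\eqref{101}. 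The Bochner Hessian term $|D^2g|^2$ is not the source of $w^2$ at all: it is spent, together with the sign condition~\eqref{CONDFOD} (which gives $\Xi=2g'-\sqrt n\,|g''|(\xi-g)/g'\ge0$) and a completed square, to absorb the dangerous cross terms $\langle\nabla|\nabla g|^2,\nabla g\rangle$ and $g''\,\Delta g\,|\nabla g|^2$.

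Two further points where the proposal is too optimistic. First, the matching of the $\partial_u a$ and $\nabla a$ contributions with~\eqref{MUDUE} and~\eqref{GAMMA2} is not automatic bookkeeping: it relies on the exact identity $g'\Delta g+|\nabla g|^2=\frac{F'(u)\,{\rm div}(F'(u)\nabla u)}{u}$ (the cancellation in~\eqref{cancel}); without it one is left with terms of the type $\partial_u a\,\Delta g\,|\nabla g|^2$ which are not bounded by $\mu w$ as defined. Second, absorbing ``stray $w^{3/2}$ terms'' via Young's inequality into $\epsilon w^2+C_\epsilon w$ cannot yield the lemma as stated: the constant $C_\epsilon$ would carry factors (e.g.\ $\lambda$, $|\nabla g|$, derivatives of $a$) that are not among the admissible structural quantities $\mu$ and $\gamma$, and the first-order term must be kept exactly in the form $a\lambda\langle\nabla w,\nabla g\rangle$ of~\eqref{DEFlambda}, since the later cut-off/maximum-principle step evaluates it at a critical point of $w\psi$. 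In the paper's proof no such absorption occurs: after the cancellation and the completed square, the remainders land precisely on $-\mu w-\gamma|\nabla g|/(\xi-g)^2$.
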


\begin{proof}
We note that
\begin{equation}\label{1.8pre}
\Delta(F(u))=F'(u)\Delta u+F''(u)|\nabla u|^2.
\end{equation}
Hence, by~\eqref{EQ},
\begin{equation}\label{1.8}
u_t=aF'(u)\Delta u+aF''(u)|\nabla u|^{2}+H.
\end{equation}
Also, by~\eqref{DEFv},
\begin{equation}\label{BB}
v_t=\frac{ u_t}u\qquad{\mbox{and}}\qquad
\nabla v=\frac{\nabla u}u,
\end{equation}
whence
\begin{equation}\label{AQU} \frac{G'(u)}{u}\,|\nabla u|^2=\frac{\nabla (G(u))\cdot\nabla u}{u}
=\nabla (G(u))\cdot\nabla v.\end{equation}
To ease the notation, we write~$\nabla G$ as a short notation for~$\nabla (G(u))=\nabla (G(u(x,t)))$ (of course, no confusion
should arise with~$\nabla G(u(x,t))$).
Accordingly, exploiting~\eqref{1.8bis},
\eqref{1.8pre}, \eqref{1.8}, \eqref{BB}
and~\eqref{AQU},
\begin{equation*}\begin{split}
v_t\,&=\frac{aF'(u)\Delta u+aF''(u)|\nabla u|^{2}+H}{u}
\\&=aG'(u)\Delta u+a\left(G''(u)+\frac{F'(u)}{u^{2}}\right)|\nabla u|^{2}+\frac{H}{u}\\
&=aG'(u)\Delta u+a\left(G''(u)+\frac{G'(u)}{u}\right)|\nabla u|^{2}+\frac{H}{u}\\
&=a\Delta(G(u))+a\left\langle\nabla G,\nabla v\right\rangle+\frac{H}{u}.
\end{split}\end{equation*}
Thus, recalling~\eqref{DEFg} and~\eqref{DEFv}, we can write~$G(u)=g(v)$ and thereby obtain that
\begin{equation}\label{89:9}
v_t=a\Delta(g(v))+a\left\langle\nabla g ,\nabla v\right\rangle+\frac{H}{u}.
\end{equation}
where~$\nabla g$ is short for~$\nabla (g( v ))=\nabla( g(v(x,t)))$.

To ease the notation, we also write~$g_t$
to mean~$\partial_{t}(g(v))$.
As a consequence, by~\eqref{89:9} we have that
\begin{equation}\label{DD4}\begin{split}
g_t=g'(v)v_t\,&=ag'(v)\Delta(g(v))+
a\left\langle\nabla g ,g'(v)\nabla v\right\rangle+\frac{g'(v)H}{u}\\
&=ag'(v)\Delta(g(v))+a|\nabla g |^{2}+\frac{g'(v)H}{u}.
\end{split}\end{equation}

Now we observe that, by~\eqref{DEFw},
\begin{equation}\label{DD5}
\nabla w
=\frac{\nabla|\nabla g|^2}{(\xi-g)^{2}}+2\frac{|\nabla g|^2\nabla g}{(\xi-g)^{3}}.
\end{equation}
Moreover, we have that
\begin{equation}\label{DD6}
{\rm div}\,\left(\frac{\nabla|\nabla g|^2}{(\xi-g)^{2}}\right)=
\frac{\Delta|\nabla g|^2}{(\xi-g)^{2}}+
\frac{2\left\langle\nabla|\nabla g|^2,\nabla g\right\rangle}{(\xi-g)^{3}}.
\end{equation}
In addition,
\begin{equation*}
{\rm div}\,\left(\frac{|\nabla g|^2\nabla g}{(\xi-g)^{3}}\right)=
\frac{\left\langle\nabla |\nabla g|^2,\nabla g\right\rangle}{(\xi-g)^{3}}
+
\frac{|\nabla g|^2\Delta g}{(\xi-g)^{3}}+
\frac{3|\nabla g|^4}{(\xi-g)^{4}}.
\end{equation*}
{F}rom this, \eqref{DD5} and~\eqref{DD6}, we deduce that
\begin{equation}\label{DD7}
\begin{split}&
\Delta w\\=\;&
\frac{\Delta|\nabla g|^2}{(\xi-g)^{2}}+
\frac{2\left\langle\nabla|\nabla g|^2,\nabla g\right\rangle}{(\xi-g)^{3}}
+
\frac{2\left\langle\nabla |\nabla g|^2,\nabla g\right\rangle}{(\xi-g)^{3}}
+
\frac{2|\nabla g|^2\Delta g}{(\xi-g)^{3}}+
\frac{6|\nabla g|^4}{(\xi-g)^{4}}
\\=\;&\frac{\Delta|\nabla g|^2}{(\xi-g)^2}+\frac{4\left\langle\nabla|\nabla g|^2,\nabla g\right\rangle}{(\xi-g)^3}
+\frac{2|\nabla g|^2\Delta g}{(\xi-g)^3}+\frac{6|\nabla g|^4}{(\xi-g)^4}.
\end{split}\end{equation}
Besides, using~\eqref{DEFw} and~\eqref{DD4}, we find that
\begin{equation}\label{DD8}
\aligned
w_t&=\frac{2\left\langle\nabla g,\nabla g_t\right\rangle}{(\xi-g)^2}+\frac{2|\nabla g|^2g_t}{(\xi-g)^3}
\\&=\frac{2\left\langle\nabla g,\nabla\big(ag'\Delta g\big)\right\rangle}{(\xi-g)^2}
+\frac{2\left\langle\nabla g,\nabla(a|\nabla g|^2)\right\rangle}{(\xi-g)^2}
+\frac{2\left\langle\nabla g,\nabla\left(\frac{g'(v)H}{u}\right)\right\rangle}{(\xi-g)^2}
+\frac{2ag'\Delta g|\nabla g|^2}{(\xi-g)^3}\\
&~~~~+\frac{2a|\nabla g|^4}{(\xi-g)^3}
+\frac{2|\nabla g|^2\left(\frac{g'(v)H}{u}\right)}{(\xi-g)^3}.
\endaligned
\end{equation}
In light of~\eqref{100}, we also have that
\begin{equation}\label{DD9}
\begin{split}&
\left\langle\nabla g,\nabla\big(ag'\Delta g\big)\right\rangle\\
=\,&ag'\left\langle\nabla g,\nabla\Delta g\right\rangle
+a\Delta g\left\langle\nabla g,\nabla g'\right\rangle
+ g'\Delta g\left\langle\nabla g,\nabla a\right\rangle
+g'\Delta g\left\langle\nabla g,\partial_u a\nabla u\right\rangle
\\
=\,&ag'\left\langle\nabla g,\nabla\Delta g\right\rangle
+\frac{ag''\Delta g}{g'}\,|\nabla g|^2
+g'\Delta g\left\langle\nabla g,\nabla a\right\rangle
+\partial_u a\, g'\Delta g\left\langle\nabla g,\nabla u\right\rangle.\end{split}
\end{equation}

Now, using~\eqref{101} and~\eqref{DEFv},
we see that~$g''(v)=F''(u)u$.
Thus, recalling the coordinate notation in~\eqref{COO}, and making also use of~\eqref{100}
and~\eqref{BB},
we find that
\begin{equation}\label{TT}
\aligned
\nabla\left(\frac{g'(v)H}{u}\right)&=\frac{g''(v)H\nabla v}{u}+\frac{g'(v)\nabla H}{u}+\frac{g'(v)\partial_{u}H\nabla u}{u}-\frac{g'(v)H\nabla u}{u^{2}}+\Upsilon_1\\
&=\frac{F''(u)H\nabla g }{F'(u)}+\frac{F'(u)\nabla H}{u}+\partial_{u}H\nabla g -\frac{H\nabla g }{u}+\Upsilon_1,
\endaligned
\end{equation}
where
\begin{equation}\label{UPSI1} \Upsilon_1:=\frac{g'(v)}{u}\left(\sum_{i=1}^n\partial_{\omega_i} H\nabla\partial_{x_i} u+\sum_{i,j=1}^n\partial_{\Omega_{ij}} H
\nabla\partial_{x_i x_j}^2 u
\right).\end{equation}
This observation,
together with~\eqref{DD8},
\eqref{DD9} and~\eqref{TT}, yields that
\begin{equation}\nonumber
\aligned
w_t&=\frac{2ag'\left\langle\nabla g,\nabla\Delta g\right\rangle}{(\xi-g)^2}
+\frac{\frac{2ag''\Delta g}{g'}\,|\nabla g|^2}{(\xi-g)^2}
+\frac{2\left\langle\nabla g,\nabla(a|\nabla g|^2)\right\rangle}{(\xi-g)^2}
+\frac{2F''(u)H|\nabla g|^2}{F'(u)(\xi-g)^2}\\
&~~~~+\frac{2F'(u)\left\langle\nabla g,\nabla H\right\rangle}{u(\xi-g)^2}
+\frac{2\partial_{u}H|\nabla g|^2}{(\xi-g)^2}
-\frac{2H|\nabla g|^2}{u(\xi-g)^2}\\
&~~~~+\frac{2ag'\Delta g|\nabla g|^2}{(\xi-g)^3}
+\frac{2a|\nabla g|^4}{(\xi-g)^3}+\frac{2|\nabla g|^2\left(\frac{g'(v)H}{u}\right)}{(\xi-g)^3}+\Upsilon_2\\
&=\frac{2ag'\left\langle\nabla g,\nabla\Delta g\right\rangle}{(\xi-g)^2}
+\frac{2ag''\Delta g|\nabla g|^2}{g'(\xi-g)^2}
+\frac{2\left\langle\nabla g,\nabla(a|\nabla g|^2)\right\rangle}{(\xi-g)^2}
+\frac{2F''(u)H|\nabla g|^2}{F'(u)(\xi-g)^2}\\
&~~~~+\frac{2F'(u)\left\langle\nabla g,\nabla H\right\rangle}{u(\xi-g)^2}
+\frac{2\partial_{u}H|\nabla g|^2}{(\xi-g)^2}
-\frac{2H|\nabla g|^2}{u(\xi-g)^2}\\
&~~~~+\frac{2ag'\Delta g|\nabla g|^2}{(\xi-g)^3}
+\frac{2a|\nabla g|^4}{(\xi-g)^3}+\frac{2F'(u)H|\nabla g|^2}{u(\xi-g)^3}+\Upsilon_2
,\endaligned
\end{equation}
where
\begin{equation}\label{UPSIL2} \Upsilon_2:=\frac{2\left\langle \nabla g,\Upsilon_1\right\rangle}{(\xi-g)^2}
+\frac{2g'\Delta g\,\left\langle \nabla g,\nabla a+\partial_{u}a\nabla u\right\rangle}{(\xi-g)^2}.\end{equation}
This and~\eqref{DD7}, after the cancellation of the term~$\frac{2ag'\Delta g\,|\nabla g|^2}{(\xi-g)^3}$, give that
\begin{equation}\label{DD10}
\aligned
ag'\Delta w-w_t&=\frac{ag'\Delta|\nabla g|^2}{(\xi-g)^2}+\frac{4ag'\left\langle\nabla|\nabla g|^2,\nabla g\right\rangle}{(\xi-g)^3}
+\frac{6ag'|\nabla g|^4}{(\xi-g)^4}-\frac{2ag'\left\langle\nabla g,\nabla\Delta g\right\rangle}{(\xi-g)^2}\\
&~~~~-\frac{2F''(u)H|\nabla g|^2}{F'(u)(\xi-g)^2}
-\frac{2F'(u)\left\langle\nabla g,\nabla H\right\rangle}{u(\xi-g)^2}
-\frac{2\partial_{u}H|\nabla g|^2}{(\xi-g)^2}\\
&~~~~+\frac{2H|\nabla g|^2}{u(\xi-g)^2}
-\frac{2ag''\Delta g|\nabla g|^2}{g'(\xi-g)^2}
-\frac{2\left\langle\nabla g,\nabla(a|\nabla g|^2)\right\rangle}{(\xi-g)^2}
-\frac{2a|\nabla g|^4}{(\xi-g)^3}\\
&~~~~-\frac{2F'(u)H|\nabla g|^2}{u(\xi-g)^3}-\Upsilon_2.
\endaligned
\end{equation}
\par
Now we recall the Bochner's formula, according to which
$$ \Delta {\bigg (}{\frac {|\nabla g|^{2}}{2}}{\bigg )}=\left\langle \nabla \Delta g,\nabla g\right\rangle +|D^2 g|^{2}+{\mbox{Ric}}(\nabla g,\nabla g).$$
This and the Ricci curvature assumption in~\eqref{RIC} entail that
\begin{eqnarray*}
 \Delta|\nabla g|^{2}-2\left\langle \nabla \Delta g,\nabla g\right\rangle =
2|D^2 g|^{2}+2{\mbox{Ric}}(\nabla g,\nabla g)\ge
2|D^2 g|^{2}-2k\,|\nabla g|^2
,\end{eqnarray*}
where, as customary, the norm of a matrix is taken to be the square root
of the sum of the squares of its entry.

Plugging this information in~\eqref{DD10}, we conclude that
\begin{equation}\label{DD11}
\aligned
ag'\Delta w-w_t&\ge\frac{2ag'|D^2 g|^{2}}{(\xi-g)^2}
-\frac{2ag'k|\nabla g|^2}{(\xi-g)^2}
+\frac{4ag'\left\langle\nabla|\nabla g|^2,\nabla g\right\rangle}{(\xi-g)^3}
+\frac{6ag'|\nabla g|^4}{(\xi-g)^4}\\
&~~~~-\frac{2F''(u)H|\nabla g|^2}{F'(u)(\xi-g)^2}
-\frac{2F'(u)\left\langle\nabla g,\nabla H\right\rangle}{u(\xi-g)^2}
-\frac{2\partial_{u}H|\nabla g|^2}{(\xi-g)^2}\\
&~~~~+\frac{2H|\nabla g|^2}{u(\xi-g)^2}
-\frac{2ag''\Delta g|\nabla g|^2}{g'(\xi-g)^2}
-\frac{2\left\langle\nabla g,\nabla(a|\nabla g|^2)\right\rangle}{(\xi-g)^2}
-\frac{2a|\nabla g|^4}{(\xi-g)^3}\\
&~~~~-\frac{2F'(u)H|\nabla g|^2}{u(\xi-g)^3}-\Upsilon_2\\
&\ge \frac{2ag'|D^2 g|^{2}}{(\xi-g)^2}
+\frac{4ag'\left\langle\nabla|\nabla g|^2,\nabla g\right\rangle}{(\xi-g)^3}
+\frac{6ag'|\nabla g|^4}{(\xi-g)^4}
-\frac{2\mu_1|\nabla g|^2}{(\xi-g)^2}\\
&~~~~-\frac{2\gamma_1\,|\nabla g|}{(\xi-g)^2}
-\frac{2ag''\Delta g|\nabla g|^2}{g'(\xi-g)^2}-\frac{2\left\langle\nabla g,\nabla(a|\nabla g|^2)\right\rangle}{(\xi-g)^2}
-\frac{2a|\nabla g|^4}{(\xi-g)^3}-\Upsilon_2,
\endaligned
\end{equation}
where the definitions of~$\mu_1$ and~$\gamma_1$ in~\eqref{DEFmu}
and~\eqref{DEFgamma} have been exploited.

It is now convenient to define
\begin{equation}\label{ZETA:SDE}
\zeta:=\left(\sqrt{n}\frac{|g''|}{g'}(\xi-g)\right)^{-1}.
\end{equation}
We point out that~$\zeta\in(0,+\infty]$, due to~\eqref{101}
and~\eqref{xig}.
By the Cauchy-Schwarz inequality, if~$g''\ne0$ (hence~$\zeta\ne+\infty$),
one has that
\begin{equation}\label{SM:osd}
2\Bigg|\frac{g''\Delta g|\nabla g|^2}{g'(\xi-g)^2}\Bigg|\le\zeta\left(\frac{g''}{g'}\right)^{2}(\Delta g)^{2}+\frac{|\nabla g|^4}{\zeta(\xi-g)^4}
\le\zeta n\left(\frac{g''}{g'}\right)^{2}|D^{2}g|^{2}+\frac{|\nabla g|^4}{\zeta(\xi-g)^4}.\end{equation}
We will now make use of~\eqref{SM:osd}
at all points, with the convention that, since the left
hand side of~\eqref{SM:osd} vanishes when~$g''=0$,
the terms involving~$\zeta$ can simply be neglected
in the forthcoming computations. In this sense,
putting together~\eqref{DD11} and~\eqref{SM:osd} we see that
\begin{equation}\label{DD12}
\begin{split}
ag'\Delta w-w_t&\ge \Bigg[2g'-\zeta n\left(\frac{g''}{g'}\right)^{2}(\xi-g)^2\Bigg]\frac{a|D^2 g|^{2}}{(\xi-g)^2}
+\frac{4ag'\left\langle\nabla|\nabla g|^2,\nabla g\right\rangle}{(\xi-g)^3}
\\&\qquad+\Big(6g'-\frac{1}{\zeta} \Big)\frac{a|\nabla g|^4}{(\xi-g)^4}
-\frac{2\left\langle\nabla g,\nabla(a|\nabla g|^2)\right\rangle}{
(\xi-g)^2}\\&\qquad
-\frac{2a|\nabla g|^4}{(\xi-g)^3}-\frac{2\mu_1|\nabla g|^2}{(\xi-g)^2}
-\frac{2\gamma_1\,|\nabla g|}{(\xi-g)^2}-\Upsilon_2.
\end{split}
\end{equation}
Furthermore, in light of~\eqref{DD5},
\begin{equation}\nonumber
\left\langle\nabla w,\nabla g\right\rangle=
\frac{\left\langle\nabla |\nabla g|^2,\nabla g\right\rangle}{(\xi-g)^2}+\frac{2|\nabla g|^4}{(\xi-g)^3},
\end{equation}
and, as a result,
\begin{eqnarray*}&&
\frac{4ag'\left\langle\nabla |\nabla g|^2,\nabla g\right\rangle}{(\xi-g)^3}
+\frac{6ag'|\nabla g|^4}{(\xi-g)^4}\\&
=&
\left(
\frac{2ag'\left\langle\nabla |\nabla g|^2,\nabla g\right\rangle}{(\xi-g)^3}
+\frac{4ag'|\nabla g|^4}{(\xi-g)^4}\right)+\left(
\frac{2ag'\left\langle\nabla |\nabla g|^2,\nabla g\right\rangle}{(\xi-g)^3}
+\frac{2ag'|\nabla g|^4}{(\xi-g)^4}
\right)\\&
=&\frac{2ag'\left\langle\nabla w,\nabla g\right\rangle}{\xi-g}+
\frac{2ag'\left\langle\nabla |\nabla g|^2,\nabla g\right\rangle}{(\xi-g)^3}
+\frac{2ag'|\nabla g|^4}{(\xi-g)^4}
.\end{eqnarray*}
The previous
two identities, combined with~\eqref{DD12}, yield that
\begin{equation}\label{7tyfv6erdftwcgwuy2w}
\aligned
ag'\Delta w-w_t&\ge \Bigg[2g'-\zeta n\left(\frac{g''}{g'}\right)^{2}(\xi-g)^2
\Bigg]\frac{a|D^2 g|^{2}}{(\xi-g)^2}
+\frac{2ag'\left\langle\nabla|\nabla g|^2,\nabla g\right\rangle}{(\xi-g)^3}
\\&~~~~+\left(2g'-\frac{1}{\zeta} \right)\frac{a|\nabla g|^4}{(\xi-g)^4}
+\frac{2a|\nabla g|^4}{(\xi-g)^3}+2a\left(\frac{g'}{\xi-g}-1\right)
\left\langle\nabla w,\nabla g\right\rangle
\\&~~~~-\frac{2\mu_1|\nabla g|^2}{(\xi-g)^2}
-\frac{2\gamma_1\,|\nabla g|}{(\xi-g)^2}-\Upsilon_3,
\endaligned
\end{equation}
where
\begin{equation}\label{UPSLI3} \Upsilon_3:=\Upsilon_2 +
\frac{2\left\langle\nabla g,\nabla a+\partial_{u}a\nabla u\right\rangle\,|\nabla g|^2}{(\xi-g)^2}.\end{equation}

It is now convenient to factor out a term of the type
\begin{equation}\label{XIDEF9}
\Xi:=
2g'-\zeta n\left(\frac{g''}{g'}\right)^{2}(\xi-g)^2\end{equation} from the first three terms
in the right hand side of~\eqref{7tyfv6erdftwcgwuy2w} (up to a reminder).
For this, we write
\begin{equation}\label{760394hkS4}
\begin{split}
&\Bigg[2g'-\zeta n\left(\frac{g''}{g'}\right)^{2}(\xi-g)^2
\Bigg]\frac{a|D^2 g|^{2}}{(\xi-g)^2}
+\frac{2ag'\left\langle\nabla|\nabla g|^2,\nabla g\right\rangle}{(\xi-g)^3}
+\left(2g'-\frac{1}{\zeta} \right)\frac{a|\nabla g|^4}{(\xi-g)^4}
\\
=\;&\Xi\,\frac{a|D^2 g|^{2}}{(\xi-g)^2}
+\left(\Xi
+\zeta n\left(\frac{g''}{g'}\right)^{2}(\xi-g)^2
\right)\frac{a\left\langle\nabla|\nabla g|^2,\nabla g\right\rangle}{(\xi-g)^3}
\\&\qquad+\left(\Xi+\zeta n\left(\frac{g''}{g'}\right)^{2}(\xi-g)^2
-\frac{1}{\zeta} \right)\frac{a|\nabla g|^4}{(\xi-g)^4}\\
=\;& \Xi\,\left( \frac{a|D^2 g|^{2}}{(\xi-g)^2}+
\frac{a\left\langle\nabla|\nabla g|^2,\nabla g\right\rangle}{(\xi-g)^3}
+\frac{a|\nabla g|^4}{(\xi-g)^4}\right)
\\&\qquad+
\zeta n\left(\frac{g''}{g'}\right)^{2}
\frac{a\left\langle\nabla|\nabla g|^2,\nabla g\right\rangle}{\xi-g}
+\left(\zeta n\left(\frac{g''}{g'}\right)^{2}(\xi-g)^2
-\frac{1}{\zeta} \right)\frac{a|\nabla g|^4}{(\xi-g)^4}
.
\end{split}\end{equation}
Now we claim that
\begin{equation}\label{XIEPS}
\Xi\ge0.
\end{equation}
Indeed, recalling~\eqref{101}, \eqref{ZETA:SDE} and~\eqref{XIDEF9},
\begin{eqnarray*}
\Xi=2g'-
\frac{\sqrt{n}\,|g''|\,(\xi-g)}{g'}=
2F'(u)-\frac{\sqrt{n}\,|F''(u)|\,u\,(\xi-G(u))}{F'(u)}
\end{eqnarray*}
and therefore~\eqref{XIEPS} is a consequence of~\eqref{CONDFOD}.

We also remark that~$\nabla|\nabla g|^2=2D^2g\nabla g$, and consequently
\begin{eqnarray*}0&\le&\left(\frac{\left\langle\nabla|\nabla g|^2,\nabla g\right\rangle}{2(\xi-g)|\nabla g|^2}
+\frac{|\nabla g|^2}{(\xi-g)^{2}}\right)^2\\
&=&
\left(\frac{\left\langle\nabla|\nabla g|^2,\nabla g\right\rangle}{2(\xi-g)|\nabla g|^2}\right)^2
+
\frac{\left\langle\nabla|\nabla g|^2,\nabla g\right\rangle}{(\xi-g)^{3}}
+\frac{|\nabla g|^4}{(\xi-g)^4}\\&=&
\left(\frac{\left\langle D^2g\, \nabla g,\nabla g\right\rangle}{(\xi-g)|\nabla g|^2}\right)^2
+
\frac{\left\langle\nabla|\nabla g|^2,\nabla g\right\rangle}{(\xi-g)^{3}}
+\frac{|\nabla g|^4}{(\xi-g)^4}\\&\le&
\frac{|D^2g|^2}{(\xi-g)^2}+
\frac{\left\langle\nabla|\nabla g|^2,\nabla g\right\rangle}{(\xi-g)^3}
+\frac{|\nabla g|^4}{(\xi-g)^4}.
\end{eqnarray*}
This information, \eqref{760394hkS4} and~\eqref{XIEPS} give that
\begin{eqnarray*}
&&\Bigg[2g'-\zeta n\left(\frac{g''}{g'}\right)^{2}(\xi-g)^2
\Bigg]\frac{a|D^2 g|^{2}}{(\xi-g)^2}
+\frac{2ag'\left\langle\nabla|\nabla g|^2,\nabla g\right\rangle}{(\xi-g)^3}
+\left(2g'-\frac{1}{\zeta} \right)\frac{a|\nabla g|^4}{(\xi-g)^4}\\&&\qquad\ge
\zeta n\left(\frac{g''}{g'}\right)^{2}\frac{a\left\langle\nabla|\nabla g|^2,
\nabla g\right\rangle}{\xi-g}
+\left(\zeta n\left(\frac{g''}{g'}\right)^{2}(\xi-g)^2
-\frac{1}{\zeta} \right)\frac{a|\nabla g|^4}{(\xi-g)^4}.
\end{eqnarray*}

Now, we insert this inequality into~\eqref{7tyfv6erdftwcgwuy2w},
thus finding that
\begin{equation}\label{1.38cj}
\aligned
ag'\Delta w-w_t&\ge
\zeta n\left(\frac{g''}{g'}\right)^{2}
\frac{a\left\langle\nabla|\nabla g|^2,\nabla g\right\rangle}{\xi-g}
+\left(2(\xi-g)+\zeta n\left(\frac{g''}{g'}\right)^{2}(\xi-g)^2
-\frac{1}{\zeta} \right)\frac{a|\nabla g|^4}{(\xi-g)^4}
\\&\qquad
+2a\left(\frac{g'}{\xi-g}-1\right)
\left\langle\nabla w,\nabla g\right\rangle-\frac{2\mu_1|\nabla g|^2}{(\xi-g)^2}
-\frac{2\gamma_1\,|\nabla g|}{(\xi-g)^2}-\Upsilon_3.
\endaligned
\end{equation}
Moreover, it is convenient to exploit~\eqref{DD5}
once again and note that
\begin{eqnarray*}
\frac{\left\langle\nabla|\nabla g|^2,\nabla g\right\rangle}{(\xi-g)^{2}}
+\frac{2|\nabla g|^4}{(\xi-g)^{3}}
=\left\langle
\frac{\nabla|\nabla g|^2}{(\xi-g)^{2}}+2\frac{|\nabla g|^2\nabla g}{(\xi-g)^{3}},\nabla g\right\rangle
=\left\langle\nabla w,\nabla g\right\rangle
\end{eqnarray*}
and consequently
\begin{eqnarray*}&&
\zeta n\left(\frac{g''}{g'}\right)^{2}
\frac{a\left\langle\nabla|\nabla g|^2,\nabla g\right\rangle}{\xi-g}
+\left(2(\xi-g)+\zeta n\left(\frac{g''}{g'}\right)^{2}(\xi-g)^2
-\frac{1}{\zeta} \right)\frac{a|\nabla g|^4}{(\xi-g)^4}\\&=&
\zeta n a\left(\frac{g''}{g'}\right)^{2}(\xi-g)\left(
\left\langle\nabla w,\nabla g\right\rangle-
\frac{2|\nabla g|^4}{(\xi-g)^{3}}
\right)\\&&\qquad
+\left(2(\xi-g)+\zeta n\left(\frac{g''}{g'}\right)^{2}(\xi-g)^2
-\frac{1}{\zeta} \right)\frac{a|\nabla g|^4}{(\xi-g)^4}\\&=&
\zeta n a\left(\frac{g''}{g'}\right)^{2}(\xi-g)
\left\langle\nabla w,\nabla g\right\rangle
+\left(2(\xi-g)-\zeta n\left(\frac{g''}{g'}\right)^{2}(\xi-g)^2
-\frac{1}{\zeta} \right)\frac{a|\nabla g|^4}{(\xi-g)^4}
.\end{eqnarray*}
We can thereby plug this information into~\eqref{1.38cj}
and deduce that
\begin{equation}\label{DD13}
\aligned
ag'\Delta w-w_t&\ge \left(2(\xi-g)-\zeta n\left(\frac{g''}{g'}\right)^{2}(\xi-g)^2
-\frac{1}{\zeta} \right)\frac{a|\nabla g|^4}{(\xi-g)^4}
\\&\qquad
+a\left(\frac{2g'}{\xi-g}-2
+\zeta n \left(\frac{g''}{g'}\right)^{2}(\xi-g)
\right)
\left\langle\nabla w,\nabla g\right\rangle\\&\qquad-\frac{2\mu_1|\nabla g|^2}{(\xi-g)^2}
-\frac{2\gamma_1\,|\nabla g|}{(\xi-g)^2}-\Upsilon_3.
\endaligned
\end{equation}
Now we remark that
\begin{eqnarray*}
&&2(\xi-g)-\zeta n\left(\frac{g''}{g'}\right)^{2}(\xi-g)^2
-\frac{1}{\zeta}
=2(\xi-g)\left(1-\sqrt{n}\frac{|g''|}{g'}\right)
\\&&\qquad=2(\xi-g)\left(1-\sqrt{n}\frac{|F''(u)|\,u}{F'(u)}\right)\ge
2\kappa(\xi-g),
\end{eqnarray*}
thanks to~\eqref{kappapi}, \eqref{101}, \eqref{xig} and~\eqref{ZETA:SDE}.

For this reason, recalling the definition of~$w$ in~\eqref{DEFw},
we obtain that
$$ \left(2(\xi-g)-\zeta n\left(\frac{g''}{g'}\right)^{2}(\xi-g)^2
-\frac{1}{\zeta} \right)\frac{a|\nabla g|^4}{(\xi-g)^4}\ge
\frac{2\kappa a|\nabla g|^4}{(\xi-g)^3}=2a \kappa (\xi-g) w^2.
$$
Hence, by~\eqref{DD13},
\begin{equation}\label{ScjiengFUA}
\aligned
ag'\Delta w-w_t&\ge
2a \kappa (\xi-g) w^2
+a\left(\frac{2g'}{\xi-g}-2
+\zeta n \left(\frac{g''}{g'}\right)^{2}(\xi-g)
\right)
\left\langle\nabla w,\nabla g\right\rangle\\&\qquad-\frac{2\mu_1|\nabla g|^2}{(\xi-g)^2}
-\frac{2\gamma_1\,|\nabla g|}{(\xi-g)^2}-\Upsilon_3.
\endaligned
\end{equation}
We also note that
\begin{eqnarray*}
&&\frac{2g'}{\xi-g}-2
+\zeta n \left(\frac{g''}{g'}\right)^{2}(\xi-g)
=
\frac{2g'}{\xi-g}-2
+\sqrt{n }\frac{|g''|}{g'}=2\lambda,
\end{eqnarray*}
thanks to~\eqref{DEFlambda} and~\eqref{ZETA:SDE}.

Using this identity and~\eqref{DEFw}
inside~\eqref{ScjiengFUA}, we get that
\begin{equation}\label{1.52c}
\aligned
ag'\Delta w-w_t&\ge
2a \kappa (\xi-g) w^2
+2a\lambda
\left\langle\nabla w,\nabla g\right\rangle-2\mu_1 w
-\frac{2\gamma_1\,|\nabla g|}{(\xi-g)^2}-\Upsilon_3.
\endaligned
\end{equation}
Now we observe that, by~\eqref{UPSI1},
\eqref{UPSIL2} and~\eqref{UPSLI3},
\begin{equation}\label{76090756724}
\begin{split}
\Upsilon_3\,&=
\frac{2\left\langle \nabla g,\Upsilon_1\right\rangle}{(\xi-g)^2}
+\frac{2g'
\Delta g\,\left\langle \nabla g,\nabla a+\partial_{u}a\nabla u\right\rangle}{(\xi-g)^2}
+\frac{2\left\langle\nabla g,\nabla a+\partial_{u}a\nabla u\right\rangle\,|\nabla g|^2}{(\xi-g)^2}\\
&=
\frac{2}{(\xi-g)^2}\left\langle \nabla g,
\frac{g'(v)}{u}\left(\sum_{i=1}^n\partial_{\omega_i} H\nabla
\partial_{x_i} u+\sum_{i,j=1}^n\partial_{\Omega_{ij}} H
\nabla\partial_{x_i x_j}^2 u\right)\right\rangle\\&\qquad
+\frac{2g'
\Delta g\,\left\langle \nabla g,\nabla a+\partial_{u}a\nabla u\right\rangle}{(\xi-g)^2}
+\frac{2\left\langle\nabla g,\nabla a+\partial_{u}a\nabla u\right\rangle\,|\nabla g|^2}{(\xi-g)^2}
.\end{split}\end{equation}
From~\eqref{100} and~\eqref{BB}, we also note that
$$ \Delta g=g''|\nabla v|^2+g'\Delta v$$
and
$$ \Delta v={\rm div}\left(\frac{\nabla u}u\right)
=\frac{\Delta u}{u}-\frac{|\nabla u|^2}{u^2}=
\frac{\Delta u}{u}-|\nabla v|^2
.$$
These observations lead to
\begin{equation}\label{cancel}
\begin{split}
&g'\Delta g+|\nabla g|^2=g'
g''|\nabla v|^2+(g')^2\Delta v+(g')^2|\nabla v|^2
\\&\qquad=
g'g''|\nabla v|^2+(g')^2\left(\frac{\Delta u}{u}-|\nabla v|^2\right)+(g')^2|\nabla v|^2
\\&\qquad=
F'(u)\left(uF''(u)\frac{|\nabla u|^2}{u^2}+F'(u)\frac{\Delta u}{u}\right)
\\&\qquad=\frac{F'(u)\big({\rm div}(F'(u)\nabla u)\big)}{u}
.\end{split}\end{equation}
As a result, after an interesting cancellation we conclude that
\begin{equation}\label{part1}
\begin{split}
&\frac{2g'
\Delta g\,\left\langle \nabla g,\partial_{u}a\nabla u\right\rangle}{(\xi-g)^2}+
\frac{2\left\langle\nabla g,\partial_{u}a\nabla u\right\rangle\,|\nabla g|^2}{(\xi-g)^2}
\\&\quad
=\frac{2F'(u)\big({\rm div}(F'(u)\nabla u)\big)\partial_{u}a\left\langle\nabla g,\nabla u\right\rangle}{u(\xi-g)^2}
=
\frac{2\big({\rm div}(F'(u)\nabla u)\big)\partial_{u}a\left\langle\nabla g,
\nabla g\right\rangle}{(\xi-g)^2}\\&\quad
=2\partial_{u}a\big({\rm div}(F'(u)\nabla u)\big)w
\le 2\mu_2\,w,
\end{split}
\end{equation}
where the definition of~$\mu_2$ given in~\eqref{MUDUE}
has been used in the  inequality. Using again~\eqref{cancel},
\begin{equation}\label{part2}
\begin{split}
&\frac{2g'
\Delta g\,\left\langle \nabla g,\nabla a\right\rangle}{(\xi-g)^2}+
\frac{2\left\langle\nabla g,\nabla a\right\rangle\,|\nabla g|^2}{(\xi-g)^2}
\\&\qquad
=\frac{2F'(u)\big({\rm div}(F'(u)\nabla u)\big)\left\langle\nabla g,\nabla a\right\rangle}{u(\xi-g)^2}
\le\frac{2\gamma_2|\nabla g|}{(\xi-g)^2},
\end{split}
\end{equation}
where we have used the definition of~$\gamma_2$
in~\eqref{GAMMA2}.

{F}rom~\eqref{76090756724}, ~\eqref{part1} and~\eqref{part2},
we see that
\begin{equation}\label{76868583}
\begin{split}
\Upsilon_3\,&\le
\frac{2}{(\xi-g)^2}\left\langle \nabla g,
\frac{g'(v)}{u}\left(\sum_{i=1}^n\partial_{\omega_i} H\nabla
\partial_{x_i} u+\sum_{i,j=1}^n\partial_{\Omega_{ij}} H
\nabla\partial_{x_i x_j}^2 u\right)\right\rangle
+2\mu_2 w+\frac{2\gamma_2|\nabla g|}{(\xi-g)^2}.\end{split}\end{equation}

Now, recalling the definition of~$\gamma_3$
in~\eqref{GAMMA32}, we have that
\begin{eqnarray*}&&
\left|\frac{g'(v)}{u}\left(\sum_{i=1}^n\partial_{\omega_i} H\nabla
\partial_{x_i} u+\sum_{i,j=1}^n\partial_{\Omega_{ij}} H
\nabla\partial_{x_i x_j}^2 u\right)\right|\\&&\qquad\qquad
=
\left|\frac{F'(u)}{u}\left(\sum_{i=1}^n\partial_{\omega_i} H\nabla
\partial_{x_i} u+\sum_{i,j=1}^n\partial_{\Omega_{ij}} H
\nabla\partial_{x_i x_j}^2 u\right)\right|
\le\gamma_3.
\end{eqnarray*}
This and~\eqref{76868583} yield that
$$\Upsilon_3\le\frac{2(\gamma_2+\gamma_3)|\nabla g|}{(\xi-g)^2}+2\mu_2 w.$$
Combining this estimate and~\eqref{1.52c} we obtain the desired result in~\eqref{LU190h010}.
\end{proof}

\section{Cut-off functions and localization procedures}\label{KJSMD93sad45}

In order to obtain the global bounds in Theorem~\ref{D2},
we distinguish four
regimes, according to
the cut-off functions in~\eqref{161}.
For this, we recall the following auxiliary cut-off functions, both in the space and
in the time variables, that have been introduced in Lemmata~2.2 and~2.3
in~\cite{ZU2}:

\begin{lem}\label{PSI}
Let~$\theta\in(0,1)$, $R>0$ and $\rho \in (0,R)$. Then, there exists a decreasing
function~$\bar{\psi}\in C^2(\R,[0,1])$ such that
\begin{equation}\label{7132r3yhsjdsid}
{\mbox{$\bar{\psi}(r)=1$ for all~$r\in[0,R-\rho]$, \quad $\bar{\psi}(r)=0$
for all~$r\ge R$,}}
\end{equation}
and, for every~$r\ge0$,
\begin{equation}\label{si-a}
\rho|\bar{\psi}'(r)|+\rho^2|\bar{\psi}''(r)|\le C\big(\bar{\psi}(r)\big)^{\theta},
\end{equation}
for some~$C>0$, depending only
on~$\theta$.
\end{lem}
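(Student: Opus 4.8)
The statement to prove is Lemma~\ref{PSI}, the construction of a radial cut-off function $\bar\psi$ supported in $[0,R]$, identically $1$ on $[0,R-\rho]$, and satisfying the scaled derivative bound $\rho|\bar\psi'|+\rho^2|\bar\psi''|\le C(\bar\psi)^\theta$. The plan is to build $\bar\psi$ explicitly by taking a fixed model profile on the unit-length transition interval and rescaling it to the actual transition interval $[R-\rho,R]$. Concretely, I would first fix once and for all a smooth decreasing function $\phi\in C^2(\R,[0,1])$ with $\phi=1$ on $(-\infty,0]$, $\phi=0$ on $[1,+\infty)$, and—this is the crucial design choice—with $\phi$ flattening out polynomially fast near the right endpoint so that $\phi(r)$ vanishes to high enough order at $r=1$; a standard choice is to take $\phi(r)=\Phi(1-r)$ on $[0,1]$ where $\Phi$ behaves like $s^{m}$ near $s=0^+$ for $m$ large, suitably smoothed. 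Then I set $\bar\psi(r):=\phi\!\left(\frac{r-(R-\rho)}{\rho}\right)$.

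The properties in~\eqref{7132r3yhsjdsid} are immediate from the corresponding properties of $\phi$ and the affine change of variables. For~\eqref{si-a}, the chain rule gives $\bar\psi'(r)=\rho^{-1}\phi'(\cdot)$ and $\bar\psi''(r)=\rho^{-2}\phi''(\cdot)$, so that $\rho|\bar\psi'(r)|+\rho^2|\bar\psi''(r)|=|\phi'(\sigma)|+|\phi''(\sigma)|$ where $\sigma=\frac{r-(R-\rho)}{\rho}$, and $\bar\psi(r)=\phi(\sigma)$. Thus the scaled bound in~\eqref{si-a} is equivalent to the $\rho$-independent, $R$-independent inequality
\begin{equation*}
|\phi'(\sigma)|+|\phi''(\sigma)|\le C\,\big(\phi(\sigma)\big)^{\theta}\qquad\text{for all }\sigma\in\R,
\end{equation*}
with $C$ depending only on $\theta$ (through the choice of $\phi$, which in turn is pinned down by $\theta$). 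Outside $[0,1]$ this is trivial since both sides behave compatibly ($\phi$ is constant there, so the left side is $0$). On the compact interval $[0,1]$ one only has to worry about the points where $\phi$ vanishes, i.e. near $\sigma=1$; elsewhere $\phi$ is bounded below by a positive constant and $|\phi'|,|\phi''|$ are bounded, so the inequality holds for a large enough $C$.

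The main obstacle—really the only point requiring care—is arranging the correct vanishing rate of $\phi$ at $\sigma=1$ so that the inequality $|\phi'|+|\phi''|\le C\phi^{\theta}$ survives there. If $\phi(\sigma)\sim(1-\sigma)^{m}$ as $\sigma\to1^-$, then $|\phi'|\sim(1-\sigma)^{m-1}$, $|\phi''|\sim(1-\sigma)^{m-2}$, while $\phi^{\theta}\sim(1-\sigma)^{m\theta}$; the binding constraint is $m-2\ge m\theta$, i.e. $m\ge \frac{2}{1-\theta}$, which is satisfiable precisely because $\theta<1$, and it forces $C$ to depend on $\theta$. So I would choose $m:=\lceil \frac{2}{1-\theta}\rceil+1$ (or any larger integer), verify that the resulting $\phi$ can be taken $C^2$ (indeed $C^\infty$ away from the two corners, and $C^2$ across them since $m\ge 3$), decreasing, with values in $[0,1]$, and then check the displayed inequality on $[0,1]$: near $\sigma=1$ by the power-counting just described, and on any interval $[0,1-\varepsilon]$ by compactness together with $\phi\ge\phi(1-\varepsilon)>0$ there. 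This furnishes the constant $C=C(\theta)$ and completes the construction. (One can also quote this verbatim from Lemmata~2.2--2.3 of~\cite{ZU2}, where the identical statement is proved; the argument above is the standard self-contained version.)
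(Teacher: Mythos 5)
Your construction is correct, and it is worth noting that the paper itself offers no proof of this lemma: it is quoted verbatim from Lemmata~2.2 and~2.3 of~\cite{ZU2}, exactly as you observe in your closing parenthesis. Your self-contained argument is the standard one underlying that reference: reduce, via the affine rescaling $\bar\psi(r)=\phi\big((r-(R-\rho))/\rho\big)$, to a $\rho$- and $R$-independent inequality $|\phi'|+|\phi''|\le C\phi^{\theta}$ for a fixed model profile, and then enforce this near the vanishing point by prescribing polynomial decay of order $m$ with $m-2\ge m\theta$, i.e.\ $m\ge 2/(1-\theta)$, which is exactly where the hypothesis $\theta<1$ and the $\theta$-dependence of $C$ enter. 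The power counting and the compactness argument away from $\sigma=1$ are both right, and $m\ge3$ does give $C^2$ matching at the right corner. The only point you gloss over is the left corner at $\sigma=0$: if $\phi$ were literally the power $(1-\sigma)^m$ up to $\sigma=0$ it would not match $C^1$-smoothly with $\phi\equiv1$ on $(-\infty,0]$, so a separate smoothing there (or, more cleanly, taking $\phi:=(1-\eta)^m$ for a fixed smooth step function $\eta$ that is flat at both endpoints, which makes the whole profile $C^\infty$ and still satisfies the same power counting since $|\eta'|,|\eta''|$ are bounded) is needed; this is routine and does not affect the validity of the argument.
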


\begin{lem}\label{ilLECAS2M}
Let~$t_0\in\R$ and~$T>0$.
Let~$\theta\in(0,1)$ and~$\delta\in(0,T)$. Then, there exists an increasing
function~$\phi\in C^2(\R,[0,1])$ such that
\begin{equation}\label{7132r3yhsjdsid-t}
{\mbox{$\phi(t)=0$ for all~$t\le t_0-T$, and~$\phi(t)=1$
for all~$t\ge t_0-T+\delta$,}}
\end{equation}
and, for every~$t\in\R$,
\begin{equation}\label{si-a-t}
\delta|\phi'(t)|\le C\big(\phi(t)\big)^{\frac{1+\theta}2},
\end{equation}
for some~$C>0$, depending only
on~$\theta$.
\end{lem}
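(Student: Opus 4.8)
The plan is to reduce the statement, by an affine rescaling of the time variable, to the construction of a single fixed one-variable profile, and to produce the differential inequality in~\eqref{si-a-t} by a ``power trick''. Set $\alpha:=\frac{2}{1-\theta}$; since $\theta\in(0,1)$ we have $\alpha>2$, and the exponents enjoy the identity $\alpha-1=\frac{1+\theta}{1-\theta}=\alpha\cdot\frac{1+\theta}{2}$, which is the whole reason for this choice of~$\alpha$. Fix once and for all a non-decreasing function $h\in C^\infty(\R,[0,1])$ with $h\equiv0$ on $(-\infty,0]$ and $h\equiv1$ on $[1,+\infty)$ (a standard mollified Heaviside function, all of whose derivatives vanish at the two junction points $0$ and $1$), and put $C_0:=\sup_{r\in\R}|h'(r)|<+\infty$. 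Define the profile $\Psi:=h^{\alpha}$. Because $h$ is smooth, non-negative, and flat at $0$ and $1$, the function $\Psi$ is again of class $C^\infty(\R)$; it is non-decreasing, takes values in $[0,1]$, and satisfies $\Psi\equiv0$ on $(-\infty,0]$ and $\Psi\equiv1$ on $[1,+\infty)$.

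Now I would simply define
\begin{equation*}
\phi(t):=\Psi\!\left(\frac{t-(t_0-T)}{\delta}\right).
\end{equation*}
Then $\phi\in C^2(\R,[0,1])$, it is increasing (being the composition of a non-decreasing function with an increasing affine map), and, since the argument is $\le0$ exactly for $t\le t_0-T$ and $\ge1$ exactly for $t\ge t_0-T+\delta$, the requirements in~\eqref{7132r3yhsjdsid-t} hold.

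It remains to verify~\eqref{si-a-t}. Write $r:=\frac{t-(t_0-T)}{\delta}$, so that $\phi'(t)=\frac1\delta\,\Psi'(r)=\frac{\alpha}{\delta}\,h(r)^{\alpha-1}\,h'(r)$, and therefore
\begin{equation*}
\delta\,|\phi'(t)|=\alpha\,h(r)^{\alpha-1}\,|h'(r)|\le\alpha\,C_0\,h(r)^{\alpha-1}.
\end{equation*}
Since $0\le h(r)\le1$ and $\alpha-1\ge\alpha\cdot\frac{1+\theta}{2}$, we have $h(r)^{\alpha-1}\le h(r)^{\alpha(1+\theta)/2}=\bigl(h(r)^{\alpha}\bigr)^{(1+\theta)/2}=\bigl(\phi(t)\bigr)^{(1+\theta)/2}$ (this also trivially covers the case $h(r)=0$). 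Hence $\delta\,|\phi'(t)|\le C\,\bigl(\phi(t)\bigr)^{(1+\theta)/2}$ with $C:=\alpha\,C_0$, which depends only on~$\theta$, as required.

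The only genuinely delicate point in this argument is the choice of the exponent $\alpha$: it has to be large enough that raising $h$ to the power $\alpha$ suppresses the derivative $h'$ exactly at the rate $\phi^{(1+\theta)/2}$ near the level set $\{\phi=0\}$ — which forces $\alpha\ge\frac{2}{1-\theta}$ — while at the same time $h^{\alpha}$ must remain of class $C^2$ across the junctions; the value $\alpha=\frac{2}{1-\theta}$, being strictly larger than~$2$, achieves both at once precisely because $h$ is flat there. The spatial cut-off $\bar\psi$ of Lemma~\ref{PSI} is obtained by the same device, starting instead from a non-increasing smooth step and raising it to a power depending only on~$\theta$ dictated by the bound $\rho|\bar\psi'|+\rho^2|\bar\psi''|\le C\bar\psi^{\theta}$.
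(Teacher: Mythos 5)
Your construction is correct: with $\alpha=\frac{2}{1-\theta}$ one has exactly $\alpha-1=\alpha\cdot\frac{1+\theta}{2}$, so $\delta|\phi'(t)|=\alpha\,h^{\alpha-1}|h'|\le \alpha\,C_0\,\phi^{\frac{1+\theta}{2}}$ with a constant depending only on $\theta$, and the support properties in~\eqref{7132r3yhsjdsid-t} are immediate from the rescaling. Note that the paper does not prove this lemma at all: it is quoted verbatim from Lemma~2.3 of~\cite{ZU2}, so your argument is a self-contained filling-in of the omitted proof, and the ``raise a smooth step to a $\theta$-dependent power'' device you use is precisely the standard route for such cut-offs (the same device, with the exponent dictated by the second-order bound, yields the spatial cut-off of Lemma~\ref{PSI}).

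One small remark: the claim that $\Psi=h^{\alpha}$ is $C^{\infty}$ because $h$ is flat at the junctions is more than you need and would require a little care to justify for arbitrary flat $h$; but the lemma only asks for $C^{2}$, and since $\alpha>2$ the formulas $\Psi'=\alpha h^{\alpha-1}h'$ and $\Psi''=\alpha(\alpha-1)h^{\alpha-2}(h')^{2}+\alpha h^{\alpha-1}h''$ involve only nonnegative powers of $h$ times bounded functions, so $\Psi\in C^{2}$ follows directly (flatness is not even needed for this). Also, ``increasing'' in the statement must be read as non-decreasing, which your $\phi$ satisfies, exactly as the intended cut-off does.
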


Now we obtain a general inequality for
the auxiliary barrier~$w$ introduced in~\eqref{DEFw}
in dependence of
a smooth and positive function~$\psi$:

\begin{lem}
Let~$\psi\in C^\infty(Q_{R,T}, \,(0,+\infty))$. Then, there exists~$C>0$, depending only on~$a_0$
and~$\kappa$, such that
\begin{equation}\label{DD15}
\aligned
&\qquad\frac{ag'\Delta (w\psi)-(w\psi)_t}{2}-a\left\langle\nabla(w\psi),\frac{g'\nabla\psi}{\psi}+\lambda\nabla g \right\rangle
\\&\ge\,\frac{a_{0}\kappa (\xi-g) w^2\psi}{4}
-a\lambda w
\left\langle \nabla\psi,\nabla g\right\rangle-\frac{C\mu^2\psi}{(\xi-g)}
-\frac{C\gamma^{4/3}\,\psi}{(\xi-g)^{5/3}}
\\&\qquad-\frac{ag'w|\nabla\psi|^{2}}{\psi}+\frac{(ag'\Delta\psi-\psi_{t})w}{2}.
\endaligned
\end{equation}
\end{lem}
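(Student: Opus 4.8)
The plan is to reduce the asserted inequality to the one for $w$ already proved in Lemma~\ref{Dl1}, by a careful accounting of the terms produced when the Leibniz rule is applied to the product $w\psi$. First I would write, using the product rules for $\Delta$ and $\partial_t$,
\begin{equation*}
\frac{ag'\Delta(w\psi)-(w\psi)_{t}}{2}=\frac{\psi\,\bigl(ag'\Delta w-w_{t}\bigr)}{2}+ag'\langle\nabla w,\nabla\psi\rangle+\frac{(ag'\Delta\psi-\psi_{t})w}{2},
\end{equation*}
and, since $\nabla(w\psi)=\psi\nabla w+w\nabla\psi$ and $\bigl\langle\nabla\psi,\tfrac{g'\nabla\psi}{\psi}\bigr\rangle=\tfrac{g'|\nabla\psi|^{2}}{\psi}$,
\begin{equation*}
a\Bigl\langle\nabla(w\psi),\tfrac{g'\nabla\psi}{\psi}+\lambda\nabla g\Bigr\rangle=ag'\langle\nabla w,\nabla\psi\rangle+a\lambda\psi\langle\nabla w,\nabla g\rangle+\frac{ag'w|\nabla\psi|^{2}}{\psi}+a\lambda w\langle\nabla\psi,\nabla g\rangle.
\end{equation*}
Subtracting the second display from the first and invoking Lemma~\ref{Dl1} (multiplied by $\psi>0$) to bound $\tfrac{\psi(ag'\Delta w-w_t)}{2}$ from below, the two gradient cross-terms $ag'\langle\nabla w,\nabla\psi\rangle$ cancel and so do the two occurrences of $a\lambda\psi\langle\nabla w,\nabla g\rangle$ — this is precisely why the drift vector in the statement is taken to be $\tfrac{g'\nabla\psi}{\psi}+\lambda\nabla g$. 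What remains is that the left-hand side of~\eqref{DD15} is bounded below by
\begin{equation*}
a\kappa(\xi-g)w^{2}\psi-\mu w\psi-\frac{\gamma|\nabla g|\,\psi}{(\xi-g)^{2}}+\frac{(ag'\Delta\psi-\psi_{t})w}{2}-\frac{ag'w|\nabla\psi|^{2}}{\psi}-a\lambda w\langle\nabla\psi,\nabla g\rangle.
\end{equation*}

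It then remains only to absorb the two error terms $\mu w\psi$ and $\gamma|\nabla g|\psi/(\xi-g)^{2}$ into the good quadratic term. For this I would use that $a\ge a_{0}$, so $a\kappa(\xi-g)w^{2}\psi\ge a_{0}\kappa(\xi-g)w^{2}\psi$, and that, by the very definition~\eqref{DEFw} of $w$, one has $|\nabla g|/(\xi-g)=\sqrt{w}$, hence $\gamma|\nabla g|\psi/(\xi-g)^{2}=\gamma\sqrt{w}\,\psi/(\xi-g)$. By Young's inequality, for every $\varepsilon>0$,
\begin{equation*}
\mu w\psi\le\varepsilon(\xi-g)w^{2}\psi+\frac{\mu^{2}\psi}{4\varepsilon(\xi-g)},\qquad\frac{\gamma\sqrt{w}\,\psi}{\xi-g}\le\varepsilon(\xi-g)w^{2}\psi+C(\varepsilon)\,\frac{\gamma^{4/3}\psi}{(\xi-g)^{5/3}},
\end{equation*}
the second estimate obtained by applying the conjugate exponents $4$ and $4/3$ to the product $\bigl((\xi-g)^{1/4}w^{1/2}\psi^{1/4}\bigr)\cdot\bigl(\gamma\psi^{3/4}(\xi-g)^{-5/4}\bigr)$. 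Choosing $\varepsilon=\tfrac{3a_{0}\kappa}{8}$ in both inequalities, the two quadratic remainders add up to $\tfrac{3a_{0}\kappa}{4}(\xi-g)w^{2}\psi$, which leaves exactly $\tfrac{a_{0}\kappa}{4}(\xi-g)w^{2}\psi$ and produces a constant $C$ depending only on $a_{0}$ and $\kappa$; the remaining three terms $-a\lambda w\langle\nabla\psi,\nabla g\rangle$, $-ag'w|\nabla\psi|^{2}/\psi$ and $(ag'\Delta\psi-\psi_{t})w/2$ are simply carried along unchanged. This yields~\eqref{DD15}.

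I do not expect a genuine obstacle here: given Lemma~\ref{Dl1}, the argument is elementary. The only points demanding care are, first, tracking all the signs so that every first-order-in-$\nabla w$ term is cancelled against the drift term $a\langle\nabla(w\psi),\tfrac{g'\nabla\psi}{\psi}+\lambda\nabla g\rangle$ (which is the design principle behind that particular combination), and, second, selecting the correct Hölder exponents ($4$ and $4/3$) so that the $\gamma$-remainder comes out with the powers $\gamma^{4/3}$ and $(\xi-g)^{-5/3}$ appearing in the statement. These remainder terms involving $\psi$ and its derivatives will be dealt with later, once the explicit cut-off functions of Lemmata~\ref{PSI} and~\ref{ilLECAS2M} are inserted.
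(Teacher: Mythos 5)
Your proposal is correct and follows essentially the same route as the paper: the Leibniz-rule decomposition of $\Delta(w\psi)$ and $(w\psi)_t$, the cancellation of the $\nabla w$ cross-terms against the drift $a\langle\nabla(w\psi),\tfrac{g'\nabla\psi}{\psi}+\lambda\nabla g\rangle$ together with Lemma~\ref{Dl1} multiplied by $\psi$, and then the absorption of $\mu w\psi$ and $\gamma\sqrt{w}\,\psi/(\xi-g)$ via Cauchy--Schwarz and Young with exponents $4$ and $4/3$, exactly as in the paper (only the bookkeeping of which fraction of $a_0\kappa(\xi-g)w^2\psi$ is spent on each absorption differs, immaterially). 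No gaps.
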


\begin{proof} We have that
\begin{equation}\label{DD15ed}
\begin{split}&\frac{ag'\Delta(w\psi)-(w\psi)_t}{2}-\frac{ag'
\langle\nabla(w\psi),\nabla\psi\rangle
}{\psi}\\&\quad=\frac{(ag'\Delta w-w_t)\,\psi}{2}+\frac{(ag'\Delta\psi-\psi_t)\,w}2
-\frac{ag'w\,|\nabla\psi|^2}\psi.\end{split}
\end{equation}
Hence, subtracting~$a\lambda\langle\nabla(w\psi),\nabla g\rangle$
to both sides of~\eqref{DD15ed},
\begin{equation*}
\aligned
&\frac{ag'\Delta (w\psi)-(w\psi)_t}{2}-a\left\langle\nabla(w\psi),\frac{g'\nabla\psi}{\psi}+\lambda\nabla g \right\rangle
\\&=\frac{(ag'\Delta w-w_t)\,\psi}{2}+\frac{(ag'\Delta\psi-\psi_t)\,w}2
-\frac{ag'w\,|\nabla\psi|^2}\psi-a\lambda\langle\nabla(w\psi),\nabla g\rangle.
\endaligned
\end{equation*}
As a result, it follows from Lemma~\ref{Dl1} that
\begin{equation}\label{2.68bis}
\aligned
&\qquad\frac{ag'\Delta (w\psi)-(w\psi)_t}{2}-a\left\langle\nabla(w\psi),\frac{g'\nabla\psi}{\psi}+\lambda\nabla g \right\rangle
\\&\ge\,a \kappa (\xi-g) w^2\psi
+a\lambda
\left\langle\nabla w,\nabla g\right\rangle\psi-\mu w\psi
-\frac{\gamma\,|\nabla g|\psi}{(\xi-g)^2}
\\&\qquad-a\lambda\langle\nabla(w\psi),\nabla g\rangle-\frac{ag'w\,|\nabla\psi|^2}\psi+\frac{(ag'\Delta\psi-\psi_{t})w}{2}.
\endaligned
\end{equation}
One can also notice that
$$
\langle\nabla w,\nabla g\rangle\psi
-\langle\nabla(w\psi),\nabla g\rangle=-
w\,\langle\nabla \psi,\nabla g\rangle,
$$
which together with~\eqref{a0bound} and~\eqref{2.68bis} implies that
\begin{equation}\label{original0}
\aligned
&\qquad\frac{ag'\Delta (w\psi)-(w\psi)_t}{2}-a\left\langle\nabla(w\psi),\frac{g'\nabla\psi}{\psi}+\lambda\nabla g \right\rangle
\\&\ge\,a_{0}\kappa (\xi-g) w^2\psi
-a\lambda w
\left\langle \nabla\psi,\nabla g\right\rangle-\mu w\psi
-\frac{\gamma\,|\nabla g|\psi}{(\xi-g)^2}
\\&\qquad-\frac{ag'w|\nabla\psi|^{2}}{\psi}+\frac{(ag'\Delta\psi-\psi_{t})w}{2}.
\endaligned
\end{equation}

In addition, from~\eqref{DEFw} and Young's inequality with exponents~$4$ and~$4/3$,
\begin{equation}\label{DD25}\begin{split}
&\frac{\gamma\,|\nabla g|\psi}{(\xi-g)^2}=
\frac{\gamma\psi\,\sqrt{w}}{\xi-g}=\sqrt[4]{\xi-g}\;\sqrt{w}\;\sqrt[4]{\psi}\;
\frac{\gamma\psi^{\frac34}}{(\xi-g)^{\frac54}}
\\&\qquad\le
\frac{a_{0}\kappa(\xi-g) w^2\psi}{2}
+\frac{C\gamma^{4/3}\,\psi}{(a_{0}\kappa)^{1/3}(\xi-g)^{5/3}}
\\&\qquad\le
\frac{a_{0}\kappa(\xi-g) w^2\psi}{2}
+\frac{C\gamma^{4/3}\,\psi}{(\xi-g)^{5/3}},
\end{split}\end{equation}
for some~$C>0$, possibly varying from line to line
and possibly depending only on~$a_0$
and~$\kappa$. Formulas~\eqref{original0} and~\eqref{DD25} entail that
\begin{equation}\label{original1}
\aligned
&\qquad\frac{ag'\Delta (w\psi)-(w\psi)_t}{2}-a\left\langle\nabla(w\psi),\frac{g'\nabla\psi}{\psi}+\lambda\nabla g \right\rangle
\\&\ge\,\frac{a_{0}\kappa (\xi-g) w^2\psi}{2}
-a\lambda w
\left\langle \nabla\psi,\nabla g\right\rangle-\mu w\psi
-\frac{C\gamma^{4/3}\,\psi}{(\xi-g)^{5/3}}
\\&\qquad-\frac{ag'w|\nabla\psi|^{2}}{\psi}+\frac{(ag'\Delta\psi-\psi_{t})w}{2}.
\endaligned
\end{equation}
Besides, by the Cauchy-Schwarz inequality,
\begin{equation}\label{DD27}
\aligned
&\mu w\psi=\big(\sqrt{\xi-g}\;w\;\sqrt{\psi}\big)
\;\left(\frac{\mu\, \sqrt{\psi}}{\sqrt{\xi-g}}\right)\le
\frac{a_{0}\kappa(\xi-g) w^2\psi}{4}+
\frac{C\mu^2\psi}{a_{0}{\kappa(\xi-g)}}
\\&\qquad\le
\frac{a_{0}\kappa(\xi-g) w^2\psi}{4}+
\frac{C\mu^2\psi}{(\xi-g)},
\endaligned
\end{equation}
which combined with~\eqref{original1} gives the desired result in~\eqref{DD15}.
\end{proof}

Inequality~\eqref{DD15} will play a pivotal a role in the following computations
in order to analyze four different regimes, as given by~\eqref{161}.

\begin{lem}\label{Dl3}
In the setting of Theorem~\ref{D2}, if~$x\in B(x_0,R-\rho)$ and~$t\in [t_0-T,t_0]$,
\begin{equation}\label{CA:0}
w \le\Bigg[\tau_u^2+C\Big(\mu
+\gamma^{2/3}+\frac{
1}{\rho^2}+\frac{1}{\rho(R-\rho)}
+\frac{\sqrt{k_+ }}{\rho}\Big)\Bigg],
\end{equation}
for some~$C>0$, depending only on~$n$, $\eta$, $a_0$, $\kappa$ and~$\Gamma$.
Here, $\tau_u$, $\mu$ and~$\gamma$ are the quantities defined in~\eqref{sigtau},
\eqref{DEFMU22} and~\eqref{DEFMU33}, respectively.
\end{lem}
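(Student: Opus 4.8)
The plan is to apply the parabolic maximum principle to the localized barrier $w\psi$ in the region corresponding to the cut-off function $\mathscr{B}_1$ in~\eqref{161}, i.e.\ with a $\psi$ of the form $\psi(x,t)=\bar\psi(\mathrm{dist}(x,x_0))\,\phi(t)$ built from Lemmata~\ref{PSI} and~\ref{ilLECAS2M} (with a suitable choice of $\theta$, say $\theta=3/4$). On the parabolic boundary of $B(x_0,R)\times[t_0-T,t_0]$ the product $w\psi$ vanishes except on the initial slice $\{t=t_0-T\}\cap B(x_0,R-\rho)$, where $\psi=1$ and $w=\tau_u^2$ by the definition of $\tau_u$ in~\eqref{sigtau} and of $w$ in~\eqref{DEFw}. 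Hence $w\psi$ attains its maximum over the relevant closure either at such an interior-of-the-spatial-ball initial point — giving the $\tau_u^2$ term — or at an interior space-time point of the cylinder, at which $\nabla(w\psi)=0$, $(w\psi)_t\le 0$ and $\Delta(w\psi)\le 0$.

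At such an interior maximum point $(\bar x,\bar t)$, the left-hand side of~\eqref{DD15} is $\le 0$: indeed $ag'\Delta(w\psi)\le 0$, $-(w\psi)_t\le 0$, and the gradient term vanishes because $\nabla(w\psi)=0$ there. Therefore~\eqref{DD15} forces
\begin{equation*}
\frac{a_0\kappa(\xi-g)w^2\psi}{4}\le a\lambda w\langle\nabla\psi,\nabla g\rangle+\frac{C\mu^2\psi}{\xi-g}+\frac{C\gamma^{4/3}\psi}{(\xi-g)^{5/3}}+\frac{ag'w|\nabla\psi|^2}{\psi}-\frac{(ag'\Delta\psi-\psi_t)w}{2},
\end{equation*}
evaluated at $(\bar x,\bar t)$. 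The next step is to bound each term on the right using: the cut-off estimates~\eqref{si-a} and~\eqref{si-a-t} together with $|\nabla\mathrm{dist}|=1$ (and the Laplacian comparison theorem under $\mathrm{Ric}\ge -k$, which produces the $\sqrt{k_+}/\rho$ contribution in the $\Delta\psi$ term); the bound $\xi-g\ge\eta$ from~\eqref{XIPOS}/\eqref{xig}; the bound $g'/(\xi-g)=F'(u)/(\xi-G(u))\le\Gamma$ from~\eqref{SUPER}, which controls $\lambda$ and the $g'$ prefactors in terms of $\Gamma$; and $|\nabla g|^2=w(\xi-g)^2$, so that $|\langle\nabla\psi,\nabla g\rangle|\le|\nabla\psi|\sqrt{w}(\xi-g)$. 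Since $\psi^\theta\le 1$ and $|\nabla\psi|^2/\psi$, $|\Delta\psi|$, $|\psi_t|$ are each controlled by $\psi^\theta$ times the geometric quantities $\rho^{-2}+ (\rho(R-\rho))^{-1}+\rho^{-1}\sqrt{k_+}+\delta^{-1}$ — but wait, on the support of $\mathscr{B}_1$ one has $\bar\psi\equiv 1$ so the spatial derivatives of $\psi$ vanish and only $\phi'$ survives, giving a $\delta^{-1}$ term; one must be careful about which regime one is in, since Lemma~\ref{Dl3} asserts the bound for \emph{all} $x\in B(x_0,R-\rho)$ and so the maximizing point may also lie where $\bar\psi<1$ is irrelevant because $\bar\psi=1$ on $B(x_0,R-\rho)$. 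The cleanest route is: choose $\psi=\bar\psi(\mathrm{dist})\phi$ but evaluate at the max of $w\psi$ over the full cylinder $B(x_0,R)\times[t_0-T,t_0]$, get a bound on $\sup(w\psi)$, and then restrict to $\{\bar\psi=1\}=\overline{B(x_0,R-\rho)}$ to recover $w$ itself; there all the spatial cut-off terms $\rho^{-2}$, $(\rho(R-\rho))^{-1}$, $\rho^{-1}\sqrt{k_+}$ do contribute because the maximizing point need not lie in that sub-ball.

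After these substitutions one arrives, at $(\bar x,\bar t)$, at an inequality of the schematic shape
\begin{equation*}
A^2\le c_1\,A\Big(\tfrac1\rho+\tfrac1{\sqrt{\rho(R-\rho)}}+\tfrac{\sqrt[4]{k_+}}{\sqrt\rho}\Big)^2\cdot\text{(bounded)}+c_2\big(\mu+\gamma^{2/3}\big)+c_2\Big(\tfrac1{\rho^2}+\tfrac1{\rho(R-\rho)}+\tfrac{\sqrt{k_+}}\rho+\tfrac1\delta\Big),
\end{equation*}
where $A:=(w\psi)(\bar x,\bar t)$ and the constants depend only on $n,\eta,a_0,\kappa,\Gamma$. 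Absorbing the linear-in-$A$ term into $A^2$ by Young's inequality (note $\tfrac12 A^2\ge$ the linear term once $A$ is large, otherwise $A$ is already bounded by the geometric quantities) yields $A\le C(\mu+\gamma^{2/3}+\rho^{-2}+(\rho(R-\rho))^{-1}+\rho^{-1}\sqrt{k_+}+\delta^{-1})$. Combining the two alternatives (boundary versus interior maximum) gives $\sup(w\psi)\le \tau_u^2+C(\cdots)$, and then restricting to $\overline{B(x_0,R-\rho)}\times[t_0-T,t_0]$ where $\psi\ge \phi\ge$ — here one needs $\phi\equiv1$, so one should instead localize only in space for this lemma, i.e.\ take $\psi=\bar\psi(\mathrm{dist})$ with no time cut-off, since~\eqref{CA:0} is claimed on the \emph{whole} time interval $[t_0-T,t_0]$ and must see the initial data through $\tau_u$; then $\psi\equiv1$ on $\overline{B(x_0,R-\rho)}$ gives $w\le\sup(w\psi)$ directly. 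I expect the main obstacle to be precisely this bookkeeping of the cut-off: choosing $\psi$ so that the boundary term genuinely reduces to $\tau_u^2$ on the initial slice while the lateral boundary $\partial B(x_0,R)$ contributes nothing, and simultaneously keeping every derivative-of-$\psi$ term controlled by the stated combination of $\rho,R,k_+$; the application of the maximum principle and the Young/Cauchy--Schwarz absorptions are then routine given Lemma~\ref{Dl1} and the inequality~\eqref{DD15}.
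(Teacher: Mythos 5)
Your final plan---after discarding the time cut-off---is exactly the paper's proof of Lemma~\ref{Dl3}: take $\psi=\bar\psi(d(\cdot,x_0))$ in space only, maximize $\widetilde w=w\psi$ over the closed cylinder, observe that the lateral boundary contributes nothing since $\psi$ vanishes there, split according to whether the maximum occurs at $t_1=t_0-T$ (yielding the $\tau_u^2$ term) or at $t_1\in(t_0-T,t_0]$ (where $\widetilde w_t\ge0$ and \eqref{DD15}, the cut-off/Laplacian-comparison bounds \eqref{DD16}, the bounds through $\Gamma$ and $\eta$, and Young's inequality produce the $\mu+\gamma^{2/3}+\rho^{-2}+(\rho(R-\rho))^{-1}+\sqrt{k_+}/\rho$ terms), and finally restrict to $B(x_0,R-\rho)$ where $\psi\equiv1$. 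The only blemishes are the sign slip in your first paragraph (at such a maximum one has $(w\psi)_t\ge0$, which is what your second paragraph actually uses, and on the initial slice $w\le\tau_u^2$ rather than $=$) and the opening space--time cut-off, which, as you yourself noticed, would suppress the $\tau_u^2$ term and introduce a spurious $1/\delta$; with those corrections the argument coincides with the paper's.
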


\begin{proof}
Let~$\theta\in(0,1)$, to be conveniently chosen in what follows.
For every~$x\in B(x_0,R)$,
we define \begin{equation}\label{PSIDEFINI}\psi(x):=\bar{\psi}(d(x,x_0)),\end{equation}
where~$d(\cdot,\cdot)$ represents the geodesic distance
and~$\bar{\psi}$ is the function introduced in  Lemma~\ref{PSI}.

Recalling
the assumption~\eqref{RIC} on the Ricci curvature, we have that
\begin{equation*}
\Delta d(x,x_0)\le\frac{n-1}{d(x,x_0)}+\sqrt{(n-1)k_+}.
\end{equation*}
As a result, in view of~\eqref{si-a}, we deduce that
\begin{equation}\label{DD16}
\begin{split}&
|\nabla\psi(x)|= |\bar{\psi}'(d(x,x_0))\,\nabla d(x,x_0)|\le
\frac{C\big( \psi(x)\big)^{\theta}}\rho\\{\mbox{and }}\quad&-\Delta\psi(x)=
-\bar{\psi}'(d(x,x_0))\Delta d(x,x_0)-
\bar{\psi}''(d(x,x_0))|\nabla d(x,x_0)|\\&\qquad\qquad\le
\frac{C\big( \psi(x)\big)^{\theta}}\rho\,\left(\frac{n-1}{d(x,x_0)}+\sqrt{(n-1)k_+}\right)+
\frac{C\big( \psi(x)\big)^{\theta}}{\rho^2},
\end{split}\end{equation}
with~$C>0$ depending only on~$\theta$.

We now define~$\widetilde w:=w\psi$ and,
in the support of~$\psi$, we exploit~\eqref{DD15}
and write that
\begin{equation}\label{DD17}
\aligned
&\qquad\frac{ag'\Delta \widetilde w-\widetilde w_t}{2}-a\left\langle\nabla\widetilde w,\frac{g'\nabla\psi}{\psi}+\lambda\nabla g \right\rangle
\\&\ge\,\frac{a_{0}\kappa (\xi-g) w^2\psi}{4}
-a\lambda w
\left\langle \nabla\psi,\nabla g\right\rangle-\frac{C\mu^2\psi}{(\xi-g)}
-\frac{C\gamma^{4/3}\,\psi}{(\xi-g)^{5/3}}
\\&\qquad-\frac{ag'w|\nabla\psi|^{2}}{\psi}+\frac{ag'w\Delta\psi}{2}.
\endaligned
\end{equation}
We take~$(x_1, t_1)$ in the closure of~$Q_{R,T}$ such that
\begin{equation}\label{M7}
\sup_{Q_{R,T}}\widetilde w=\widetilde w(x_1, t_1).\end{equation}
Since~$\widetilde w(x,t)=0$ if~$x\in\partial B(x_0,R)$,
necessarily~$x_1$ is an interior point of~$B(x_0,R)$.
Consequently~$\nabla\widetilde w(x_1,t_1)=0$ and~$\Delta \widetilde w(x_1,t_1)\le0$.
Hence, inserting this information into~\eqref{DD17},
we obtain that
\begin{equation}\label{DD18}
\begin{split}&
0 \ge\,\frac{\widetilde w_t}{2}+\frac{a_{0}\kappa (\xi-g) w^2\psi}{4}
-a\lambda w
\left\langle \nabla\psi,\nabla g\right\rangle-\frac{C\mu^2\psi}{(\xi-g)}
-\frac{C\gamma^{4/3}\,\psi}{(\xi-g)^{5/3}}
\\&\qquad-\frac{ag'w|\nabla\psi|^{2}}{\psi}+\frac{ag'w\Delta\psi}{2}\Bigg|_{(x,t)=(x_1,t_1)}.
\end{split}
\end{equation}
Exploiting~\eqref{DEFg}, \eqref{101}, \eqref{DEFv} and~\eqref{DEFw},
we also see that
\begin{equation}\label{eaS}\begin{split}&
\frac{|F'(u(x,t))|^{2}|\nabla u(x,t)|^{2}}{|u(x,t)|^{2}(\xi-G(u(x,t)))^{2}}=
\frac{|g'(v(x,t))|^{2}|\nabla u(x,t)|^{2}}{|u(x,t)|^{2}(\xi-g(v(x,t)))^{2}}\\&\qquad=
\frac{|g'(v(x,t))|^{2}|\nabla v(x,t)|^{2}}{ (\xi-g(v(x,t)))^{2}}=
w(x,t).\end{split}
\end{equation}

Now, to address the proof of~\eqref{CA:0},
it is convenient to distinguish two cases, namely:
\begin{eqnarray}
\label{CA:1}&&{\mbox{either $t_1=t_0-T$,}}
\\&&{\mbox{or $t_1\in (t_0-T,t_0]$.}}\label{CA:2}
\end{eqnarray}
To start with, we suppose that~\eqref{CA:1} holds true.
In this case, we use~\eqref{sigtau},
\eqref{M7}
and~\eqref{eaS} to deduce that,
for every~$(x,t)\in Q_{R,T}$,
\begin{equation*}
\aligned
\widetilde w (x,t)&\leq \widetilde w(x_1,t_0-T)\\&\leq
\sup_{x\in B(x_0,R)} \widetilde w(x,t_0-T)\\&\leq\sup_{x\in B(x_0,R)}w(x,t_0-T)\\&=\sup_{x\in B(x_0,R)}\frac{\big(F'(u)\big)^{2}|\nabla u|^2}{u^2(\xi-G(u))^2}(x,t_0-T)
\\&\leq \tau_u^2.
\endaligned
\end{equation*}
In particular, for all~$(x,t)\in B(x_0,R-\rho) \times[t_0-T,t_0]$,
$$ w(x,t)=\widetilde w (x,t)\le\tau_u^2,$$
and this proves~\eqref{CA:0} in this case.

Hence, to complete the proof of~\eqref{CA:0}, we now consider the
case in which~\eqref{CA:2} is satisfied. Then, $\widetilde w_t(x_1,t_1)\ge0$,
and consequently~\eqref{DD18} entails that
\begin{equation}\label{DD19}
\begin{split}&
0 \ge\,\frac{a_{0}\kappa (\xi-g) w^2\psi}{4}
-a\lambda w
\left\langle \nabla\psi,\nabla g\right\rangle -\frac{C\mu^2\psi}{(\xi-g)}
-\frac{C\gamma^{4/3}\,\psi}{(\xi-g)^{5/3}}
\\&\qquad-\frac{ag'w|\nabla\psi|^{2}}{\psi}+\frac{ag'w\Delta\psi}{2}\Bigg|_{(x,t)=(x_1,t_1)}.
\end{split}
\end{equation}
It is also useful to observe that
$$ \frac{\sqrt{n}\,|g''(r)|}{g'(r)}=\frac{\sqrt{n}\,e^r\,|F''(e^r)|}{F'(e^r)}\le\frac{2F'(e^r)}{\xi-G(e^r)}=
\frac{2g'(r)}{\xi-g(r)},$$
thanks to \eqref{101} and~\eqref{CONDFOD}.

{F}rom this and the definition of~$\lambda$ in~\eqref{DEFlambda},
we conclude that
$$|\lambda(r)|\le\frac{2g'(r)}{\xi-g(r)}+1.$$
It is also useful to observe that, in light of \eqref{SUPER}, \eqref{DEFg}
and~\eqref{101},
\begin{equation}\label{Gamma}
\frac{2g'(r)}{\xi-g(r)}=\frac{2F'(e^r)}{\xi-G(e^r)}\le2\Gamma,
\end{equation}
and thus~$|\lambda(r)|\le2\Gamma+1$.

As a consequence, recalling the definition of~$w$ in~\eqref{DEFw},
and utilizing the
Young's inequality with exponents~$4/3$ and~$4$,
we see that, in the support of~$\psi$,
\begin{equation*}\begin{split}& a\left|
\lambda\,w\left\langle\nabla \psi,\nabla g\right\rangle\right|\le(2\Gamma+1)
a_{0}^{-1}w\,|\nabla \psi|\;|\nabla g|\\&\qquad=
(2\Gamma+1)
a_{0}^{-1}w^{3/2}\,|\nabla \psi|\;(\xi-g)
\\&\qquad=(2\Gamma+1)
\left(\frac{a_{0}^{3/4}\,[\kappa(\xi-g)]^{3/4}\,w^{3/2}\,\psi^{3/4}}{3}\right)
\;\frac{3a_{0}^{-7/4}(\xi-g)^{1/4}\,|\nabla \psi|}{ \kappa^{3/4}\psi^{3/4}}
\\&\qquad\le \frac18a_{0}\kappa\,(\xi-g)\,w^{2}\,\psi+\frac{C\;(\xi-g)\,|\nabla \psi|^{4}}{\psi^{3}}
\end{split}
\end{equation*}
for some constant~$C>0$
depending only on~$a_0$, $\kappa$ and~$\Gamma$.
{F}rom this and~\eqref{DD16}, we find that
\begin{equation}\label{DD20}
a \left|
\lambda\,w\left\langle\nabla \psi,\nabla g\right\rangle\right|\le
\frac18a_{0}\,(\xi-g)\kappa\,w^{2}\,\psi+\frac{C\;(\xi-g)\, \psi^{4\theta-3}}{\rho^{4}},\end{equation}
up to renaming~$C>0$, possibly also in dependence
of~$\theta$.

Also, by using again~\eqref{101}, \eqref{DD16}, \eqref{Gamma}
and the Cauchy-Schwarz inequality, we can write that
\begin{equation}\label{DD23}
\begin{split}&\frac{ag'w\,|\nabla\psi|^2}\psi\le
\frac{Ca^{-1}_{0}g'w\psi^{2\theta-1}}{\rho^2}=
a^{-1}_{0}g'\sqrt{\xi-g}\;w\;\sqrt{\psi}\;
\frac{C\psi^{2\theta-\frac32}}{\sqrt{\xi-g}\;\rho^2}
\\&\qquad\qquad\qquad\le\frac{1}{16}a_{0}
(\xi-g)\kappa w^2\psi+\frac{
C(g')^{2}\psi^{4\theta-3}}{a^{3}_{0}(\xi-g)\kappa\rho^4}
\\&\qquad\qquad\qquad\le\frac{1}{16}a_{0}
(\xi-g)\kappa w^2\psi+\frac{
C(\xi-g)\psi^{4\theta-3}}{\rho^4},
\end{split}\end{equation}
up to renaming~$C$.

Plugging this information and~\eqref{DD20} into~\eqref{DD19},
we obtain that, at the point~$(x_1,t_1)$,
\begin{equation}\label{DD21}
\frac{a_{0}\kappa (\xi-g) w^2\psi}{16}\leq\,\frac{C\mu^2\psi}{(\xi-g)}+\frac{C\gamma^{4/3}\,\psi}{(\xi-g)^{5/3}}
+\frac{C(\xi-g)\, \psi^{4\theta-3}}{\rho^{4}}
-\frac{ag'w\Delta\psi}{2}
.
\end{equation}
Moreover, by~\eqref{DD16},
using the short notation~$d:=d(x,x_0)$, we see that
\begin{eqnarray*}
-\frac{\Delta\psi\,w}2&\le&\frac{Cw}{2}\,\left[
\frac{\psi^{\theta}}{ \rho}\,\left(\frac{n-1}{d}+\sqrt{(n-1)k_+}\right)+
\frac{\psi^{\theta} }{ \rho^2}\right]\\
&=&
\frac{C\,\sqrt{\xi-g}\;w\;\sqrt{\psi}}{2}\,\left[
\rho\,\left(\frac{n-1}{d}+\sqrt{(n-1)k_+}\right)+1
\right]\frac{\psi^{{\theta}-\frac12} }{ \sqrt{\xi-g}\;\rho^2}.
\end{eqnarray*}
Consequently, by the Cauchy-Schwarz inequality and~\eqref{Gamma},
\begin{equation}\label{Qua4d}
\aligned
-\frac{ag'\Delta\psi\,w}2
 &\le\frac{a_{0}\kappa(\xi-g) w^2\psi}{32}+C\left[
 \rho\,\left(\frac{n-1}{d}+\sqrt{(n-1)k_+}\right)+1
 \right]^2\frac{a^{-3}_{0}(g')^{2}\psi^{2\theta-1} }{ (\xi-g)\kappa\rho^4}\\
&\leq\frac{a_{0}\kappa(\xi-g) w^2\psi}{32}+C\left[
\rho\,\left(\frac{n-1}{d}+\sqrt{(n-1)k_+}\right)+1
\right]^2\frac{\psi^{2\theta-1} (\xi-g)}{\rho^4}
\endaligned
,\end{equation}
up to renaming~$C$.

We also remark that when~$x\in B(x_0,R-\rho)$, we have that~$d=d(x,x_0)\in[0,R-\rho)$,
and thus~$\frac{R-d}{\rho}>1$, which gives that~$
\psi(x)=\bar{\psi}(d)=\alpha\left(\frac{R-d}{\rho}\right)=1$.
In particular,
\begin{equation} \label{0987y0iuj0098j}\Delta\psi(x)=0\qquad{\mbox{ for all }}x\in B(x_0,R-\rho).\end{equation}
Notice also that
\begin{equation}\label{last-A}
\aligned
-\frac{ag'\Delta\psi\,w}2&\le
\frac{a_{0}\kappa(\xi-g) w^2\psi}{32}+C\left[
\rho\,\left(\frac{n-1}{R-\rho}+\sqrt{(n-1)k_+}\right)+1
\right]^2\frac{\psi^{2\theta-1} (\xi-g) }{\rho^4}
\\&\le
\frac{a_{0}\kappa(\xi-g) w^2\psi}{32}+
\frac{C\psi^{2\theta-1} (\xi-g) }{\rho^2(R-\rho)^2}
+\frac{C k_+\psi^{2\theta-1} (\xi-g) }{\rho^2}+
\frac{C\psi^{2\theta-1}(\xi-g) }{\rho^4},
\endaligned
\end{equation}
up to renaming~$C$. Indeed, the estimate in~\eqref{last-A}
is obvious in~$B(x_0,R-\rho)$,
since the right hand side vanishes, thanks to~\eqref{0987y0iuj0098j},
and it follows directly
from~\eqref{Qua4d}
in the complement of~$B(x_0,R-\rho)$,
where~$d\ge R-\rho$.

We can therefore insert~\eqref{last-A} into~\eqref{DD21}
and find that, at the point~$(x_1,t_1)$,
\begin{equation*}
\aligned
\frac{a_{0}\kappa (\xi-g) w^2\psi}{32}\leq\,&\frac{C\mu^2\psi}{(\xi-g)}+\frac{C\gamma^{4/3}\,\psi}{(\xi-g)^{5/3}}
+\frac{
C\psi^{4\theta-3}(\xi-g)}{\rho^4}
\\&\qquad+\frac{C\psi^{2\theta-1}(\xi-g) }{\rho^2(R-\rho)^2}
+\frac{C k_+\psi^{2\theta-1}(\xi-g)}{\rho^2}+
\frac{C\psi^{2\theta-1} (\xi-g) }{\rho^4}.
\endaligned
\end{equation*}
In light of~\eqref{M7},
we can rewrite the latter estimate as
\begin{equation}\label{DD29}\begin{split}
\sup_{Q_{R,T}}w^2 \psi^2 &\le
\frac{C\mu^2\psi^{2}}{(\xi-g)^{2}}+\frac{C\gamma^{4/3}\,\psi^{2}}{(\xi-g)^{8/3}}
+\frac{
C\psi^{4\theta-2}}{\rho^4}
+\frac{C\psi^{2\theta} }{\rho^2(R-\rho)^2}
+\frac{C k_+\psi^{2\theta} }{\rho^2}+
\frac{C\psi^{2\theta} }{\rho^4}.\end{split}
\end{equation}
We recall that~$0\le\psi\le1$ and that~$\psi=1$
in~$B(x_0,R-\rho)$. In this way, choosing $\theta:=1/2$,
we deduce from~\eqref{DD29} and~\eqref{XIPOS} that
\begin{equation*}\begin{split}&\sup_{B(x_0,R-\rho)
\times(t_0-T,t_0]
}w^2 =
\sup_{B(x_0,R-\rho)
\times(t_0-T,t_0]
}w^2 \psi^2 \\&\qquad\qquad\le
C\mu^2
+C\gamma^{4/3}+\frac{
C}{\rho^4}+\frac{C }{\rho^2(R-\rho)^2}
+\frac{C k_+ }{\rho^2}
,\end{split}
\end{equation*}
where~$C$ can now depend on~$\eta$ too.
With this, the proof of~\eqref{CA:0} is complete.\end{proof}

\begin{lem}\label{Dl4}
In the setting of Theorem~\ref{D2}, if~$x\in B(x_0,R)$ and~$t\in [t_0-T+\delta,t_0]$,
\begin{equation}\label{C3456dffA:0}\begin{split}
&w(x,t)\le\Bigg[\sigma_u^2+C\bigg(\mu+\gamma^{2/3}
+\frac{1}{\delta}\bigg)\Bigg],\end{split}
\end{equation}
for some~$C>0$, depending only on~$\eta$, $a_0$ and $\kappa$.
Here, $\sigma_u$, $\mu$ and~$\gamma$ are the quantities defined in~\eqref{sigtau},
\eqref{DEFMU22} and~\eqref{DEFMU33}, respectively.
\end{lem}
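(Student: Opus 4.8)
The plan is to run the very same scheme as in Lemma~\ref{Dl3}, but now localizing in time rather than in space. First I would fix~$\theta\in(0,1)$, take the temporal cut-off~$\phi$ from Lemma~\ref{ilLECAS2M}, and apply the master inequality~\eqref{DD15} with~$\psi(x,t):=\phi(t)$. Since~$\psi$ depends on~$t$ alone, we have~$\nabla\psi\equiv0$ and~$\Delta\psi\equiv0$, so all the terms of~\eqref{DD15} featuring~$\nabla\psi$, $\Delta\psi$ or~$1/\psi$ drop out (and, as~$\nabla\psi\equiv0$, the derivation of~\eqref{DD15} itself requires no division by~$\psi$, so the inequality is valid on all of~$Q_{R,T}$, even where~$\phi$ vanishes). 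What remains is
$$\frac{ag'\Delta(w\psi)-(w\psi)_t}{2}-a\langle\nabla(w\psi),\lambda\nabla g\rangle\ \ge\ \frac{a_0\kappa(\xi-g)w^2\psi}{4}-\frac{C\mu^2\psi}{\xi-g}-\frac{C\gamma^{4/3}\psi}{(\xi-g)^{5/3}}-\frac{\psi_t\,w}{2}.$$
This is the only structural input needed, and the absence of the~$\nabla\psi$-terms is precisely why the final constant here can depend only on~$a_0$, $\kappa$, $\eta$, and not on~$n$ or~$\Gamma$ as in Lemma~\ref{Dl3}.

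Next I would set~$\widetilde w:=w\psi$, pick a point~$(x_1,t_1)$ in the closure of~$Q_{R,T}$ at which~$\widetilde w$ attains its supremum, and distinguish three cases according to the location of~$(x_1,t_1)$. If~$x_1\in\partial B(x_0,R)$, then using the identity~\eqref{eaS} (which rewrites~$w$ as~$(F'(u)|\nabla u|/(u(\xi-G(u))))^2$), the definition of~$\sigma_u$ in~\eqref{sigtau}, and~$0\le\psi\le1$, one gets at once~$\widetilde w(x_1,t_1)\le w(x_1,t_1)\le\sigma_u^2$. If~$x_1\in B(x_0,R)$ but~$t_1=t_0-T$, then~$\psi(t_1)=\phi(t_0-T)=0$, so~$\widetilde w(x_1,t_1)=0$. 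In the remaining case~$x_1$ is an interior spatial maximum and~$t_1\in(t_0-T,t_0]$, hence~$\nabla\widetilde w(x_1,t_1)=0$, $\Delta\widetilde w(x_1,t_1)\le0$, and~$\widetilde w_t(x_1,t_1)\ge0$ (a one-sided derivative suffices if~$t_1=t_0$); substituting these into the collapsed inequality makes its left-hand side~$\le0$ at~$(x_1,t_1)$.

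The core of the argument is this interior case. From the resulting inequality~$\tfrac14 a_0\kappa(\xi-g)w^2\psi\le \tfrac{C\mu^2\psi}{\xi-g}+\tfrac{C\gamma^{4/3}\psi}{(\xi-g)^{5/3}}+\tfrac12|\psi_t|\,w$ at~$(x_1,t_1)$, I would estimate the last term via~\eqref{si-a-t}, which gives~$|\psi_t|=|\phi'(t)|\le C\delta^{-1}\psi^{(1+\theta)/2}$, and then a Young/Cauchy--Schwarz splitting~$\psi^{(1+\theta)/2}=\sqrt\psi\cdot\psi^{\theta/2}$ to absorb a multiple of~$a_0\kappa(\xi-g)w^2\psi$ into the left-hand side, leaving a remainder of the shape~$C\psi^{\theta}/((\xi-g)\delta^2)$. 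Multiplying through by~$8\psi/(a_0\kappa(\xi-g))$ and using~$0\le\psi\le1$ and~$\xi-g\ge\eta$ from~\eqref{xig}, one obtains~$(\widetilde w(x_1,t_1))^2\le C(\mu^2+\gamma^{4/3}+\delta^{-2})$ with~$C=C(a_0,\kappa,\eta)$. Taking square roots and combining the three cases yields~$\sup_{Q_{R,T}}\widetilde w\le\sigma_u^2+C(\mu+\gamma^{2/3}+\delta^{-1})$; finally, since~$\psi\equiv1$ on~$B(x_0,R)\times[t_0-T+\delta,t_0]$ by~\eqref{7132r3yhsjdsid-t}, on that set~$w=\widetilde w\le\sup_{Q_{R,T}}\widetilde w$, which is exactly~\eqref{C3456dffA:0}.

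I do not expect a serious obstacle: once~\eqref{DD15} is available, the computation is essentially routine. The one genuinely new point compared with Lemma~\ref{Dl3} is that~$\widetilde w$ no longer vanishes on the lateral boundary~$\partial B(x_0,R)\times[t_0-T,t_0]$, so the case~$x_1\in\partial B(x_0,R)$ must be treated on its own, and it is there that the boundary datum~$\sigma_u$ enters the bound. The only other thing to watch is the bookkeeping in Young's inequality, keeping the exponent of~$\psi$ in the remainder nonnegative (hence~$\le1$) so that the constant depends only on~$a_0$, $\kappa$ and~$\eta$.
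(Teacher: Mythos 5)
Your proposal is correct and follows essentially the same route as the paper: apply the master inequality \eqref{DD15} with the purely temporal cut-off $\phi$ (so the $\nabla\psi$, $\Delta\psi$ and $1/\psi$ terms disappear), evaluate at a maximum point of $\widetilde w=w\phi$, treat the lateral-boundary case via $\sigma_u$ and the interior case via \eqref{si-a-t} plus a Young/Cauchy--Schwarz absorption, and conclude on $B(x_0,R)\times[t_0-T+\delta,t_0]$ where $\phi\equiv1$. Your extra remarks (the vanishing of $\phi$ at $t=t_0-T$ being harmless since no division by $\psi$ is needed, and the constant depending only on $a_0$, $\kappa$, $\eta$) are consistent with the paper's argument.
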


\begin{proof} We take~$\phi$ as in Lemma~\ref{ilLECAS2M} (say, with~$\theta :=1/2$), and we define~$\widetilde w(x,t):=w(x,t)\phi(t)$. Then, in light of~\eqref{DD15},
\begin{equation}\label{DD31}
\frac{ag'\Delta \widetilde w-\widetilde w_t}{2}-a\left\langle\nabla\widetilde w,\lambda\nabla g \right\rangle
\ge\,\frac{a_{0}\kappa (\xi-g) w^2\phi}{4}
-\frac{C\mu^2\phi}{(\xi-g)}
-\frac{C\gamma^{4/3}\,\phi}{(\xi-g)^{5/3}}-\frac{\phi_{t}w}{2}.
\end{equation}
Suppose that the maximum of $\widetilde w$ in the closure of~$Q_{R,T}$ is reached at $(x_1, t_1)$.
Since~$\widetilde w=0$ when~$t=t_0-T$, we know that~$t_1\in(t_0-T,t_0]$.
As a result,
\begin{equation}\label{DD32}
\widetilde w_t(x_1,t_1)\ge0.
\end{equation}
We then distinguish two cases,
\begin{eqnarray}
&&\label{SPALS:1}{\mbox{either }}x_1\in\partial B(x_0,R),\\
&&\label{SPALS:2}{\mbox{or }}
x_1\in B(x_0,R).
\end{eqnarray}
If~\eqref{SPALS:1} holds true, then, in~$ Q_{R,T}$,
\begin{equation*}
\widetilde w\leq \widetilde w(x_1,t_1)\leq
\sup_{{x\in\partial B(x_0,R)}\atop{t\in[t_0-T,t_0]}}\widetilde w(x,t)\leq\sup_{{x\in\partial B(x_0,R)}\atop{t\in[t_0-T,t_0]}}w(x,t).
\end{equation*}
Consequently, recalling the definition of~$w$ in~\eqref{DEFw} and using~\eqref{100} and~\eqref{BB},
we have that
\begin{equation*}
\widetilde w\le\sup_{{x\in\partial B(x_0,R)}\atop{t\in[t_0-T+\delta/2,t_0]}}\frac{\big(F'(u)\big)^{2}|\nabla u|^2}{u^2(\xi-G(u))^2}(x,t)=\sigma_u^2.
\end{equation*}
As a result, since~$\phi=1$ if~$t\ge t_0-T+\delta$, thanks to~\eqref{7132r3yhsjdsid-t},
we obtain that, if~$x\in B(x_0,R)$ and~$t\in[t_0-T+\delta, t_0]$,
$$ w(x,t)=\widetilde w(x,t)\le\sigma_u^2,$$
which proves~\eqref{C3456dffA:0} in this case.

Hence, we can now suppose that~\eqref{SPALS:2}
holds true. In this case, we have that~$\Delta \widetilde w(x_1,t_1)\leq0$ and~$\nabla \widetilde w(x_1,t_1)=0$.
Therefore, in the light of~\eqref{101}, \eqref{DD15} and~\eqref{DD32},
at the point~$(x_1,t_1)$ it holds that
\begin{equation}\label{DD33}
0\ge\,\frac{a_{0}\kappa (\xi-g) w^2\phi}{4}
-\frac{C\mu^2\phi}{(\xi-g)}
-\frac{C\gamma^{4/3}\,\phi}{(\xi-g)^{5/3}}-\frac{\phi_{t}w}{2}.
\end{equation}
Moreover, from~\eqref{si-a-t} and the Cauchy-Schwarz inequality,
\begin{equation}\label{DD35}
\begin{split}
 &\frac{\phi_t\,w}2\le
\frac{C\,\phi^{\frac{1+\theta}2}\,w}{2\delta}
=\left(\frac{\sqrt{a_0 \kappa (\xi-g)}\;w\;\sqrt{\phi}}2\right)\;
\left(\frac{C\phi^{\frac{\theta}2}}{\delta\sqrt{a_0 \kappa (\xi-g)}}\right)
\\&\qquad\le
\frac{a_{0}\kappa(\xi-g) w^2\phi}8+
\frac{C\phi^{\theta}}{\delta^2a_{0}\kappa(\xi-g)}\le
\frac{a_{0}\kappa(\xi-g) w^2\phi}8+
\frac{C\phi^{\theta}}{\delta^2(\xi-g)}.
\end{split}
\end{equation}
Plugging~\eqref{DD35} into~\eqref{DD33}, we conclude that, at the point~$(x_1,t_1)$,
$$\frac18\,
(\xi-g)a_{0}\kappa\,w^2 \phi\le
\frac{C\mu^2\phi}{{(\xi-g)}}+\frac{C\gamma^{4/3}\,\phi}{(\xi-g)^{5/3}}+\frac{C\phi^{\theta}}{\delta^2(\xi-g)} .$$
That is, at the point~$(x_1,t_1)$,
$$
w^2 \phi\le \frac{C\mu^2\phi}{{(\xi-g)^{2}}}+
\frac{C\gamma^{4/3}\,\phi}{(\xi-g)^{8/3}}+\frac{C\phi^{\theta}}{\delta^2(\xi-g)^{2}}.$$
Now, since~$0\le\phi\le1$ and~$\phi=1$ for any~$t\ge t_0-T+\delta$, this
implies that
$$ \sup_{B(x_0,R)\times [t_0-T+\delta,t_0]}w^2=
\sup_{B(x_0,R)\times [t_0-T+\delta,t_0]}w^2\phi^2\le
\frac{C\mu^2}{(\xi-g)^2}+\frac{C}{\delta^2(\xi-g)^2}+
\frac{C\gamma^{4/3}}{(\xi-g)^{8/3}}
\Bigg|_{(x,t)=(x_1,t_1)}.
$$
As a consequence, recalling also~\eqref{XIPOS},
we obtain~\eqref{C3456dffA:0}, as desired.
\end{proof}

\begin{lem}\label{Dl5}
In the setting of Theorem~\ref{D2}, if~$x\in B(x_0,R)$ and~$t\in [t_0-T,t_0]$,
\begin{equation}\label{C3456543665888885yhdffA:0}
w(x,t)\le[\sigma_u^2+\tau_u^2+C(\mu+\gamma^{2/3})],
\end{equation}
for some~$C>0$, depending only on~$\eta$, $a_0$ and~$\kappa$.
Here, $\tau_u$ and~$\sigma_u$
are the quantities defined in~\eqref{sigtau},
$\mu$
is defined in~\eqref{DEFMU22}, and~$\gamma$ is defined
in~\eqref{DEFMU33}.
\end{lem}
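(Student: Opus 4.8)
The plan is to run the same maximum principle argument as in Lemmata~\ref{Dl3} and~\ref{Dl4}, but in the degenerate regime in which \emph{no localization is required}: one simply feeds the auxiliary inequality~\eqref{DD15} the trivial weight $\psi\equiv1$. Since $\psi\equiv1$ is smooth and positive, this choice is admissible, and because then $\nabla\psi=0$, $\Delta\psi=0$ and $\psi_t=0$, all the $\psi$-dependent remainders on the right-hand side of~\eqref{DD15} disappear. What survives is the clean differential inequality
\[
\frac{ag'\Delta w-w_t}{2}-a\langle\nabla w,\lambda\nabla g\rangle\ \ge\ \frac{a_0\kappa(\xi-g)w^2}{4}-\frac{C\mu^2}{\xi-g}-\frac{C\gamma^{4/3}}{(\xi-g)^{5/3}}
\]
throughout $Q_{R,T}$, with $C>0$ depending only on $a_0$ and $\kappa$.

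Next I would let $(x_1,t_1)$ be a maximum point of $w$ on the compact set $\overline{B(x_0,R)}\times[t_0-T,t_0]$ (which exists, as $w$ is continuous since $u$ is smooth and $\xi-g=\xi-G(u)\ge\eta>0$ by~\eqref{XIPOS}), and split into three cases according to where $(x_1,t_1)$ lies. If $x_1\in\partial B(x_0,R)$, the identity~\eqref{eaS} identifies $w$ with $\big(F'(u)|\nabla u|/(u(\xi-G(u)))\big)^2$, so $w(x_1,t_1)\le\sigma_u^2$ by the definition of $\sigma_u$ in~\eqref{sigtau}. If $x_1\in B(x_0,R)$ and $t_1=t_0-T$, the same identity together with the definition of $\tau_u$ gives $w(x_1,t_1)\le\tau_u^2$. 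In the remaining case, $x_1\in B(x_0,R)$ is an interior point and $t_1\in(t_0-T,t_0]$, so the first- and second-order conditions at the maximum yield $\nabla w=0$, $\Delta w\le0$ and $w_t\ge0$ at $(x_1,t_1)$; since $ag'>0$ by~\eqref{a0bound} and~\eqref{101}, the left-hand side of the displayed inequality is then nonpositive there, and rearranging and using $\xi-g\ge\eta$ produces $w^2(x_1,t_1)\le C(\mu^2+\gamma^{4/3})$, hence $w(x_1,t_1)\le C(\mu+\gamma^{2/3})$, where $C$ is now also allowed to depend on $\eta$.

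Finally, since $(x_1,t_1)$ realizes the maximum of $w$ over $\overline{Q_{R,T}}$, in every case $w(x,t)\le w(x_1,t_1)\le\sigma_u^2+\tau_u^2+C(\mu+\gamma^{2/3})$ for all $(x,t)\in Q_{R,T}$, which is exactly~\eqref{C3456543665888885yhdffA:0}. I do not expect any serious obstacle here: the substantial work has already been done in establishing~\eqref{DD15}, and the only points needing a little care are the case distinction at the maximum point — in particular, verifying that $w_t\ge0$ holds there even when $t_1=t_0$ is the right endpoint (where it is the one-sided first-order condition rather than $w_t=0$) — and checking that the boundary and initial bounds are read off correctly through~\eqref{eaS}.
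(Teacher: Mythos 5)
Your proof is correct and follows essentially the same route as the paper: a maximum principle argument for the unweighted function $w$ on $\overline{Q_{R,T}}$, with the same three-case splitting (spatial boundary, initial time, interior point) and the boundary/initial cases read off via~\eqref{eaS} and~\eqref{sigtau}. The only cosmetic difference is that you invoke~\eqref{DD15} with $\psi\equiv1$, whereas the paper applies Lemma~\ref{Dl1} directly and then inserts~\eqref{DD25} and~\eqref{DD27} with $\psi:=1$ — which is exactly how~\eqref{DD15} was obtained, so the two are equivalent.
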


\begin{proof}
We suppose that the maximum of $w$ in the closure of~$Q_{R,T}$ is reached at the point~$(x_1, t_1)$.  We distinguish three possibilities:
\begin{eqnarray}
\label{CA:3-001} &&{\mbox{either }} x_1\in B(x_0,R) {\mbox{ and }}t_1\in(t_0-T,t_0]
,\\
\label{CA:3-002} &&{\mbox{or }} x_1\in B(x_0,R) {\mbox{ and }}t_1=t_0-T
,\\
\label{CA:3-003} &&{\mbox{or }} x_1\in \partial B(x_0,R) {\mbox{ and }}t_1\in[t_0-T,t_0].
\end{eqnarray}
Suppose first that~\eqref{CA:3-001} holds true. Then, we have that~$\Delta w(x_1,t_1)\le0$,
$\nabla w(x_1,t_1)=0$ and~$w_t(x_1,t_1)\ge0$. Therefore, in light of Lemma~\ref{Dl1}
and recalling also~\eqref{101},
we obtain that, at the point~$(x_1,t_1)$,
\begin{equation}\label{DD36}
0\ge a \kappa (\xi-g) w^2-\mu w-\frac{\gamma\,|\nabla g|}{(\xi-g)^2}
\ge a_0 \kappa (\xi-g) w^2-\mu w-\frac{\gamma\,|\nabla g|}{(\xi-g)^2}.
\end{equation}
We insert~\eqref{DD25} and~\eqref{DD27}
(used here with~$\psi:=1$)
into~\eqref{DD36} to see that, at the point~$(x_1,t_1)$,
\begin{equation*}
\frac{a_{0}\kappa (\xi-g) w^2}{4}\leq
\frac{C\mu^2}{(\xi-g)}
+\frac{C\gamma^{4/3}}{(\xi-g)^{5/3}}.
\end{equation*}
Consequently, using the maximality of~$(x_1,t_1)$ and recalling
that~$\xi-g\geq\eta>0$,
$$ \sup_{B(x_0,R)\times [t_0-T,t_0]}w^2\le
\frac{C\mu^2}{{(\xi-g)^{2}}}
+\frac{C\gamma^{4/3}}{(\xi-g)^{8/3}}
\Bigg|_{(x,t)=(x_1,t_1)}
\le C\mu^2+C\gamma^{4/3}.$$
This proves~\eqref{C3456543665888885yhdffA:0} in this case.
Thus, we can now assume that~\eqref{CA:3-002} is satisfied. Then, recalling~\eqref{DEFw},
\eqref{100} and~\eqref{BB},
we see that,
in~$Q_{R,T}$,
\begin{equation*} \begin{split}&
w\le w(x_1,t_0-T)=\frac{\big(F'(u)\big)^{2}|\nabla v|^2}{\xi-G(u)^2}(x_1,t_0-T)
\\&\qquad\qquad=
\frac{\big(F'(u)\big)^{2}|\nabla u|^2}{u^2(\xi-G(u))^2}(x_1,t_0-T)\le\tau_u^2,
\end{split}\end{equation*}
which establishes~\eqref{C3456543665888885yhdffA:0} in this case.

We now suppose that~\eqref{CA:3-003} is satisfied.
In such a case, we have that,
in~$Q_{R,T}$,
$$
w\le w(x_1,t_1)=\frac{\big(F'(u)\big)^{2}|\nabla u|^2}{u^2(\xi-G(u))^2}(x_1,t_1)\le\sigma_u^2,$$
whence the proof of~\eqref{C3456543665888885yhdffA:0} is complete.
\end{proof}

\begin{lem}\label{Dl6}
In the setting of Theorem~\ref{D2}, if~$x\in B(x_0,R-\rho)$ and~$t\in [t_0-T+\delta,t_0]$,
\begin{equation}\label{C3123456757374254777456dffA:0}
w(x,t)\le C\,\left(\mu+\gamma^{2/3}+ \frac{1}{\rho(R-\rho)}+
\frac{\sqrt{k_+}}{\rho}
+\frac{1}{\delta}+\frac{1}{\rho^2}
\right),
\end{equation}
for some~$C>0$, depending only on~$\eta$, $a_0$, $\kappa$ and~$\Gamma$.
Here, $\mu$ and~$\gamma$ are the quantities defined in~\eqref{DEFMU22} and~\eqref{DEFMU33}, respectively.
\end{lem}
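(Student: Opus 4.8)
The plan is to combine the spatial localization of Lemma~\ref{Dl3} with the temporal localization of Lemma~\ref{Dl4}, cutting off near $\partial B(x_0,R)$ and near $\{t=t_0-T\}$ at the same time. Concretely, I would set
\[
\psi(x):=\bar\psi(d(x,x_0)),\qquad \Psi(x,t):=\psi(x)\,\phi(t),
\]
where $\bar\psi$ is the function of Lemma~\ref{PSI} and $\phi$ that of Lemma~\ref{ilLECAS2M}, both taken with $\theta:=1/2$. Since $\psi$ does not depend on $t$ and $\phi$ does not depend on $x$, one has $\nabla\Psi=\phi\,\nabla\psi$, $\Delta\Psi=\phi\,\Delta\psi$ and $\Psi_t=\psi\,\phi_t$, so that no mixed space--time derivative terms arise; hence~\eqref{DD15}, applied with $\psi$ there replaced by $\Psi$ (legitimate on the open set $\{\Psi>0\}$, i.e.\ where $d(\cdot,x_0)<R$ and $t>t_0-T$), yields a differential inequality for $\widetilde w:=w\,\Psi$.

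I would then take $(x_1,t_1)$ in the closure of $Q_{R,T}$ realizing $\sup\widetilde w$. If this supremum vanishes, then $w\equiv0$ on $B(x_0,R-\rho)\times[t_0-T+\delta,t_0]$ and~\eqref{C3123456757374254777456dffA:0} is trivial; otherwise $(x_1,t_1)$ can lie neither on $\partial B(x_0,R)\times[t_0-T,t_0]$ (where $\psi=0$) nor on $B(x_0,R)\times\{t_0-T\}$ (where $\phi=0$), so $x_1\in B(x_0,R)$ and $t_1\in(t_0-T,t_0]$. Consequently $\nabla\widetilde w(x_1,t_1)=0$, $\Delta\widetilde w(x_1,t_1)\le0$ and $\widetilde w_t(x_1,t_1)\ge0$ (one-sidedly if $t_1=t_0$, and via the standard Calabi argument if $x_1$ lies on the cut locus of $x_0$). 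Substituting into~\eqref{DD15} and discarding the non-positive left-hand side, I would get, at $(x_1,t_1)$,
\[
0\ \ge\ \frac{a_{0}\kappa(\xi-g)w^{2}\Psi}{4}
-a\lambda w\,\phi\,\langle\nabla\psi,\nabla g\rangle
-\frac{C\mu^{2}\Psi}{\xi-g}
-\frac{C\gamma^{4/3}\Psi}{(\xi-g)^{5/3}}
-\frac{ag'w\,\phi\,|\nabla\psi|^{2}}{\psi}
+\frac{ag'w\,\phi\,\Delta\psi}{2}
-\frac{\psi\,\phi_t\,w}{2}.
\]

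It then remains to estimate the four ``error'' terms, each a verbatim copy of a step already performed, now bearing a harmless extra factor $\phi$ or $\psi$: the term $a\lambda w\phi\langle\nabla\psi,\nabla g\rangle$ is treated as in~\eqref{DD20} (using $|\lambda|\le2\Gamma+1$, which follows from~\eqref{DEFlambda}, \eqref{CONDFOD} and~\eqref{SUPER}, the cut-off bound~\eqref{si-a}, and Young's inequality with exponents $4/3$ and $4$); the term $ag'w\phi|\nabla\psi|^{2}/\psi$ as in~\eqref{DD23} (using also~\eqref{Gamma}); the term $-ag'w\phi\Delta\psi/2$ as in~\eqref{Qua4d}--\eqref{last-A} (using~\eqref{DD16} and the fact that $\Delta\psi\equiv0$ on $B(x_0,R-\rho)$, which turns the $1/d$ factor into $1/(R-\rho)$ on the relevant set); and the new term $-\psi\phi_t w/2$ as in~\eqref{DD35} (using $0\le\phi_t\le C\phi^{(1+\theta)/2}/\delta$ from~\eqref{si-a-t} and Young's inequality with exponents $2$ and $2$). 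Choosing the four Young constants small enough that the absorbed multiples of $a_{0}\kappa(\xi-g)w^{2}\Psi$ together consume at most half of the coefficient $\tfrac14$, one would be left, at $(x_1,t_1)$, with
\[
\frac{a_{0}\kappa(\xi-g)w^{2}\Psi}{8}\ \le\
\frac{C\mu^{2}\Psi}{\xi-g}+\frac{C\gamma^{4/3}\Psi}{(\xi-g)^{5/3}}
+\frac{C(\xi-g)\phi\,\psi^{4\theta-3}}{\rho^{4}}
+\frac{C(\xi-g)\phi\,\psi^{2\theta-1}}{\rho^{2}(R-\rho)^{2}}
+\frac{C(\xi-g)k_+\phi\,\psi^{2\theta-1}}{\rho^{2}}
+\frac{C(\xi-g)\phi\,\psi^{2\theta-1}}{\rho^{4}}
+\frac{C\psi\,\phi^{\theta}}{\delta^{2}(\xi-g)},
\]
and then, exactly as in the passage from~\eqref{DD21} to~\eqref{DD29}, I would divide by $(\xi-g)$ and multiply by $\Psi$: the left-hand side becomes $\tfrac18\,w^{2}\Psi^{2}|_{(x_1,t_1)}=\tfrac18\sup_{Q_{R,T}}(w\Psi)^{2}$, while on the right every term picks up a factor $\Psi/(\xi-g)$; using $0\le\psi,\phi\le1$, $\xi-g\ge\eta$ by~\eqref{XIPOS}, and $\theta=1/2$ (so that $\psi^{4\theta-2}=\psi^{0}$ and $\psi^{2\theta}=\psi$), all powers of $\psi$ and $\phi$ become $\le1$ and
\[
\sup_{Q_{R,T}}(w\Psi)^{2}\ \le\
C\Big(\mu^{2}+\gamma^{4/3}+\frac{1}{\rho^{4}}+\frac{1}{\rho^{2}(R-\rho)^{2}}+\frac{k_+}{\rho^{2}}+\frac{1}{\delta^{2}}\Big),
\]
with $C$ depending only on $\eta$, $a_0$, $\kappa$ and $\Gamma$; since $\Psi=1$ on $B(x_0,R-\rho)\times[t_0-T+\delta,t_0]$, restricting to that set and taking square roots (via $\sqrt{a+b}\le\sqrt a+\sqrt b$) gives exactly~\eqref{C3123456757374254777456dffA:0}.

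I do not expect any conceptually new ingredient beyond Lemmata~\ref{Dl3} and~\ref{Dl4}; the main obstacle is the bookkeeping, namely keeping track of the exact powers of $\psi$ and $\phi$ through each Young/Cauchy--Schwarz step and verifying that, after the final multiplication by $\Psi/(\xi-g)$ and the choice $\theta=1/2$, each error collapses to the advertised clean quantity --- in particular that the spatial errors produce $\rho^{-4}$, $\rho^{-2}(R-\rho)^{-2}$ and $k_+\rho^{-2}$ (hence $\rho^{-2}$, $\rho^{-1}(R-\rho)^{-1}$ and $\sqrt{k_+}\,\rho^{-1}$ after the square root) while the temporal error produces $\delta^{-2}$ (hence $\delta^{-1}$) --- together with the small but essential observation that, since there are now four absorbed multiples of $a_{0}\kappa(\xi-g)w^{2}\Psi$ instead of three, the Young constants must be apportioned so that their sum stays strictly below the available coefficient.
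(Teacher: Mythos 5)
Your proposal is correct and follows essentially the same route as the paper's proof of Lemma~\ref{Dl6}: one multiplies the spatial cut-off $\psi$ of Lemma~\ref{PSI} by the temporal cut-off $\phi$ of Lemma~\ref{ilLECAS2M}, applies~\eqref{DD15} to $w\psi\phi$, argues at a maximum point (which cannot lie on the parabolic boundary), and reuses the absorption estimates~\eqref{DD20}, \eqref{DD23}, \eqref{last-A} and~\eqref{DD35} before invoking $\xi-g\ge\eta$. The only cosmetic deviation is your choice $\theta=1/2$ where the paper takes $\theta=3/4$; both give nonnegative powers of the cut-off after the final multiplication by $\Psi/(\xi-g)$, so this changes nothing.
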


\begin{proof} Let~$\theta\in(0,1)$ to be conveniently chosen in what follows.
Let also~$\psi$ be as in~\eqref{PSIDEFINI}
and~$\phi$ be as in Lemma~\ref{ilLECAS2M}. We define~$\Phi(x,t):=\psi(x)\phi(t)$ and~$\widetilde w:=w\Phi$.
Suppose that the maximum of $\widetilde w$ in $Q_{R,T}$ is reached at some point~$(x_1, t_1)$.
Since~$\Phi$ vanishes along the parabolic boundary, we know that~$x_1\in B(x_0,R)$
and~$t_1\in(t_0-T,t_0]$. As a consequence,
$$ \Delta\widetilde w(x_1,t_1)\le0,\qquad
\nabla\widetilde w(x_1,t_1)=0\qquad{\mbox{and}}\qquad\widetilde w_t(x_1,t_1)\ge0.$$
Combining this information with~\eqref{DD15}, we obtain that, at the point~$(x_1,t_1)$,
\begin{equation}\label{DD40}
\aligned
&0\ge\,\frac{a_{0}\kappa (\xi-g) w^2\Phi}{4}
-a\lambda w
\left\langle \nabla\Phi,\nabla g\right\rangle-\frac{C\mu^2\Phi}{(\xi-g)}
-\frac{C\gamma^{4/3}\,\Phi}{(\xi-g)^{5/3}}
\\&\qquad-\frac{ag'w|\nabla\Phi|^{2}}{\Phi}+\frac{(ag'\Delta\Phi-\Phi_{t})w}{2}\Bigg|_{(x,t)=(x_1,t_1)}.
\endaligned
\end{equation}
From~\eqref{DD20}, \eqref{DD23} and~\eqref{DD40}, we deduce that
\begin{equation}\label{part4A}
\aligned
&0\geq\,\frac{a_{0}\kappa (\xi-g) w^2\Phi}{16}-\frac{C\mu^2\Phi}{(\xi-g)}-\frac{C\gamma^{4/3}\,\Phi}{(\xi-g)^{5/3}}
-\frac{
C\Phi^{4\theta-3}(\xi-g)}{\rho^4}+\frac{(ag'\Delta\Phi-\Phi_{t})w}{2}\Bigg|_{(x,t)=(x_1,t_1)}.
\endaligned
\end{equation}
Now, from~\eqref{last-A},
\begin{equation}\label{part4-1}
\aligned
&\frac{-ag'w\Delta\Phi}{2}=\frac{-ag'w\phi\Delta\psi}{2}\\&\qquad\le
\frac{a_{0}\kappa(\xi-g) w^2\psi\phi}{32}+
\frac{C\psi^{2\theta-1}\phi (\xi-g)}{\rho^2(R-\rho)^2}
+\frac{C k_+\psi^{2\theta-1}\phi(\xi-g) }{ \rho^2}+
\frac{C\psi^{2\theta-1} \phi (\xi-g)}{\rho^4},
\endaligned
\end{equation}
and from~\eqref{DD35},
\begin{equation}\label{part4-2}
\frac{w\Phi_{t}}{2}=\frac{w\psi\phi_{t}}{2}\le
\frac{a_{0}\kappa(\xi-g) w^2\phi}{64}+
\frac{C\phi^{\theta}\psi}{\delta^2(\xi-g)}.
\end{equation}
{F}rom~\eqref{part4A}, \eqref{part4-1} and~\eqref{part4-2} we obtain that,
at the point~$(x_1,t_1)$,
\begin{eqnarray*}
\frac{a_0 \kappa (\xi-g)\,w^2 \Phi}{64} &\le&\frac{C\mu^2\Phi}{(\xi-g)}+\frac{C\gamma^{4/3}\,\Phi}{(\xi-g)^{5/3}}
+\frac{C\Phi^{4\theta-3}(\xi-g)}{\rho^4}+\frac{C\psi^{2\theta-1}\phi(\xi-g)}{\rho^2(R-\rho)^2}
\\&&+\frac{C k_+\psi^{2\theta-1}\phi (\xi-g)}{\rho^2}+
\frac{C\psi^{2\theta-1} \phi (\xi-g)}{\rho^4}+\frac{C\phi^{\theta}\psi}{\delta^2(\xi-g)}
.\end{eqnarray*}
We see that~$0\leq\Phi\leq1$, and that~$\Phi=1$
for every~$x\in B(x_0,R-\rho)$ and~$t\in [t_0-T+\delta,t_0]$. Thus, if~$x\in B(x_0,R-\rho)$
and~$t\in[t_0-T+\delta,t_0]$, choosing $\theta:=3/4$,
\begin{eqnarray*}
&&w^2(x,t)= w^2(x,t)\Phi^2(x,t)=\widetilde w^2(x,t)\le
\widetilde w^2(x_1,t_1)=
w^2(x_1,t_1)\Phi^2(x_1,t_1)\\&&
\qquad\le\frac{C\mu^2}{(\xi-g)^{2}}+\frac{C\gamma^{4/3}}{(\xi-g)^{8/3}}
+\frac{C}{\rho^{4}}+\frac{C}{\rho^2(R-\rho)^2}
+\frac{C k_+}{\rho^2}+\frac{C}{\delta^2(\xi-g)^{2}}\Bigg|_{(x,t)=(x_1,t_1)},
\end{eqnarray*}
that, recalling~\eqref{XIPOS}, yields the desired estimate in~\eqref{C3123456757374254777456dffA:0}.
\end{proof}

\section{Completion of the proof of Theorem~\ref{D2}}\label{KJS-M-D93sad45}

In this section, we provide the proof of Theorem~\ref{D2}.
To this end, we use the notation
\begin{equation}\label{TUTTITE}
\begin{split}
&{\widetilde{\mathscr{C}}}:=\mu+\gamma^{2/3},\\
&{\widetilde{\mathscr{T}}}:=\frac{1}{\delta}\\{\mbox{and }}\quad
&{\widetilde{\mathscr{S}}}:=\frac{1}{\rho^{2}}+\frac{1}{\rho(R-\rho)}+\frac{\sqrt{k_+}}{\rho}.
\end{split}
\end{equation}
With this notation, gathering together the estimates in Lemmata~\ref{Dl3},
\ref{Dl4}, \ref{Dl5} and~\ref{Dl6}, we obtain the following statement.

\begin{cor}\label{E67QRqwer3456788gfnh}
In the setting of Theorem~\ref{D2}, the function~$w$ can be
estimated by
\begin{eqnarray*}
C{\widetilde{\mathscr{C}}}+
\big(\tau_u^2+C{\widetilde{\mathscr{S}}}\big)&&
{\mbox{in~$ B(x_0,R-\rho)\times [t_0-T,t_0]$,}}\\C{\widetilde{\mathscr{C}}}+
\big(\sigma_u^2+ C{\widetilde{\mathscr{T}}}
\big)&&
{\mbox{in~$ B(x_0,R)\times [t_0-T+\delta,t_0]$,}}\\C{\widetilde{\mathscr{C}}}+
\big(\sigma_u^2+\tau_u^2
\big)&&
{\mbox{in~$B(x_0,R)\times[t_0-T,t_0]$,}}
\\C{\widetilde{\mathscr{C}}}+
C\,\big( {\widetilde{\mathscr{S}}}
+{\widetilde{\mathscr{T}}}
\big)&&{\mbox{in~$
B(x_0,R-\rho)\times [t_0-T+\delta,t_0]$,}}
\end{eqnarray*}
for some~$C>0$.
\end{cor}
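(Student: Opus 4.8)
The plan is to observe that Corollary~\ref{E67QRqwer3456788gfnh} is merely a transcription of Lemmata~\ref{Dl3}, \ref{Dl4}, \ref{Dl5} and~\ref{Dl6} into the compressed notation introduced in~\eqref{TUTTITE}, so that essentially no new argument is needed beyond matching terms region by region.

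First I would recall that, by~\eqref{TUTTITE}, one has exactly ${\widetilde{\mathscr{C}}}=\mu+\gamma^{2/3}$, ${\widetilde{\mathscr{T}}}=\delta^{-1}$ and ${\widetilde{\mathscr{S}}}=\rho^{-2}+(\rho(R-\rho))^{-1}+\sqrt{k_+}\,\rho^{-1}$. Then I would go through the four domains. On $B(x_0,R-\rho)\times[t_0-T,t_0]$, Lemma~\ref{Dl3} bounds $w$ by $\tau_u^2$ plus $C$ times $\mu+\gamma^{2/3}+\rho^{-2}+(\rho(R-\rho))^{-1}+\sqrt{k_+}\,\rho^{-1}$, that is, by $C{\widetilde{\mathscr{C}}}+\tau_u^2+C{\widetilde{\mathscr{S}}}$, which is the first line. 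On $B(x_0,R)\times[t_0-T+\delta,t_0]$, Lemma~\ref{Dl4} bounds $w$ by $\sigma_u^2+C(\mu+\gamma^{2/3}+\delta^{-1})=C{\widetilde{\mathscr{C}}}+\sigma_u^2+C{\widetilde{\mathscr{T}}}$, the second line. On $B(x_0,R)\times[t_0-T,t_0]$, Lemma~\ref{Dl5} gives $w\le\sigma_u^2+\tau_u^2+C(\mu+\gamma^{2/3})=C{\widetilde{\mathscr{C}}}+\sigma_u^2+\tau_u^2$, the third line. Finally, on $B(x_0,R-\rho)\times[t_0-T+\delta,t_0]$, Lemma~\ref{Dl6} bounds $w$ by $C$ times $\mu+\gamma^{2/3}+(\rho(R-\rho))^{-1}+\sqrt{k_+}\,\rho^{-1}+\delta^{-1}+\rho^{-2}={\widetilde{\mathscr{C}}}+{\widetilde{\mathscr{S}}}+{\widetilde{\mathscr{T}}}$, which is the fourth line.

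The only bookkeeping point is that the constants produced by the four lemmata depend a priori on possibly different subsets of $\{n,\eta,a_0,\kappa,\Gamma\}$; one simply takes $C$ to be the largest of the four, which still depends only on $n$, $\eta$, $a_0$, $\kappa$ and~$\Gamma$. Consequently there is no genuine obstacle in this step: the statement is a purely organisational repackaging of the four localisation lemmata already proved, designed to present their conclusions in the convenient form that will be used for the proof of Theorem~\ref{D2} in Section~\ref{KJS-M-D93sad45}.
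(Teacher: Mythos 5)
Your proposal is correct and coincides with the paper's own treatment: there the corollary is stated as an immediate consequence of ``gathering together'' Lemmata~\ref{Dl3}, \ref{Dl4}, \ref{Dl5} and~\ref{Dl6}, exactly the region-by-region matching with the notation~\eqref{TUTTITE} that you carry out. Your remark on taking the largest of the four constants is the only bookkeeping needed, and it is handled the same way implicitly in the paper.
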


Hence, considering the more convenient term in any common domain, we deduce
from Corollary~\ref{E67QRqwer3456788gfnh} that:

\begin{cor}
In the setting of Theorem~\ref{D2}, at any point in~$ Q_{R,T}$, we have that
\begin{equation}\label{C31234567xccgtt55d57374254777456dffA:0}\begin{split}
&w\le
C{\widetilde{\mathscr{C}}}+
\Big[\min\left\{\sigma_u^2+\tau_u^2,\,
\sigma_u^2+ C{\widetilde{\mathscr{T}}},\,
\tau_u^2+ C{\widetilde{\mathscr{S}}},\,
C({\widetilde{\mathscr{T}}}+{\widetilde{\mathscr{S}}})\right\}
\chi_{B(x_0,R-\rho)\times [t_0-T+\delta,t_0]}\\
&\qquad\qquad+\left(\sigma_u^2 +\min\left\{\tau_u^2,\,C{\widetilde{\mathscr{T}}}\right\}\right)\chi_{(B(x_0,R)\setminus B(x_0,R-\rho))\times [t_0-T+\delta,t_0]}\\
&\qquad\qquad+
\left(\tau_u^2 +\min\left\{\sigma_u^2,\,C{\widetilde{\mathscr{S}}}\right\}\right)
\chi_{B(x_0,R-\rho)\times [t_0-T,t_0-T+\delta]}\\
&\qquad\qquad+\left(\sigma_u^2 +\tau_u^2\right)\chi_{(B(x_0,R)\setminus B(x_0,R-\rho))\times [t_0-T,t_0-T+\delta]}
\Big],\end{split}
\end{equation}
for some~$C>0$.
\end{cor}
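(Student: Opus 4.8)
The plan is to read off~\eqref{C31234567xccgtt55d57374254777456dffA:0} from Corollary~\ref{E67QRqwer3456788gfnh} by a finite case analysis over the four cells into which the cut-off functions in~\eqref{161} decompose~$Q_{R,T}$. First I would restate the four bounds of Corollary~\ref{E67QRqwer3456788gfnh} (equivalently, of Lemmata~\ref{Dl3}, \ref{Dl4}, \ref{Dl5} and~\ref{Dl6}) in the shorthand of~\eqref{TUTTITE}: since $\mu+\gamma^{2/3}=\widetilde{\mathscr{C}}$, $1/\delta=\widetilde{\mathscr{T}}$ and $1/\rho^{2}+1/(\rho(R-\rho))+\sqrt{k_{+}}/\rho=\widetilde{\mathscr{S}}$, and after enlarging the finitely many constants occurring there to their maximum, one has $w\le\tau_u^{2}+C\widetilde{\mathscr{C}}+C\widetilde{\mathscr{S}}$ on $B(x_0,R-\rho)\times[t_0-T,t_0]$; $w\le\sigma_u^{2}+C\widetilde{\mathscr{C}}+C\widetilde{\mathscr{T}}$ on $B(x_0,R)\times[t_0-T+\delta,t_0]$; $w\le\sigma_u^{2}+\tau_u^{2}+C\widetilde{\mathscr{C}}$ on all of $Q_{R,T}$; and $w\le C\widetilde{\mathscr{C}}+C\widetilde{\mathscr{S}}+C\widetilde{\mathscr{T}}$ on $B(x_0,R-\rho)\times[t_0-T+\delta,t_0]$.

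Next I would observe that the supports of ${\mathscr{B}}_1$, ${\mathscr{B}}_2$, ${\mathscr{B}}_3$ and~${\mathscr{I}}$ in~\eqref{161} are pairwise disjoint and cover~$Q_{R,T}$ (the spatial factors $B(x_0,R-\rho)$ and $B(x_0,R)\setminus B(x_0,R-\rho)$ partition $B(x_0,R)$, while $[t_0-T,t_0-T+\delta)$ and $[t_0-T+\delta,t_0]$ partition $[t_0-T,t_0]$), so that it suffices to prove the claimed pointwise bound on each of the four cells, multiply by the corresponding characteristic function, and sum. On each cell I would keep only those of the four displayed inequalities whose domain contains that cell. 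The cell $B(x_0,R-\rho)\times[t_0-T+\delta,t_0]$ (support of~${\mathscr{I}}$) lies inside all four domains, so all four bounds apply and one takes their minimum, which is the first line inside the bracket of~\eqref{C31234567xccgtt55d57374254777456dffA:0}. The cell $(B(x_0,R)\setminus B(x_0,R-\rho))\times[t_0-T+\delta,t_0]$ (support of~${\mathscr{B}}_2$) is contained only in the second and third domains, giving $w\le C\widetilde{\mathscr{C}}+\sigma_u^{2}+\min\{C\widetilde{\mathscr{T}},\tau_u^{2}\}$; here $\widetilde{\mathscr{S}}$ cannot enter because the cell lies outside $B(x_0,R-\rho)$. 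The cell $B(x_0,R-\rho)\times[t_0-T,t_0-T+\delta)$ (support of~${\mathscr{B}}_1$) is contained only in the first and third domains, giving $w\le C\widetilde{\mathscr{C}}+\tau_u^{2}+\min\{C\widetilde{\mathscr{S}},\sigma_u^{2}\}$; here $\widetilde{\mathscr{T}}$ is absent since $t<t_0-T+\delta$ on the cell. Finally, the cell $(B(x_0,R)\setminus B(x_0,R-\rho))\times[t_0-T,t_0-T+\delta)$ (support of~${\mathscr{B}}_3$) meets only the third domain, giving $w\le C\widetilde{\mathscr{C}}+\sigma_u^{2}+\tau_u^{2}$.

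Assembling these four cell-wise estimates via $w=w\,({\mathscr{B}}_1+{\mathscr{B}}_2+{\mathscr{B}}_3+{\mathscr{I}})$ (using that these characteristic functions sum to~$1$ on~$Q_{R,T}$) yields exactly~\eqref{C31234567xccgtt55d57374254777456dffA:0}, the common summand $C\widetilde{\mathscr{C}}$ being pulled out of the bracket. There is essentially no analytic obstacle in this step: all of the substantive work was already done in Lemmata~\ref{Dl3}--\ref{Dl6}. The only point requiring care is the purely combinatorial bookkeeping, namely checking which of the four localized estimates is available on each cell, and verifying that the (half-open) conventions in~\eqref{161} make the decomposition of~$Q_{R,T}$ a genuine partition, so that no region is left uncovered and on each piece the minimum is taken solely over genuinely applicable bounds.
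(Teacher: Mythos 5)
Your proposal is correct and coincides with the paper's own argument: the paper deduces this corollary from Corollary~\ref{E67QRqwer3456788gfnh} (i.e., from Lemmata~\ref{Dl3}--\ref{Dl6}) precisely by "considering the more convenient term in any common domain", which is exactly your cell-by-cell case analysis over the four regions determined by~\eqref{161}, taking the minimum of the applicable bounds on each cell and pulling out the common term~$C\widetilde{\mathscr{C}}$.
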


\begin{proof}[Completion of the proof of Theorem~\ref{D2}]
Recalling~\eqref{eaS}, we write that
$$ w=\frac{(F'(u))^{2}|\nabla u|^2}{u^2(\xi-G(u))^{2}}.
$$
This and~\eqref{C31234567xccgtt55d57374254777456dffA:0} give that
\begin{equation*}\begin{split}
&
\frac{(F'(u))^2|\nabla u|^2}{u^2(\xi-G(u))^{2}}\le
C{\widetilde{\mathscr{C}}}+
\Big[\min\left\{\sigma_u^2+\tau_u^2,\,
\sigma_u^2+ C{\widetilde{\mathscr{T}}},\,
\tau_u^2+ C{\widetilde{\mathscr{S}}},\,
C({\widetilde{\mathscr{T}}}+{\widetilde{\mathscr{S}}})\right\}
\chi_{B(x_0,R-\rho)\times [t_0-T+\delta,t_0]}\\
&\qquad\qquad+\left(\sigma_u^2 +\min\left\{\tau_u^2,\,C{\widetilde{\mathscr{T}}}\right\}\right)\chi_{(B(x_0,R)\setminus B(x_0,R-\rho))\times [t_0-T+\delta,t_0]}\\
&\qquad\qquad+
\left(\tau_u^2 +\min\left\{\sigma_u^2,\,C{\widetilde{\mathscr{S}}}\right\}\right)
\chi_{B(x_0,R-\rho)\times [t_0-T,t_0-T+\delta]}\\
&\qquad\qquad+\left(\sigma_u^2 +\tau_u^2\right)\chi_{(B(x_0,R)\setminus B(x_0,R-\rho))\times [t_0-T,t_0-T+\delta]}
\Big].\end{split}
\end{equation*}
Taking the square root and recalling~\eqref{TUTTITE-0}, we obtain
the desired result stated in Theorem~\ref{D2}.
\end{proof}

\section{Proof of Corollary \ref{noqwmefef004}}\label{8uCODroldd}

We now deduce a local estimate in~$Q_{R/2,T/2}$ as a special case of the global
one obtained in Theorem~\ref{D2}.

\begin{proof}[Proof of Corollary~\ref{noqwmefef004}]
By taking~$\delta:=T/2$ and~$\rho:=R/2$,
we reduce the quantities~${\mathscr{T}}$
and~${\mathscr{S}}$ in~\eqref{TUTTITE-0}
to
\begin{eqnarray*}
&&{\mathscr{T}}=\sqrt{\frac{2}{{T}}}\qquad{\mbox{ and }}\qquad
{\mathscr{S}}=\frac{4}{R}+\frac{\sqrt{2}\,\sqrt[4]{k_+}}{\sqrt{R}}.
\end{eqnarray*}
Consequently, we deduce from~\eqref{iotabeta} that
\begin{equation*}
\iota\le
C({{\mathscr{T}}}+{{\mathscr{S}}})\le C\left(
\frac{1}{R}+\frac1{\sqrt{T}}+\frac{\sqrt[4]{k_+}}{\sqrt{R}}
\right)
.\end{equation*}
Furthermore, by~\eqref{161}, we know that~$
{\mathscr{B}}_1={\mathscr{B}}_2=
{\mathscr{B}}_3=0$ in~$Q_{R/2,T/2}$
and therefore we deduce from
Theorem~\ref{D2} that, for each~$(x,t)\in Q_{R/2,T/2}$,
\begin{eqnarray*}
&&G'(u(x,t))\,|\nabla u(x,t)|\leq \Big(C
\mathscr{C}+
\iota\mathscr{I}(x,t)\Big)\,
\Big(\xi-G(u(x,t))\Big)\\&&\qquad\le
C
\left( \sqrt{\mu}+\sqrt[3]{\gamma}+
\frac{1}{R}+\frac1{\sqrt{T}}+\frac{\sqrt[4]{k_+}}{\sqrt{R}}
\right)\Big(\xi-G(u(x,t))\Big),
\end{eqnarray*}
as desired.
\end{proof}

\begin{appendix}

\section{A direct proof showing that
Corollary~\ref{noqwmefef004}
implies Corollary~\ref{99} (i.e., Corollary 9 in \cite{MR2392508})}\label{APP-90}

Goal of this appendix is to give a direct proof of
Corollary~\ref{99}
from the statement of Corollary~\ref{noqwmefef004}.
To this end, in the setting given by the statement of
Corollary~\ref{99}, we define~$F(s):=s^p$.
Let also
\begin{eqnarray}\label{Carthage}
\widetilde T:= M^{p-1}T
\qquad{\mbox{ and }}\qquad
\widetilde u(x,t):=\frac{u\big(x,M^{1-p}(t-t_0)+t_0\big)}{M}.\end{eqnarray}
We observe that if~$t\in [t_0-\widetilde T,t_0]$ then~$M^{1-p}(t-t_0)+t_0\in[t_0- T,t_0]$.
Consequently, if~$(x,t)\in Q_{R,\widetilde T}$,
\begin{eqnarray*}
\partial_t \widetilde u(x,t)=\frac{\partial_t u\big(x,M^{1-p}(t-t_0)+t_0\big)}{M^p}
=\frac{\Delta u^p\big(x,M^{1-p}(t-t_0)+t_0\big)}{M^p}=\Delta\widetilde u^p(x,t).
\end{eqnarray*}
Also, $0<\widetilde u\leq 1$, hence we can exploit
Corollary~\ref{noqwmefef004}, with~$T$ replaced by~$\widetilde T$, $u$ replaced by~$\widetilde u$
and~$M$ replaced by~$1$. Moreover, in~\eqref{DEFG} we pick
any~$s_0\ge2^{\frac1{1-p}}$
and we have that
$$ G(s)
=p\int_{s_0}^s h^{p-2}\,dh=\frac{p}{1-p}\left(\frac{1}{s_0^{1-p}}-\frac1{s^{1-p}}\right).$$
Then, choosing~$\eta:=\frac{p}{2(1-p)}$ and~$\xi:=0$, we have that, for all~$s\in(0,1]$,
\begin{equation}\label{767:0-scjkio-0}
\xi-G(s)=
\frac{p}{1-p}\left(\frac1{s^{1-p}}-\frac1{s_0^{1-p}}\right)
\end{equation}
and, as a byproduct,
\begin{equation}\label{767:0-scjkio-1}
\xi-G(s)
\ge
\frac{p}{1-p}\left(1-\frac1{s_0^{1-p}}\right)\ge\frac{p}{2(1-p)}
=\eta.\end{equation}
Furthermore,
$$ \frac{\sqrt{n}\,|F''(s)|\,s}{F'(s)}=
\sqrt{n} \,(1-p),$$
whence,
setting~$\kappa:=\sqrt{n}\left(p-1+\frac1{\sqrt{n}}\right)>0$,
\begin{equation}\label{767:0-scjkio-2}
1-\frac{\sqrt{n}\,|F''(s)|\,s}{F'(s)}=\kappa>0.
\end{equation}
Moreover, for all~$s\in(0,1]$,
\begin{eqnarray*}&& 2F'(s)-\frac{\sqrt{n}|F''(s)|s}{F'(s)}\big(\xi-G(s)\big)=
2p s^{p-1}-
\sqrt{n} \,(1-p)\,
\frac{p}{1-p}\left(\frac1{s^{1-p}}-\frac1{s_0^{1-p}}\right)\\&&\qquad
=\frac{2p}{ s^{1-p}}-p\sqrt{n}
\left(\frac1{s^{1-p}}-\frac1{s_0^{1-p}}\right)
=
\frac{p(2-\sqrt{n})}{ s^{1-p}}
+\frac{p\sqrt{n}}{s_0^{1-p}}\\&&\qquad\ge
p(2-\sqrt{4})
+\frac{p\sqrt{n}}{s_0^{1-p}}
\geq \frac{p\sqrt{n}}{s_0^{1-p}}\geq0
\end{eqnarray*}
{F}rom this, \eqref{767:0-scjkio-1} and~\eqref{767:0-scjkio-2},
we see that the conditions in~\eqref{kappapi}, \eqref{XIPOS}
and~\eqref{CONDFOD} are fulfilled.

We now check that~\eqref{SUPER} is also satisfied
(and, from the technical point of view, this step
is the one that makes assumption~\eqref{SUPER}
more convenient than~\eqref{CONDMA0}). To this end, we remark that,
in light of~\eqref{767:0-scjkio-0},
\begin{equation}\label{SIN8}
\xi-G(s)=\frac{p\,s^{p-1}}{1-p}-\frac{p\,s_0^{p-1}}{1-p},
\end{equation}
and therefore, for every~$s\in(0,1]$,
\begin{equation}\label{LAMCOSKLDA-lkjhgfMSDIG-PRE}
\frac{F'(s)}{\xi-G(s)}=\frac{(1-p)\,s^{p-1}}{s^{p-1}-s_0^{p-1}}=
\frac{ 1-p }{1-(s/s_0)^{1-p}},
\end{equation}
and consequently
\begin{equation}\label{LAMCOSKLDA-lkjhgfMSDIG}
\frac{F'(s)}{\xi-G(s)}\le\frac{ 1-p }{1-(1/s_0)^{1-p}}\le
2(1-p),
\end{equation}
and this shows that condition~\eqref{SUPER}
is fulfilled here with~$\Gamma:=2(1-p)$.

Therefore,
we can utilize Corollary~\ref{noqwmefef004} and conclude that,
if~$(x,t)\in Q_{R/2,\widetilde T/2}$,
\begin{equation}
\label{Nioksmdcwqur}\begin{split}&
\frac{G'(\widetilde u(x,t))\,|\nabla \widetilde u(x,t)|}{
\xi-G(\widetilde u(x,t))}=
\frac{|\nabla G(\widetilde u(x,t))|}{\xi-G(\widetilde u(x,t))}\\&\qquad
\le C\left(\frac1R+\frac1{\sqrt{\widetilde T}}+\sqrt{k}\right)=C\left(\frac1R+\frac{M^{\frac{1-p}2}}{\sqrt{T}}+\sqrt{k}\right).
\end{split}\end{equation}
Thus, recalling~\eqref{SIN8},
we find that
$$
\xi-G(s)=
\frac{s\,G'(s)}{1-p}-\frac{p\,s_0^{p-1}}{1-p},
$$
and then~\eqref{Nioksmdcwqur} gives that
\begin{eqnarray}\label{A.8}
(1-p)\,\frac{G'(\widetilde u(x,t))\,|\nabla \widetilde u(x,t)|}{
\widetilde u(x,t)\,G'(\widetilde u(x,t))-p\,s_0^{p-1}
}
\le C\left(\frac1R+\frac{M^{\frac{1-p}2}}{\sqrt{T}}+\sqrt{k}\right).
\end{eqnarray}
We can now send~$s_0\to+\infty$ and (up to renaming constants) conclude that,
for every~$(x,t)\in Q_{R/2,\widetilde T/2}$,
\begin{eqnarray}\label{A.9}
\frac{|\nabla \widetilde u(x,t)|}{
\widetilde u(x,t)
}
\le C\left(\frac1R+\frac{M^{\frac{1-p}2}}{\sqrt{T}}+\sqrt{k}\right),
\end{eqnarray}
which, scaling back the time variable, leads to the desired result in~\eqref{TIm94}.

\begin{rem} {\rm We stress that in this paper we are
not addressing the optimality of the range of powers~$p$
taken into account in the porous medium equation
dealt with in Corollary~\ref{99} (rather, the main goal of
this appendix was to show how to obtain some results in the literature,
such as Corollary~9 in~\cite{MR2392508},
as a byproduct of our main results). In a sense, we do not
expect the range of~$p$ presented here to be optimal
and it can be expected that broader intervals in~$p$ could
be addressed by combining our methods with those in~\cite{MR2853544}
(see in particular Remark~1.1 in~\cite{MR2853544}).
}\end{rem}

\section{The case of equation~\eqref{APPEPSE}}\label{APPEPS}

To emphasize the possible role
of the additional function~$H$ in
the evolution equation~\eqref{EQ}, we present here
a specific application (without aiming at the greatest possible generality):

\begin{cor}\label{NVDGNUIJGNFGNTGBPROSMGN}
Let~$\mathscr{M}$
be a complete Riemannian manifold of dimension~$n$
with~$\mathrm{Ric}(\mathscr{M})\geq-k$, for some~$k\ge0$.
Let~$ u=u(x,t)$ be a positive solution of~\eqref{APPEPSE} in~$Q_{R,T}$,
with~$ 0<m\leq u\le M$ for some~$M>0$, $m>0$,
$\varepsilon>0$, $a_0\in(0,1)$, $q>0$,
$p$ as in~\eqref{RANGEP} and~$a\in C^1(\R)$ such that~$a(t)
\in [a_0,a_0^{-1}]$ for all~$t\in\R$.

Let also
\begin{equation}\label{BAK89DAPPLDBLASALBODL} {\mathscr{F}}(R,T):=\sup_{(x,t)\in Q_{R,T}}
\frac{|\nabla u(x,t)|}{ u(x,t) }\qquad
{\mbox{ and }}\qquad{\mathscr{H}}(R,T):=
\sup_{(x,t)\in Q_{R,T}}\frac{|D^2 u(x,t)|}{\big(u(x,t)\big)^{3-p-q}}.\end{equation}
Then,
there exists~$C>0$, depending only on~$n$, $a_0$, $q$, and~$p$~such that
\begin{equation}\label{NSDVENmOIDKN45367SDHFI}
{\mathscr{F}}\left(\frac{R}2,\frac{T}2\right)\le
C\,
\left(
\frac{ \sqrt{k} M^{\frac{1-p}2}}{ m^{\frac{1-p}2}}
+\sqrt[3]{\varepsilon}\; M^{\frac{2-2p}3}\,
\big({\mathscr{F}}(R,T)\big)^{\frac{q-1}3}\,\sqrt[3]{{\mathscr{H}}(R,T)
}+
\frac{1}{R}+\frac{M^{\frac{1-p}2}}{\sqrt{T}}+\frac{\sqrt[4]{k_+}}{\sqrt{R}}
\right).\end{equation}
\end{cor}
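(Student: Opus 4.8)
The plan is to deduce Corollary~\ref{NVDGNUIJGNFGNTGBPROSMGN} from Corollary~\ref{noqwmefef004} applied to a parabolically rescaled version of $u$, in the same spirit as the argument in Appendix~\ref{APP-90} but now retaining the source term. Concretely, I would set $\widetilde T:=M^{p-1}T$, $\widetilde a(t):=a\big(M^{1-p}(t-t_0)+t_0\big)$, $\widetilde\varepsilon:=\varepsilon M^{q-p}$ and
$$\widetilde u(x,t):=\frac{u\big(x,M^{1-p}(t-t_0)+t_0\big)}{M},$$
and check by a direct computation (exactly as in Appendix~\ref{APP-90}) that $\widetilde u$ is a positive solution of $\partial_t\widetilde u=\widetilde a(t)\,\Delta\widetilde u^p+\widetilde\varepsilon|\nabla\widetilde u|^q$ in $Q_{R,\widetilde T}$ with $m/M\le\widetilde u\le1$. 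This is the instance of equation~\eqref{EQ} given by $F(s):=s^p$, $a(x,t,s):=\widetilde a(t)$ and $H(x,t,s,\omega,\Omega):=\widetilde\varepsilon|\omega|^q$. For this $F$, taking $s_0$ large, $\xi:=0$, $\eta:=\frac{p}{2(1-p)}$, $\kappa:=\sqrt n\big(p-1+\tfrac1{\sqrt n}\big)$ and $\Gamma:=2(1-p)$, the assumptions~\eqref{FPRIM}, \eqref{kappapi}, \eqref{XIPOS}, \eqref{SUPER}, \eqref{CONDFOD} hold verbatim as verified in Appendix~\ref{APP-90}.

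The heart of the argument is the evaluation of the structural quantities $\mu$ and $\gamma$ of~\eqref{DEFMU22} and~\eqref{DEFMU33} for the rescaled problem. Since $\widetilde a$ depends only on $t$ we have $\mu_2=\gamma_2=0$; and since $H$ is independent of $x$, of $s$ and of $\Omega$, we have $\gamma_1=0$ and $\partial_uH\equiv0$. For $\mu_1$, grouping the three $H$-terms of~\eqref{DEFmu} and using $F'(\widetilde u)/(\xi-G(\widetilde u))\le\Gamma=2(1-p)$ (this is where~\eqref{SUPER} with the explicit value of $\Gamma$ is crucial) one finds
$$\frac{HF''(\widetilde u)}{F'(\widetilde u)}-\frac{H}{\widetilde u}+\frac{HF'(\widetilde u)}{(\xi-G(\widetilde u))\,\widetilde u}=\frac{H}{\widetilde u}\Big(p-2+\frac{F'(\widetilde u)}{\xi-G(\widetilde u)}\Big)\le-\frac{pH}{\widetilde u}\le0$$
pointwise, so that $\mu=\mu_1\le\sup_{Q_{R,\widetilde T}}k\,\widetilde a\,F'(\widetilde u)\le a_0^{-1}p\,k\,(M/m)^{1-p}$, where $m>0$ is used to keep $F'(\widetilde u)=p\widetilde u^{p-1}$ bounded. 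For $\gamma$ only $\gamma_3$ survives: from $\nabla_\omega H=\widetilde\varepsilon q|\omega|^{q-2}\omega$ and $F'(\widetilde u)/\widetilde u=p\widetilde u^{p-2}$ we get $\gamma=\gamma_3\le\widetilde\varepsilon pq\sup\widetilde u^{p-2}|\nabla\widetilde u|^{q-1}|D^2\widetilde u|$, and the algebraic identity $\widetilde u^{p-2}|\nabla\widetilde u|^{q-1}|D^2\widetilde u|=\big(|\nabla\widetilde u|/\widetilde u\big)^{q-1}\big(|D^2\widetilde u|/\widetilde u^{3-p-q}\big)$ combined with the scaling relations $|\nabla\widetilde u|/\widetilde u=|\nabla u|/u$ and $|D^2\widetilde u|/\widetilde u^{3-p-q}=M^{2-p-q}|D^2u|/u^{3-p-q}$ gives $\gamma\le C\,\widetilde\varepsilon\,M^{2-p-q}\big(\mathscr F(R,T)\big)^{q-1}\mathscr H(R,T)=C\,\varepsilon\,M^{2-2p}\big(\mathscr F(R,T)\big)^{q-1}\mathscr H(R,T)$ since $\widetilde\varepsilon=\varepsilon M^{q-p}$.

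Finally I would apply Corollary~\ref{noqwmefef004} to $\widetilde u$ on $Q_{R,\widetilde T}$ and let $s_0\to+\infty$, so that $G'(\widetilde u)/(\xi-G(\widetilde u))\to(1-p)/\widetilde u$ and the estimate reads $|\nabla\widetilde u|/\widetilde u\le C\big(\sqrt\mu+\sqrt[3]\gamma+\tfrac1R+\tfrac1{\sqrt{\widetilde T}}+\tfrac{\sqrt[4]{k_+}}{\sqrt R}\big)$ on $Q_{R/2,\widetilde T/2}$. Substituting $\sqrt\mu\le C\sqrt k\,M^{(1-p)/2}/m^{(1-p)/2}$, $\sqrt[3]\gamma\le C\,\varepsilon^{1/3}M^{(2-2p)/3}\big(\mathscr F(R,T)\big)^{(q-1)/3}\big(\mathscr H(R,T)\big)^{1/3}$ and $1/\sqrt{\widetilde T}=M^{(1-p)/2}/\sqrt T$, and noting that $\sup_{Q_{R/2,\widetilde T/2}}|\nabla\widetilde u|/\widetilde u=\sup_{Q_{R/2,T/2}}|\nabla u|/u=\mathscr F(R/2,T/2)$, yields exactly~\eqref{NSDVENmOIDKN45367SDHFI}; all constants depend only on $n,a_0,q,p$, because the constant of Corollary~\ref{noqwmefef004} depends only on $n,\Gamma,\kappa,a_0,\eta$, all of which are here functions of $n,p,a_0$. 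The main obstacle I anticipate is twofold: first, confirming that the $H$-contributions to $\mu_1$ are genuinely nonpositive — without the precise bound $\Gamma=2(1-p)$ coming from~\eqref{SUPER}, $\mu$ would absorb uncontrolled powers of $|\nabla u|$; second, the careful bookkeeping of the powers of $M$ arising from the time rescaling, from the normalization $\widetilde u=u/M$, and from the weight $u^{3-p-q}$ in $\mathscr H$, so that they collapse to the single clean exponent $M^{(2-2p)/3}$ in the statement.
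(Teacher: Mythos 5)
Your proposal is correct and follows essentially the same route as the paper's own proof: the rescaling $\widetilde u=u/M$, $\widetilde T=M^{p-1}T$ with $F(s)=s^p$ and $H(\omega)=\varepsilon M^{q-p}|\omega|^q$, the verification of the structural hypotheses as in Appendix~\ref{APP-90}, the bounds $\mu\le a_0^{-1}pk(M/m)^{1-p}$ (via the grouping of the $H$-terms into $-pH/\widetilde u\le0$ using $\Gamma=2(1-p)$) and $\gamma=\gamma_3\le C\varepsilon M^{2-2p}\,(\mathscr F(R,T))^{q-1}\mathscr H(R,T)$, and the application of Corollary~\ref{noqwmefef004} with $s_0\to+\infty$ before undoing the scaling. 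No substantive differences from the paper's argument.
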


\begin{proof} 
We let~$\widetilde T$ and~$\widetilde u$ as in~\eqref{Carthage}
and~$\widetilde a(t):=a(M^{1-p}(t-t_0)+t_0)$.
Let also
$$ F(s):=s^p\quad{\mbox{ and }}\quad H(\omega):=\varepsilon M^{q-p} \,|\omega|^q.$$
In this way, we have that~$\frac{m}M\le \widetilde{u}\le1$ and, by~\eqref{APPEPSE},
\begin{equation}\label{APPEPSE-SVALESDF}
\begin{split}
\partial_t \widetilde u(x,t)\,&=\,\frac{\partial_tu\big(x,M^{1-p}(t-t_0)+t_0\big)}{M^p}
\\&=\,\frac{
a(M^{1-p}(t-t_0)+t_0)\;\Delta u^p(x,M^{1-p}(t-t_0)+t_0)+\varepsilon\,|\nabla u(x,M^{1-p}(t-t_0)+t_0)|^{q}
}{M^p}\\
&=\,\widetilde a(t) \Delta \widetilde{u}^p(x,t)
+\frac{\varepsilon\,|\nabla\widetilde{ u}(x,t)|^{q}}{M^{p-q}}\\&
=\,\widetilde a(t)
\Delta(F(\widetilde{u}))+H(\nabla\widetilde{ u}),
\end{split}\end{equation}
and thus
we see that~\eqref{APPEPSE-SVALESDF} is a special case of~\eqref{EQ}.

As already remarked in Appendix~\ref{APP-90},
we have that
conditions~\eqref{FPRIM},
\eqref{kappapi}, \eqref{XIPOS}, \eqref{SUPER}
and~\eqref{CONDFOD}
are satisfied by~$F$ (with suitable~$s_0$, $\xi$, $\eta$, $\Gamma$
and~$\kappa$ depending only on~$n$ and~$p$). 

Moreover,
we have that~$\widetilde{a}\in [a_0,a_0^{-1}]$, whence
we deduce from~\eqref{DEFmu}, \eqref{MUDUE}
and~\eqref{LAMCOSKLDA-lkjhgfMSDIG} that
\begin{eqnarray*}
\mu&=&\mu_1\\
&\le&\sup_{(x,t)\in Q_{R,\widetilde{T}}}\Bigg(a^{-1}_0 kp \big(\widetilde u(x,t)\big)^{p-1}
+\frac{\varepsilon\,(p-1)\,M^{q-p}\,|\nabla \widetilde u(x,t)|^q\,}{\widetilde u(x,t)
}
\\&&\qquad\qquad
-\frac{\varepsilon\,M^{q-p}\,|\nabla \widetilde u(x,t)|^q}{\widetilde u(x,t)
}+
\frac{2\varepsilon\,(1-p)\,M^{q-p}\,|\nabla \widetilde u(x,t)|^q}{
\widetilde u(x,t)}\Bigg)_+\\&=&
\sup_{(x,t)\in Q_{R,\widetilde{T}}}\Bigg(a^{-1}_0 kp \big(\widetilde u(x,t)\big)^{p-1}
-\frac{p\varepsilon\,M^{q-p}\,|\nabla \widetilde u(x,t)|^q}{
\widetilde u(x,t)}\Bigg)_+\\&\le&
\sup_{(x,t)\in Q_{R,\widetilde{T}}}\Bigg(a^{-1}_0 kp \big(\widetilde u(x,t)\big)^{p-1}
\Bigg)_+
\\&\le&
\frac{ kp M^{1-p}}{a_0 m^{1-p}}.
\end{eqnarray*}
Also, owing to~\eqref{DEFgamma}, \eqref{GAMMA2}
and~\eqref{GAMMA32},
\begin{eqnarray*}
\gamma&=&\gamma_3 \\&\le&
\sup_{(x,t)\in Q_{R,\widetilde{T}}}
pq\varepsilon\,M^{q-p}\big(\widetilde u(x,t)\big)^{p-2}
|\nabla \widetilde u(x,t)|^{q-1}
\,|D^2\widetilde u(x,t)|\\&
=&\sup_{(x,t)\in Q_{R,{T}}}
pq\varepsilon\,M^{2-2p}\big( u(x,t)\big)^{p-2}
|\nabla u(x,t)|^{q-1}
\,|D^2 u(x,t)|
\\&\le&
{pq\varepsilon M^{2-2p}}\,
\big({\mathscr{F}}(R,T)\big)^{q-1}\,{\mathscr{H}}(R,T).
\end{eqnarray*}
From~\eqref{767:0-scjkio-0} (and sending~$s_0\to+\infty$), utilizing
Corollary~\ref{noqwmefef004}, and renaming~$C$ line after line,
for every~$(x,t)\in Q_{R/2,T/2}$,
\begin{eqnarray*}
\sup_{Q_{R/2,T/2}}\frac{\,|\nabla u|}{u}&=&
\sup_{Q_{R/2,\widetilde{T}/2}}\frac{\,|\nabla\widetilde{ u}|}{\widetilde{u}}\\&
\le& C
\left( \sqrt{\mu}+\sqrt[3]{\gamma}+
\frac{1}{R}+\frac1{\sqrt{\widetilde{T}}}+\frac{\sqrt[4]{k_+}}{\sqrt{R}}
\right)
\\&\le&C
\left(
\frac{ \sqrt{k} M^{\frac{1-p}2}}{\sqrt{a_0} m^{\frac{1-p}2}}
+\sqrt[3]{\varepsilon}\; M^{\frac{2-2p}3}\,
\big({\mathscr{F}}(R,T)\big)^{\frac{q-1}3}\,\sqrt[3]{{\mathscr{H}}(R,T)
}+
\frac{1}{R}+\frac{M^{\frac{1-p}2}}{\sqrt{T}}+\frac{\sqrt[4]{k_+}}{\sqrt{R}}
\right),
\end{eqnarray*}
that gives the desired result.
\end{proof}

We remark that estimate~\eqref{NSDVENmOIDKN45367SDHFI}
is certainly nonstandard in bounding~$
{\mathscr{F}}\left(\frac{R}2,\frac{T}2\right)$
with the obviously larger term~$
{\mathscr{F}}(R,T)$ and with the term~$
{\mathscr{H}}(R,T)$
that involves higher derivatives: nevertheless,
estimate~\eqref{NSDVENmOIDKN45367SDHFI} is nontrivial,
since these larger or higher order objects
occur with a lower exponent when~$q<4$
and are modulated by the (possibly small)
structural parameter~$\varepsilon$.

In this spirit, we point out a quantitative result
on the oscillations
of ancient solutions
which easily follows from Corollary~\ref{NVDGNUIJGNFGNTGBPROSMGN}:

\begin{cor}
Assume that
\begin{equation}\label{NONNERI-BIS-CO}\begin{split}&
{\mbox{${\mathscr{M}}$ is
a complete Riemannian manifold of dimension~$n$}}\\&{\mbox{with nonnegative Ricci curvature.}}\end{split}\end{equation}
Let~$u$ be a positive, bounded and smooth solution
of the evolution equation
\begin{equation*}
\partial_t u=a(t)\Delta u^p+\varepsilon\,|\nabla u|^{q}
\end{equation*}
in~${\mathscr{M}}\times(-\infty,0]$, for some~$a\in C^1((-\infty,0])$
which is positive, bounded and bounded away from zero,
some~$\varepsilon>0$, $
p\in\left(1-\frac1{\sqrt{n}},1\right]$
and~$q\in(0,4)$.

Assume that
\begin{equation}\label{SUPNUOSPESA} {\mathscr{F}}_\star:=\sup_{{\mathscr{M}}\times(-\infty,0]}
\frac{|\nabla u |}{ u }<+\infty\qquad
{\mbox{ and }}\qquad {\mathscr{H}}_\star:=
\sup_{{\mathscr{M}}\times(-\infty,0]}\frac{|D^2 u|}{u^{3-p-q}}<+\infty.
\end{equation}

Then, there exists~$C>0$ depending only on~$n$, $a$, $q$
and~$p$ such that
$$
{\mathscr{F}}_\star\le
C\, \varepsilon^{\frac1{4-q}}\; M^{\frac{2-2p}{4-q}}\,
{{\mathscr{H}}_\star^{\frac1{4-q}}
},
$$
where~$M$ is the supremum of~$u$. 
\end{cor}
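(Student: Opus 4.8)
The plan is to invoke Corollary~\ref{NVDGNUIJGNFGNTGBPROSMGN} on the parabolic cylinders $Q_{R,T}$ centered at an arbitrarily fixed point $x_0\in\mathscr{M}$ and with $t_0:=0$, and then to let $R$ and $T$ tend to $+\infty$. Since the Ricci curvature is nonnegative, assumption~\eqref{RIC} holds with $k:=0$, so the terms involving $\sqrt{k}$ and $\sqrt[4]{k_+}$ in~\eqref{NSDVENmOIDKN45367SDHFI} vanish. We may also assume $\mathscr{F}_\star>0$, since otherwise $u$ is constant and the inequality is trivial.

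First I would check that Corollary~\ref{NVDGNUIJGNFGNTGBPROSMGN} really applies on each $Q_{R,T}$. By completeness of $\mathscr{M}$ the closed ball $\overline{B(x_0,R)}$ is compact, so the smooth positive function $u$ attains a strictly positive minimum on the compact set $\overline{B(x_0,R)}\times[-T,0]$; together with the global bound $u\le M$ this furnishes constants $0<m\le u\le M$ on $Q_{R,T}$. The essential point is that the constant delivered by Corollary~\ref{NVDGNUIJGNFGNTGBPROSMGN} depends only on $n$, $a_0$, $q$ and $p$ — in particular not on $m$, which moreover enters~\eqref{NSDVENmOIDKN45367SDHFI} solely through the term that has already disappeared because $k=0$. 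With the notation of~\eqref{BAK89DAPPLDBLASALBODL}, we thus obtain
\begin{equation*}
\mathscr{F}\left(\frac R2,\frac T2\right)\le C\left(\sqrt[3]{\varepsilon}\,M^{\frac{2-2p}{3}}\,\big(\mathscr{F}(R,T)\big)^{\frac{q-1}{3}}\,\sqrt[3]{\mathscr{H}(R,T)}+\frac1R+\frac{M^{\frac{1-p}2}}{\sqrt T}\right),
\end{equation*}
with $C=C(n,a_0,q,p)$.

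Next I would pass to the limit $R,T\to+\infty$. The quantities $\mathscr{F}(R,T)$ and $\mathscr{H}(R,T)$ are nondecreasing in both $R$ and $T$, and since $\bigcup_{R,T>0}Q_{R,T}=\mathscr{M}\times(-\infty,0]$ their limits are exactly $\mathscr{F}_\star$ and $\mathscr{H}_\star$, both finite by~\eqref{SUPNUOSPESA}; likewise $\mathscr{F}(R/2,T/2)\to\mathscr{F}_\star$, while $1/R$ and $M^{(1-p)/2}/\sqrt T$ tend to $0$. As $\mathscr{F}_\star\in(0,+\infty)$, the quantity $\big(\mathscr{F}(R,T)\big)^{(q-1)/3}$ converges to the finite positive number $\mathscr{F}_\star^{(q-1)/3}$, irrespective of the sign of $q-1$. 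Passing to the limit in the displayed inequality therefore gives
\begin{equation*}
\mathscr{F}_\star\le C\,\sqrt[3]{\varepsilon}\,M^{\frac{2-2p}{3}}\,\mathscr{F}_\star^{\frac{q-1}{3}}\,\sqrt[3]{\mathscr{H}_\star}.
\end{equation*}

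To conclude I would divide by $\mathscr{F}_\star^{(q-1)/3}$, which is legitimate since $1-\frac{q-1}{3}=\frac{4-q}{3}>0$ because $q<4$, obtaining $\mathscr{F}_\star^{(4-q)/3}\le C\sqrt[3]{\varepsilon}\,M^{(2-2p)/3}\,\mathscr{H}_\star^{1/3}$, and then raise both sides to the power $\frac{3}{4-q}$ and rename the constant (still depending only on $n$, $a$, $q$ and $p$) to reach the asserted estimate. The two points that require a little care — and hence constitute the only real obstacles — are precisely these: first, legitimately applying Corollary~\ref{NVDGNUIJGNFGNTGBPROSMGN}, which is stated under an explicit positive lower bound on $u$, which is supplied here by compactness of closed balls while the relevant constant stays independent of it; and second, ensuring $\mathscr{F}_\star$ is finite and nonzero so that the final division and the limit of the power $\big(\mathscr{F}(R,T)\big)^{(q-1)/3}$ are meaningful, which is secured by the hypothesis~\eqref{SUPNUOSPESA} and by dispatching the degenerate case $\mathscr{F}_\star=0$ at the outset.
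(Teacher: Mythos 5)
Your argument is correct and follows essentially the same route as the paper: apply Corollary~\ref{NVDGNUIJGNFGNTGBPROSMGN} with $t_0:=0$ and $k=0$, send $R,T\to+\infty$ to get $\mathscr{F}_\star\le C\,\sqrt[3]{\varepsilon}\,M^{\frac{2-2p}{3}}\,\mathscr{F}_\star^{\frac{q-1}{3}}\,\sqrt[3]{\mathscr{H}_\star}$, and then solve for $\mathscr{F}_\star$ using $q<4$. The extra care you take (supplying the local lower bound $m$ by compactness while noting that the constant, and the only $m$-dependent term, is neutralized by $k=0$, and dispatching the case $\mathscr{F}_\star=0$ before dividing) fills in details the paper leaves implicit, and is handled correctly.
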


\begin{proof} We let~$
{\mathscr{F}}(R,T)$ and~$
{\mathscr{H}}(R,T)$ be as in~\eqref{BAK89DAPPLDBLASALBODL}.
With this notation, and recalling~\eqref{NONNERI-BIS-CO},
we can exploit~\eqref{NSDVENmOIDKN45367SDHFI}
with~$t_0:=0$ and any given~$x_0\in{\mathscr{M}}$,
thereby finding that, for every~$R>0$, $T>0$, $x\in
B(x_0,R/2)$ and~$t\in [-T/2,0]$,
\begin{equation*}
{\mathscr{F}}\left(\frac{R}2,\frac{T}2\right)\le
C\,
\left(\sqrt[3]{\varepsilon}\; M^{\frac{2-2p}3}\,
\big({\mathscr{F}}(R,T)\big)^{\frac{q-1}3}\,\sqrt[3]{{\mathscr{H}}(R,T)
}+
\frac{1}{R}+\frac{M^{\frac{1-p}2}}{\sqrt{T}}
\right).\end{equation*}
Consequently,
we can send~$R\to+\infty$ and~$T\to+\infty$
and obtain that
\begin{equation*}
{\mathscr{F}}_\star\le
C\,\sqrt[3]{\varepsilon}\; M^{\frac{2-2p}3}\,
{\mathscr{F}}_\star^{\frac{q-1}3}\,\sqrt[3]{{\mathscr{H}}_\star
},\end{equation*}
from which the desired result follows.
\end{proof}

\section{A Liouville-type result}\label{DFDF}

In this appendix we point out that
suitable
classification results for ancient solutions under appropriate
pointwise bounds follows directly from uniform interior estimates.
We do not address the most general statement here,
but just provide the following one as a simple example:

\begin{cor}
Assume that
\begin{equation}\label{NONNERI}\begin{split}&
{\mbox{${\mathscr{M}}$ is
a complete Riemannian manifold of dimension~$n$}}\\&{\mbox{with nonnegative Ricci curvature.}}\end{split}\end{equation}
Let~$u$ be a positive and smooth solution
of the evolution equation
\begin{equation}\label{MS DMDOMFNODKMSKR}
u_{t}=a(t)\,\Delta(F(u))\end{equation}
in~${\mathscr{M}}\times(-\infty,0]$, for some~$a\in C^1((-\infty,0])$
which is positive, bounded and bounded away from zero,
and for some~$F\in C^{2}(0,+\infty)$ satisfying~\eqref{FPRIM},
\eqref{kappapi}, \eqref{XIPOS}, \eqref{SUPER}
and~\eqref{CONDFOD}.

Assume that
\begin{equation}\label{SUPPE} \sup_{{x\in B_R}\atop{t\in[-T,0]}}u(x,t)=o(R)+o(\sqrt{T})\end{equation}
near infinity. Then, $u$ is constant.
\end{cor}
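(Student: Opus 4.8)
The plan is to apply the universal interior estimate of Corollary~\ref{noqwmefef004} on an exhausting family of parabolic cylinders and then let their size go to infinity. Fix a point~$x_0\in\mathscr{M}$, set~$t_0:=0$, and for~$R,T>0$ consider~$Q_{R,T}=B(x_0,R)\times[-T,0]$. Since~$\mathscr{M}$ is complete the closure of~$Q_{R,T}$ is compact, so the smooth function~$u$ is bounded on it; in particular equation~\eqref{MS DMDOMFNODKMSKR} is the special case of~\eqref{EQ} in which~$a=a(t)$ is independent of~$x$ and of~$u$ and~$H\equiv0$, and Corollary~\ref{noqwmefef004} applies on every such~$Q_{R,T}$. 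The crucial observation is that all the structural quantities governing the estimate drop out: since~$\mathscr{M}$ has nonnegative Ricci curvature we take~$k=0$, so~$k_+=0$; since~$H\equiv0$ every $H$-dependent term in~\eqref{DEFmu}, \eqref{DEFgamma} and~\eqref{GAMMA32} vanishes, whence~$\mu_1=\gamma_1=\gamma_3=0$; and since~$a$ depends neither on~$x$ nor on~$u$ we have~$\partial_ua\equiv0$ and~$\nabla a\equiv0$, so~$\mu_2=\gamma_2=0$ by~\eqref{MUDUE} and~\eqref{GAMMA2}. Hence~$\mu=0$ and~$\gamma=0$ (see~\eqref{DEFMU22} and~\eqref{DEFMU33}), and Corollary~\ref{noqwmefef004} gives, for all~$(x,t)\in Q_{R/2,T/2}=B(x_0,R/2)\times[-T/2,0]$,
\begin{equation*}
G'(u(x,t))\,|\nabla u(x,t)|\le C\left(\frac1R+\frac1{\sqrt T}\right)\big(\xi-G(u(x,t))\big),
\end{equation*}
with~$C$ depending only on~$n$, $\Gamma$, $\kappa$, $a_0$ and~$\eta$.

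Next I would pass to the limit. By~\eqref{DEFG} we have~$G'(u)\,\nabla u=\nabla\big(G(u)\big)$, and~$\xi-G(u)\ge\eta>0$ by~\eqref{XIPOS}, so the displayed bound reads~$\big|\nabla\ln(\xi-G(u(x,t)))\big|\le C\big(\tfrac1R+\tfrac1{\sqrt T}\big)$ on~$Q_{R/2,T/2}$. Fix any~$(x,t)\in\mathscr{M}\times(-\infty,0]$; for all sufficiently large~$R$ and~$T$ this point lies in~$Q_{R/2,T/2}$, and letting~$R,T\to+\infty$ forces~$\nabla G(u(x,t))=0$. Since~$G'(u)=F'(u)/u>0$ by~\eqref{FPRIM}, this yields~$\nabla u(x,t)=0$. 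Hence~$u=u(t)$ is spatially constant, and plugging this into~\eqref{MS DMDOMFNODKMSKR} we get~$\Delta(F(u))\equiv0$, so~$u_t\equiv0$, and therefore~$u$ is constant, as claimed.

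The one point that requires care is that the right-hand side obtained from Corollary~\ref{noqwmefef004} on~$Q_{R,T}$ must genuinely vanish along the exhaustion. The constant~$C$ is harmless, since it depends only on~$n,\Gamma,\kappa,a_0,\eta$ and not on the cylinder; the delicate factor is~$\xi-G(u(x,t))$, because if the structural parameter~$\xi$ (and~$\Gamma,\eta$) for which~\eqref{XIPOS}--\eqref{CONDFOD} hold must be chosen in terms of~$M_{R,T}:=\sup_{Q_{R,T}}u$, then so does this factor, and one must ensure that~$\big(\xi-G(u(x,t))\big)\big(\tfrac1R+\tfrac1{\sqrt T}\big)\to0$. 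This is precisely what the growth hypothesis~\eqref{SUPPE}, namely~$\sup_{B_R\times[-T,0]}u=o(R)+o(\sqrt T)$, is designed to provide: it forces~$M_{R,T}$, and with it the admissible value of~$\xi$, to grow slowly enough in~$R$ and~$T$ that the product still tends to zero in the limit. Once this bookkeeping of the dependence of the structural constants on~$\sup_{Q_{R,T}}u$ is settled, the argument above goes through; I expect this to be the main (and only) real obstacle.
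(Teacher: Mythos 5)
Your proposal is correct, and it rests on the same engine as the paper, namely Corollary~\ref{noqwmefef004} applied on $Q_{R,T}=B(x_0,R)\times[-T,0]$ with $k=0$ and with $\mu=\gamma=0$ (your verification of the vanishing of $\mu_1,\mu_2,\gamma_1,\gamma_2,\gamma_3$ matches the paper's). The difference is in the endgame. The paper rewrites $G'(u)=F'(u)/u$, uses \eqref{CONDFOD} (and then invokes \eqref{kappapi}) to replace the factor $\xi-G(u)$ by a multiple of $u$, arriving at $|\nabla u|\le C\big(\tfrac1R+\tfrac1{\sqrt T}\big)u$, and only then brings in the growth hypothesis \eqref{SUPPE} with the choice $T=R^2$ to get $|\nabla u|\le C\,o(R)/R\to0$; spatial constancy then forces $\Delta(F(u))=0$ and hence $u_t=0$ (a step the paper leaves implicit and you spell out). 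You instead keep the factor $\xi-G(u(x,t))$, observe that at a fixed point it is a fixed finite number, and let $R,T\to+\infty$, which gives $\nabla u(x,t)=0$ without ever using \eqref{SUPPE}. Under the reading that the paper itself adopts --- the structural constants $\xi,\eta,\kappa,\Gamma$, and hence the constant $C$ of Corollary~\ref{noqwmefef004}, are fixed independently of the cylinder --- your first two paragraphs already constitute a complete and arguably cleaner proof, and they expose that the growth assumption enters only if one allows the structural constants to vary with $\sup_{Q_{R,T}}u$. Your closing paragraph is the only weak spot: the assertion that \eqref{SUPPE} automatically controls an $M_{R,T}$-dependent choice of $\xi$ (and of $\eta,\Gamma$, hence of $C$) is not substantiated --- how these constants must degenerate with $\sup u$ depends on $F$, and nothing in \eqref{SUPPE} alone guarantees $\big(\xi-G(u)\big)\big(\tfrac1R+\tfrac1{\sqrt T}\big)\to0$ in that scenario --- but the paper performs no such bookkeeping either (its proof treats all structural constants as fixed and uses \eqref{SUPPE} only after the factor $\xi-G(u)$ has been traded for $Cu$), so this hedge does not amount to a gap relative to the paper's own argument.
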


\begin{proof} We exploit Corollary~\ref{noqwmefef004}.
For this, we observe that we can take~$k:=0$, owing to~\eqref{NONNERI}.
Furthermore, comparing~\eqref{MS DMDOMFNODKMSKR}
with~\eqref{DEFmu}, \eqref{DEFgamma}, \eqref{MUDUE},
\eqref{DEFMU22}, \eqref{GAMMA2}, \eqref{GAMMA32}
and~\eqref{DEFMU33}, we see that~$\mu=\gamma=0$.

For this reason, for every~$R>0$, $T>0$, $x\in B_{R/2}$ and~$t\in[-T/2,0]$
the use of Corollary~\ref{noqwmefef004} leads to
\begin{equation*}
G'(u(x,t))\,|\nabla u(x,t)|\leq C
\left(
\frac{1}{R}+\frac1{\sqrt{T}}
\right)\Big(\xi-G(u(x,t))\Big),\end{equation*}
for a suitable structural constant~$C$.

This and~\eqref{DEFG} give that
\begin{equation*}
|\nabla u(x,t)|\leq C
\left(
\frac{1}{R}+\frac1{\sqrt{T}}
\right)\,\frac{\big(\xi-G(u(x,t))\big) \,u(x,t)}{F'(u(x,t))}.\end{equation*}
Hence, in light of~\eqref{CONDFOD},
\begin{equation*}
|\nabla u(x,t)|\leq C
\left(
\frac{1}{R}+\frac1{\sqrt{T}}
\right)\,\frac{F'(u(x,t))}{|F''(u(x,t))|},\end{equation*}
and thus, by~\eqref{kappapi},
\begin{equation*}
|\nabla u(x,t)|\leq C
\left(
\frac{1}{R}+\frac1{\sqrt{T}}
\right)\, u(x,t),\end{equation*}
up to renaming~$C$ line after line.

Therefore, utilizing~\eqref{SUPPE}, for every~$x\in B_{R/2}$
and~$t\in [-R^2/2,0]$,
\begin{equation*}
|\nabla u(x,t)|\leq
\frac{C}{R}\;
\sup_{B_R\times[-R^2,0]} u=\frac{o(R)}R.\end{equation*}
Sending now~$R\to+\infty$ we obtain the desired result.
\end{proof}

With respect to~\eqref{SUPPE},
we do not address here the problem of the optimal
growth at infinity needed to obtain nontrivial solutions.
As a matter of fact, we do not expect condition~\eqref{SUPPE}
to be optimal (indeed, at least when~$a$ is independent of~$t$
and~$F$ is a suitable power, milder growth assumptions
lead to suitable classification results,
see e.g. Theorems~1.3 and~1.5 in~\cite{MR2853544}).

\end{appendix}

\vskip4mm
\par\noindent

\section*{Acknowledgments}
\vskip2mm
\noindent
Serena Dipierro and Enrico Valdinoci are members of INdAM and AustMS.
Serena Dipierro has been supported by
the Australian Research Council DECRA DE180100957
``PDEs, free boundaries and applications''.
Enrico Valdinoci has been supported by
the Australian Laureate Fellowship
FL190100081
``Minimal surfaces, free boundaries and partial differential equations''.

\begin{biblist}\begin{bibdiv}

\bib{MR524760}{article}{
   author={Aronson, Donald G.},
   author={B\'{e}nilan, Philippe},
   title={R\'{e}gularit\'{e} des solutions de l'\'{e}quation des milieux poreux dans
   ${\bf R}^{N}$},
   language={French, with English summary},
   journal={C. R. Acad. Sci. Paris S\'{e}r. A-B},
   volume={288},
   date={1979},
   number={2},
   pages={A103--A105},
   issn={0151-0509},
   review={\MR{524760}},
}

\bib{MR1219716}{article}{
   author={Auchmuty, Giles},
   author={Bao, David},
   title={Harnack-type inequalities for evolution equations},
   journal={Proc. Amer. Math. Soc.},
   volume={122},
   date={1994},
   number={1},
   pages={117--129},
   issn={0002-9939},
   review={\MR{1219716}},
   doi={10.2307/2160850},
}

\bib{MR1720778}{article}{
   author={Benachour, Said},
   author={Lauren\c{c}ot, Philippe},
   title={Global solutions to viscous Hamilton-Jacobi equations with
   irregular initial data},
   journal={Comm. Partial Differential Equations},
   volume={24},
   date={1999},
   number={11-12},
   pages={1999--2021},
   issn={0360-5302},
   review={\MR{1720778}},
   doi={10.1080/03605309908821492},
}

\bib{MR3735744}{article}{
   author={Bianchi, Davide},
   author={Setti, Alberto G.},
   title={Laplacian cut-offs, porous and fast diffusion on manifolds and
   other applications},
   journal={Calc. Var. Partial Differential Equations},
   volume={57},
   date={2018},
   number={1},
   pages={Paper No. 4, 33},
   issn={0944-2669},
   review={\MR{3735744}},
   doi={10.1007/s00526-017-1267-9},
}

\bib{MR2383484}{article}{
   author={Bonforte, Matteo},
   author={Grillo, Gabriele},
   author={Vazquez, Juan Luis},
   title={Fast diffusion flow on manifolds of nonpositive curvature},
   journal={J. Evol. Equ.},
   volume={8},
   date={2008},
   number={1},
   pages={99--128},
   issn={1424-3199},
   review={\MR{2383484}},
   doi={10.1007/s00028-007-0345-4},
}
	
\bib{MR3621772}{article}{
   author={Castorina, Daniele},
   author={Mantegazza, Carlo},
   title={Ancient solutions of semilinear heat equations on Riemannian
   manifolds},
   journal={Atti Accad. Naz. Lincei Rend. Lincei Mat. Appl.},
   volume={28},
   date={2017},
   number={1},
   pages={85--101},
   issn={1120-6330},
   review={\MR{3621772}},
   doi={10.4171/RLM/753},
}

\bib{12345}{article}{
   author={Castorina, Daniele},
   author={Mantegazza, Carlo},
   title={Ancient solutions of superlinear heat equations on
Riemannian manifolds},
   journal={Commun. Contemp. Math.},
   date={to appear},
}

\bib{2019arXiv190304569C}{article}{
  author={Cavaterra, Cecilia},
   author={Dipierro, Serena},
   author={Farina, Alberto},
   author={Gao, Zu},
   author={Valdinoci, Enrico},
   title={Pointwise gradient bounds for entire solutions of elliptic
   equations with non-standard growth conditions and general nonlinearities},
   journal={J. Differential Equations},
   volume={270},
   date={2021},
   pages={435--475},
   issn={0022-0396},
   review={\MR{4150380}},
   doi={10.1016/j.jde.2020.08.007},
}

\bib{ZU2}{article}{
       author ={Cavaterra, Cecilia},
     author ={Dipierro, Serena},
     author = {Gao, Zu},
     author ={Valdinoci, Enrico},
        title = {Global gradient estimates for a general type of nonlinear parabolic equations},
      journal = {arXiv e-prints},
date={2020},
eid = {arXiv:2006.00263},
        pages = {arXiv:2006.00263},
archivePrefix = {arXiv},
       eprint = {2006.00263},
 primaryClass = {math.AP},
       adsurl = {https://ui.adsabs.harvard.edu/abs/2020arXiv200600263C},
      adsnote = {Provided by the SAO/NASA Astrophysics Data System}
}

\bib{MR897437}{article}{
   author={Dal Passo, Roberta},
   author={Luckhaus, Stephan},
   title={A degenerate diffusion problem not in divergence form},
   journal={J. Differential Equations},
   volume={69},
   date={1987},
   number={1},
   pages={1--14},
   issn={0022-0396},
   review={\MR{897437}},
   doi={10.1016/0022-0396(87)90099-4},
}

\bib{MR2856180}{book}{
   author={Fourier, Jean Baptiste Joseph},
   title={Th\'{e}orie analytique de la chaleur},
   language={French},
   series={Cambridge Library Collection},
   note={Reprint of the 1822 original;
   Previously published by \'{E}ditions Jacques Gabay, Paris, 1988 [MR1414430]},
   publisher={Cambridge University Press, Cambridge},
   date={2009},
   pages={ii+xxii+643},
   isbn={978-1-108-00180-9},
   review={\MR{2856180}},
   doi={10.1017/CBO9780511693229},
}

\bib{MR0181836}{book}{
   author={Friedman, Avner},
   title={Partial differential equations of parabolic type},
   publisher={Prentice-Hall, Inc., Englewood Cliffs, N.J.},
   date={1964},
   pages={xiv+347},
   review={\MR{0181836}},
}

\bib{MR3427985}{article}{
   author={Grillo, Gabriele},
   author={Muratori, Matteo},
   title={Smoothing effects for the porous medium equation on
   Cartan-Hadamard manifolds},
   journal={Nonlinear Anal.},
   volume={131},
   date={2016},
   pages={346--362},
   issn={0362-546X},
   review={\MR{3427985}},
   doi={10.1016/j.na.2015.07.029},
}

\bib{MR3658720}{article}{
   author={Grillo, Gabriele},
   author={Muratori, Matteo},
   author={V\'{a}zquez, Juan Luis},
   title={The porous medium equation on Riemannian manifolds with negative
   curvature. The large-time behaviour},
   journal={Adv. Math.},
   volume={314},
   date={2017},
   pages={328--377},
   issn={0001-8708},
   review={\MR{3658720}},
   doi={10.1016/j.aim.2017.04.023},
}

\bib{MR1230276}{article}{
   author={Hamilton, Richard S.},
   title={A matrix Harnack estimate for the heat equation},
   journal={Comm. Anal. Geom.},
   volume={1},
   date={1993},
   number={1},
   pages={113--126},
   issn={1019-8385},
   review={\MR{1230276}},
   doi={10.4310/CAG.1993.v1.n1.a6},
}

\bib{MR797051}{article}{
   author={Herrero, Miguel A.},
   author={Pierre, Michel},
   title={The Cauchy problem for $u_t=\Delta u^m$ when $0<m<1$},
   journal={Trans. Amer. Math. Soc.},
   volume={291},
   date={1985},
   number={1},
   pages={145--158},
   issn={0002-9947},
   review={\MR{797051}},
   doi={10.2307/1999900},
}

\bib{MR1618694}{article}{
   author={Hsu, Elton P.},
   title={Estimates of derivatives of the heat kernel on a compact
   Riemannian manifold},
   journal={Proc. Amer. Math. Soc.},
   volume={127},
   date={1999},
   number={12},
   pages={3739--3744},
   issn={0002-9939},
   review={\MR{1618694}},
   doi={10.1090/S0002-9939-99-04967-9},
}

\bib{MR834612}{article}{
   author={Li, Peter},
   author={Yau, Shing-Tung},
   title={On the parabolic kernel of the Schr\"{o}dinger operator},
   journal={Acta Math.},
   volume={156},
   date={1986},
   number={3-4},
   pages={153--201},
   issn={0001-5962},
   review={\MR{834612}},
   doi={10.1007/BF02399203},
}

\bib{MR2487898}{article}{
   author={Lu, Peng},
   author={Ni, Lei},
   author={V\'{a}zquez, Juan-Luis},
   author={Villani, C\'{e}dric},
   title={Local Aronson-B\'{e}nilan estimates and entropy formulae for porous
   medium and fast diffusion equations on manifolds},
   language={English, with English and French summaries},
   journal={J. Math. Pures Appl. (9)},
   volume={91},
   date={2009},
   number={1},
   pages={1--19},
   issn={0021-7824},
   review={\MR{2487898}},
   doi={10.1016/j.matpur.2008.09.001},
}

\bib{MR2264255}{article}{
   author={Ma, Li},
   title={Gradient estimates for a simple elliptic equation on complete
   non-compact Riemannian manifolds},
   journal={J. Funct. Anal.},
   volume={241},
   date={2006},
   number={1},
   pages={374--382},
   issn={0022-1236},
   review={\MR{2264255}},
   doi={10.1016/j.jfa.2006.06.006},
}

\bib{MR1742035}{article}{
   author={Ma, Li},
   author={An, Yinglian},
   title={The maximum principle and the Yamabe flow},
   conference={
      title={Partial differential equations and their applications},
      address={Wuhan},
      date={1999},
   },
   book={
      publisher={World Sci. Publ., River Edge, NJ},
   },
   date={1999},
   pages={211--224},
   review={\MR{1742035}},
}

\bib{MR2392508}{article}{
   author={Ma, Li},
   author={Zhao, Lin},
   author={Song, Xianfa},
   title={Gradient estimate for the degenerate parabolic equation
   $u_t=\Delta F(u)+H(u)$ on manifolds},
   journal={J. Differential Equations},
   volume={244},
   date={2008},
   number={5},
   pages={1157--1177},
   issn={0022-0396},
   review={\MR{2392508}},
   doi={10.1016/j.jde.2007.08.014},
}

\bib{MR1431005}{article}{
   author={Malliavin, Paul},
   author={Stroock, Daniel W.},
   title={Short time behavior of the heat kernel and its logarithmic
   derivatives},
   journal={J. Differential Geom.},
   volume={44},
   date={1996},
   number={3},
   pages={550--570},
   issn={0022-040X},
   review={\MR{1431005}},
}
	
\bib{MR2285258}{article}{
   author={Souplet, Philippe},
   author={Zhang, Qi S.},
   title={Sharp gradient estimate and Yau's Liouville theorem for the heat
   equation on noncompact manifolds},
   journal={Bull. London Math. Soc.},
   volume={38},
   date={2006},
   number={6},
   pages={1045--1053},
   issn={0024-6093},
   review={\MR{2285258}},
   doi={10.1112/S0024609306018947},
}

\bib{MR1664888}{article}{
   author={Stroock, Daniel W.},
   author={Turetsky, James},
   title={Upper bounds on derivatives of the logarithm of the heat kernel},
   journal={Comm. Anal. Geom.},
   volume={6},
   date={1998},
   number={4},
   pages={669--685},
   issn={1019-8385},
   review={\MR{1664888}},
   doi={10.4310/CAG.1998.v6.n4.a2},
}

\bib{MR859613}{article}{
   author={Ughi, Maura},
   title={A degenerate parabolic equation modelling the spread of an
   epidemic},
   journal={Ann. Mat. Pura Appl. (4)},
   volume={143},
   date={1986},
   pages={385--400},
   issn={0003-4622},
   review={\MR{859613}},
   doi={10.1007/BF01769226},
}

\bib{MR2286292}{book}{
   author={V\'{a}zquez, Juan Luis},
   title={The porous medium equation},
   series={Oxford Mathematical Monographs},
   note={Mathematical theory},
   publisher={The Clarendon Press, Oxford University Press, Oxford},
   date={2007},
   pages={xxii+624},
   isbn={978-0-19-856903-9},
   isbn={0-19-856903-3},
   review={\MR{2286292}},
}

\bib{MR3633806}{article}{
   author={Wang, Wen},
   title={Harnack differential inequalities for the parabolic equation
   $u_t=\scr{L}F(u)$ on Riemannian manifolds and applications},
   journal={Acta Math. Sin. (Engl. Ser.)},
   volume={33},
   date={2017},
   number={5},
   pages={620--634},
   issn={1439-8516},
   review={\MR{3633806}},
   doi={10.1007/s10114-016-6260-2},
}

\bib{MR2853544}{article}{
   author={Xu, Xiangjin},
   title={Gradient estimates for $u_t=\Delta F(u)$ on manifolds and some
   Liouville-type theorems},
   journal={J. Differential Equations},
   volume={252},
   date={2012},
   number={2},
   pages={1403--1420},
   issn={0022-0396},
   review={\MR{2853544}},
   doi={10.1016/j.jde.2011.08.004},
}

\bib{MR2425752}{article}{
   author={Yang, Yunyan},
   title={Gradient estimates for a nonlinear parabolic equation on
   Riemannian manifolds},
   journal={Proc. Amer. Math. Soc.},
   volume={136},
   date={2008},
   number={11},
   pages={4095--4102},
   issn={0002-9939},
   review={\MR{2425752}},
   doi={10.1090/S0002-9939-08-09398-2},
}

\bib{MR2763753}{article}{
   author={Zhu, Xiaobao},
   title={Hamilton's gradient estimates and Liouville theorems for fast
   diffusion equations on noncompact Riemannian manifolds},
   journal={Proc. Amer. Math. Soc.},
   volume={139},
   date={2011},
   number={5},
   pages={1637--1644},
   issn={0002-9939},
   review={\MR{2763753}},
   doi={10.1090/S0002-9939-2010-10824-9},
}

\bib{MR3023250}{article}{
   author={Zhu, Xiaobao},
   title={Hamilton's gradient estimates and Liouville theorems for porous
   medium equations on noncompact Riemannian manifolds},
   journal={J. Math. Anal. Appl.},
   volume={402},
   date={2013},
   number={1},
   pages={201--206},
   issn={0022-247X},
   review={\MR{3023250}},
   doi={10.1016/j.jmaa.2013.01.018},
}

\end{bibdiv}\end{biblist}

\end{document}